\renewcommand{\mathbb}{\mathds}
\DeclareMathAlphabet{\mathsc}{U}{rsfs}{m}{n}
\renewcommand{\mathcal}{\mathsc}
\theoremstyle{definition}
\newtheorem{ntn}{Notation}[section]
\newtheorem{dfn}[ntn]{Definition}
\theoremstyle{plain}
\newtheorem{lem}[ntn]{Lemma}
\newtheorem{prp}[ntn]{Proposition}
\newtheorem{thm}[ntn]{Theorem}
\newtheorem{cor}[ntn]{Corollary}
\newtheorem{conv}[ntn]{Convention}
\theoremstyle{remark}
\newtheorem{prb}[ntn]{Problem}
\newtheorem{rmk}[ntn]{Remark}
\newtheorem{exa}[ntn]{Example}
\newcommand{\A}{{\mathcal A}}
\newcommand{\calA}{{\mathcal A}}
\newcommand{\calB}{{\mathcal B}}
\newcommand{\calD}{{\mathcal D}}
\newcommand{\calExt}{{\mathcal Ext}}
\newcommand{\calI}{{\mathcal I}}
\newcommand{\calJ}{{\mathcal J}}
\renewcommand{\AA}{{\mathbb A}}
\newcommand{\CC}{{\mathbb C}} 
\newcommand{\Z}{{\mathbb Z}}
\newcommand{\PP}{{\mathbb P}}
\newcommand{\KK}{{\mathbb K}}
\newcommand{\NN}{{\mathbb N}}
\newcommand{\calF}{{\mathcal F}}
\newcommand{\calO}{{\mathcal O}}
\newcommand{\RR}{{\mathbb R}}
\newcommand{\p}{{\mathfrak p}}
\newcommand{\del}{\partial}
\newcommand{\set}[1]{\{#1\}}
\newcommand{\abs}[1]{{\left|#1\right|}}
\newcommand{\xymat}{\SelectTips{cm}{}\xymatrix}
\newcommand{\into}{\hookrightarrow}
\newcommand{\onto}{\twoheadrightarrow}
\renewcommand{\to}{\longrightarrow}
\newcommand{\diff}{\,{\mathrm d}}
\newcommand{\ideal}[1]{{\left\langle#1\right\rangle}}
\newcommand{\minus}{\smallsetminus}
\def\bsx{\boldsymbol{x}}
\def\bsc{\boldsymbol{c}}
\DeclareMathOperator{\Der}{Der}
\DeclareMathOperator{\Ext}{Ext}
\DeclareMathOperator{\Hom}{Hom}
\DeclareMathOperator{\supp}{supp}
\DeclareMathOperator{\id}{id}
\DeclareMathOperator{\chr}{char}
\DeclareMathOperator{\codim}{codim}
\DeclareMathOperator{\pd}{pdim}
\DeclareMathOperator{\depth}{depth}
\DeclareMathOperator{\Sing}{Sing}
\DeclareMathOperator{\Syz}{Syz}
\DeclareMathOperator{\Var}{Var}
\DeclareMathOperator{\lcm}{lcm}
\begin{document}

\title[Local cohomology of logarithmic forms]{
Local cohomology of logarithmic forms}

\authors{
\author[G.~Denham]{Graham Denham}
\address{Department of Mathematics, University of Western Ontario, London, Ontario, Canada  N6A 5B7}
\email{\href{mailto:gdenham@uwo.ca}{gdenham@uwo.ca}}
\urladdr{\url{http://www.math.uwo.ca/~gdenham}}
\thanks{G.D.~supported by NSERC}

\author[H.~Schenck]{Hal Schenck}
\address{Department of Mathematics, University of Illinois, Urbana, IL 61801}
\email{\href{mailto:schenck@math.uiuc.edu}{schenck@math.uiuc.edu}}
\urladdr{\url{http://www.math.uiuc.edu/~schenck}}
\thanks{H.S.~supported by NSA H98230-11-1-0170, NSF DMS-1068754}

\author[M.~Schulze]{Mathias Schulze}
\address{Department of Mathematics, University of Kaiserslautern, 67663 Kaiserslautern, Germany}
\email{\href{mailto:mschulze@mathematik.uni-kl.de}{mschulze@mathematik.uni-kl.de}}
\urladdr{\url{http://www.mathematik.uni-kl.de/~mschulze}}
\thanks{}

\author[M.~Wakefield]{Max Wakefield}
\address{Department of Mathematics, United States Naval Academy, Annapolis, MD 21402}
\email{\href{mailto:wakefiel@usna.edu}{wakefiel@usna.edu}}
\urladdr{\url{http://www.usna.edu/Users/math/wakefiel}}
\thanks{M.W.~has been partially supported by the Office of Naval Research.}

\author[U.~Walther]{Uli Walther}
\address{Department of Mathematics, Purdue University, West Lafayette, IN 47907}
\email{\href{mailto:walther@math.purdue.edu}{walther@math.purdue.edu}}
\urladdr{\url{http://www.math.purdue.edu/~walther}}
\thanks{U.W.~supported by NSF grant DMS-0901123}
}
\date{\today}

\begin{abstract}
Let $Y$ be a divisor on a smooth algebraic variety $X$.  We investigate
the geometry of the Jacobian scheme of $Y$, 
homological invariants derived from logarithmic differential forms
along $Y$,
 and their relationship with 
the property that $Y$ be a free divisor. 

We consider arrangements of
hyperplanes as a source of examples and counterexamples.  In particular, we 
make a complete calculation of the local cohomology of logarithmic forms 
of generic hyperplane arrangements.
\end{abstract}

\subjclass{32S22, 52C35, 16W25}

\keywords{hyperplane, arrangement, logarithmic, differential form, free}

\maketitle
\tableofcontents

\numberwithin{equation}{section}

\section{Introduction}

Positioned in the singularity hierarchy somewhat opposite to a isolated singularity, a free divisor is a reduced hypersurface with large but well-behaved singular locus. 
More precisely, freeness is equivalent to the scheme defined by the Jacobian ideal being empty or Cohen--Macaulay of minimal possible codimension. 
Free divisors occur naturally in deformation theory, as discriminants in
base spaces of versal deformations. 
The classical case, studied by K.~Saito~\cite{Sai80}, is that of a versal deformation of an isolated singularity. 
Later, Looijenga~\cite{Loo84} expanded Saito's ideas to isolated complete intersection singularities; following Bruce~\cite{Bru84} and
Zakalyukin~\cite{Zak83}, Looijenga's results \cite[Cor.~6.13]{Loo84} prove freeness of discriminants of stable mappings $f\colon(\CC^n,0)\to(\CC^p,0)$, $n\ge p$.
For versal deformations of space curves in $3$-space, van~Straten~\cite{vSt95} proved freeness of the reduced discriminant.
Terao~\cite{Ter83} showed that the discriminant of any finite map is free.
Apart from discriminants, there are other natural sources of free divisors in this setup, see for instance \cite{MS10}.

Via deep results of Brieskorn~\cite{Bri71} and Slodowy~\cite{Slo80},
free divisors are linked to representation theory: the discriminants
of ADE singularities are discriminants of finite Coxeter groups with
the same name. 
It turns out that all Coxeter arrangements (even unitary reflection arrangements) as well as their discriminants are free (see \cite{Ter80}).  This led to the study of general free arrangements, and finally to Terao's conjecture, stating that freeness of an arrangement is a combinatorial property. 
It is one of the most prominent open conjectures in the field, and motivated the results in this article.

While reflection groups are discrete, more recently also free divisors
associated with (reductive) algebraic groups have been studied (see
\cite{GMS09,GMNS09,Sev09,dGMS09,GS10}). 
For example, the free divisors associated with a semisimple group are
exactly the free discriminants in Sato--Kimura's classification
of irreducible prehomogeneous vector spaces, and there are exactly four of them up to castling transformations.

There is also a purely ring theoretic version of Saito's theory of free divisors due to Simis~\cite{Sim06}.

In this article, we study homological invariants that stand in the way
of freeness. Our motivation comes from the study of hyperplane
arrangements, but many results are true for more general divisors.

\subsection{Logarithmic forms and vector fields}

Throughout this paper $X$ will be an $\ell$-dimensional smooth algebraic variety
over an algebraically closed field $\KK$.  We shall also be concerned
with affine $\ell$-space 
\[
V=\AA_\KK^\ell
\]
over an arbitrary field $\KK$.  In
both cases, the sheaf $\Omega^1_X$ of differentials on $X$ is locally
free.  By $\calO=\calO_X$ we denote the sheaf of regular functions on
$X$.  We shall freely identify any quasi-coherent sheaf on an affine
scheme with its module of global sections.  In case $X=V$, we pick
coordinates $\bsx=x_1,\ldots,x_\ell$ on $V$, and denote by
\[
S=\KK[V]=\KK[\bsx]
\]
the coordinate ring of $V$.  We shall reserve the symbol 
$R$ to denote arbitrary regular rings.  

In general, choosing a regular system of parameters
$c_1,\ldots,c_\ell$ near $x\in X$ induces an \'etale map $\bsc\colon U
\to \AA^\ell_\KK$ from any open set $U$ on which the differentials of
the $c_i$ are linearly independent (see for example
\cite[VI.1.3]{Bor87}).  By construction, $\bsc$  is the
pullback of $\bsx$ on $\AA^\ell_\KK$ to $U$, and we call it a
\emph{local coordinate system} on $U\subset X$ near $X$; since the map
$\bsc$ is \'etale, the partial differentiation operators $\del_i$ on
$\AA^\ell_\KK$ lift to vector fields $\eta_i$ near $x$ that satisfy
$[\eta_i,c_j]=\delta_{i,j}$. On an analytic (but generally not on
any algebraic) small neighborhood of $x$, $\bsc$ can be made injective.
When we choose $x\in X$ and suitable $U,\bsc,\eta$ in the above
sense, we shall abuse notation and denote $\eta$ by $\del$ and $\bsc$
by $\bsx$.

Let $\Der_X$ be the locally free $\calO$-module of vector fields on
$X$; for $X=V$, its global sections form the module of $\KK$-linear
derivations $\bigoplus_{i=1}^\ell S\cdot\frac{\del}{\del x_i}$ on $S$.
Let $\Omega^\bullet_X$ be the algebraic de Rham complex on $X$; if
$X=V$, then $\Omega^1_X$ is the module with global sections
$\bigoplus_{i=1}^\ell S\diff x_i$. In general,
$\Omega^p_X=\bigwedge^p\Omega^1_X$ is a locally free $\calO_X$-module but
the maps in $\Omega^\bullet_X$ are not $\calO_X$-linear.

Let 
\[
\iota\colon Y\into X
\]
be a reduced divisor on $X$ and denote by $f$ a reduced local defining
equation.  Recall that $\calO_X(kY)$ is the locally free rank-$1$
$\calO_X$-module, locally defined as $\frac1{f^k}\calO_X$. With
$\calO_X(Y)=\calO_X(1Y)$, for any sheaf $\calF$ on $X$, let
$\calF(Y)=\calF\otimes_{\calO}\calO_X(Y)$.  On the other hand
$\calO_X(*Y)$ is by definition 
$j_*\calO_{X\smallsetminus Y}$, where $j\colon X-Y\to X$ is the open embedding.
Our main focus is on the following two objects derived from $Y$:

\begin{dfn}
The sheaf of \emph{logarithmic vector fields} along $Y$ is the $\calO$-module
\[
\Der_X(-\log Y)=\{\theta\in\Der_X\mid\theta(\calO(-Y))\subseteq\calO(-Y)\}.
\]
The $\calO$-module complex $\Omega^\bullet_X(\log Y)$ of
\emph{logarithmic differential forms} along $Y$ is the largest
subcomplex of $\Omega^\bullet_X(*Y)$ contained in $\Omega^\bullet_X(Y)$:
\[
\Omega^p_X(\log Y)=\{\omega\in\Omega^p(Y)\mid\diff\omega\in\Omega^{p+1}(Y)\}.
\]
Geometrically, vector fields in $\Der(-\log Y)$ are tangent to the smooth locus of $Y$.  
\end{dfn}

\begin{conv}
If $X$ is understood from the context, we suppress it in the
subscripts in $\calO_X$, $\Omega^p_X$, $\Der_X(-\log Y)$ and
$\Omega^p_X(\log Y)$.
\end{conv}

Obviously, $\Omega^p(\log Y)\not=0$ only for $0\le p\le \ell$, and one has
$\Omega^0(\log Y)=\calO$ and $\Omega^\ell(\log Y)=\Omega^\ell(Y)$. If
$X=V$ then $\Omega^\ell(\log Y)=\frac1fS\diff\bsx$ where $\diff\bsx=\diff
x_1\wedge\ldots\wedge\diff x_\ell$.

In \cite{DS10}, an alternative definition is used for $\Der(-\log Y)$
and $\Omega^\bullet_V(\log Y)$. Our definition agrees with theirs in
the arrangement case, but works more generally.  By \cite[\S2]{DS10},
$\Der(-\log Y)$ and $\Omega^\bullet_V(\log Y)$ are reflexive and hence
$Y$-normal in the sense of \cite{DS10}. 
This implies that certain properties of $Y$, obviously valid at all smooth
points, are retained at the singular points of $Y$.  For instance:
\begin{enumerate}
\item $\Omega^\bullet(\log Y)$ is stable under contraction against elements of $\Der(-\log Y)$.
\item Contraction sets up a perfect pairing
\[
\Der(-\log Y)\times\Omega^1(\log Y)\to\calO(Y)
\]
and an identification
\[
\Der(-\log Y)=\Omega^{\ell-1}(\log Y);
\]
for $X=V$, these are defined by $(\sum_i\eta_i\,\diff x_i,\sum_i\theta_i\,\frac{\del}{\del x_i})\mapsto\sum_i\theta_i\eta_i$ and $\sum_i\eta_i\frac{\del}{\del x_i}\leftrightarrow\sum_i(-1)^i\frac{\eta_i}{f}\diff x_1\wedge\ldots\wedge\widehat{\diff x_i}\wedge\ldots\wedge\diff x_\ell$ respectively.
\item The exterior product induces a perfect pairing
\begin{equation}\label{omega-products}
\Omega^p(\log Y)\times \Omega^{\ell-p}(\log Y)\to \Omega^\ell(\log Y).
\end{equation}
\item \label{rem:product}
The natural map $
j_p\colon \bigwedge^p\Omega^1(\log Y)\to \Omega^p(\log Y)
$ is injective.
\end{enumerate}

\begin{dfn}
A \emph{free divisor} is a divisor $Y$ for which $\Omega^1(\log Y)$ is
a free module.  As it turns out, for free $Y$, the modules $\Der(-\log
Y)$ and $\Omega^p(\log Y)$, $0\le p\le \ell$, are all free as well and
$\Omega^p(\log Y)=\bigwedge^p\Omega^1(\log Y)$.  The \emph{free locus}
of the divisor $Y$ is the set of points $x\in Y$ where $\Omega^1(\log
Y)$ is a locally free $\calO$-module; it includes the complement of
the singular locus of $Y$ in $X$.
\end{dfn}

\subsection{Euler homogeneity and Jacobians}\label{sec:EH}

Let $U\subseteq X$ be an affine open subset and choose a local defining equation $f$, defined up to units, of $U\cap Y$ relative to some local coordinate system $\bsx$ on $U$. 
While the particular choice of the coordinate system is relevant, we hide its presence in the symbol ``$U$''.

We denote
\[
\calJ_{U,f}=\calO_U\langle\frac{\del f}{\del x_1},\dots,\frac{\del f}{\del
    x_\ell}\rangle
\]
the $\calO_U$-ideal generated by the partial derivatives of $f$. 
Note that $\calJ_{U,f}$ varies with $f$ even if $U$, $Y$ and $\bsx$ are fixed. 
(Example: $f_1=x-1$ and $f_2=x^2-1$ on $\CC^1\smallsetminus\Var(x+1)$).
The $1$-st Fitting ideal of $\Omega^1_{U\cap Y}$ is 
\[
\calJ_{U,Y}=\calJ_{U,f}\cdot\calO_{U\cap Y}.
\] 
In contrast to $\calJ_{U,f}$, $\calJ_{U,Y}$ only depends on $U$ and $Y$, and the various $J_{U,Y}$ patch to an ideal sheaf $\calJ_Y$ on $Y$. 
We call the subscheme
\[
Z=\Var(\calJ_Y)\subseteq Y
\]
with structure sheaf $\calO_Y/\calJ_Y$ the \emph{Jacobian scheme} of $Y$.
We shall use $\calI_{X,Z}$ for the preimage ideal sheaf of $\calJ_Y$
under the natural projection 
\[
\iota^\#\colon\calO_X\onto\iota_*\calO_Y.
\]

Write $\Syz \calJ_{U,f}$ for the syzygy sheaf of $\calJ_{U,f}$. We
freely use the obvious identification between this 
sheaf with the vector fields on $U$ that annihilate
$f$.  Geometrically, its elements are vector
fields tangent to the smooth part of all level sets of $f$.  
Like $\calJ_{U,f}$, $\Syz \calJ_{U,f}$ varies with $f$, even if
$U$ and $\bsx$ are fixed.  For an
arrangement $Y=\A$ in $X=V$, this definition agrees with
$D^{(1,\dots,1)}(\A)$ from \cite{DS10}: we develop this further in
\S\ref{ss:arr}.

There is a commutative diagram in the category of $\calO_X$-modules
\begin{equation}\label{eq:logflogY}
\xymat{
0\ar[r]&\Der_{U}(-\log Y)\ar[r]&\Der_U\ar[r]^-{\bullet(f)}&\iota_*(\calO_{Y\cap
  U})\ar[r]&\iota_*(\calO_{Y\cap U}/\calJ_{U,Y})\ar[r]&0\\
0\ar[r]&\Syz\calJ_{U,f}\ar[r]\ar@{^(->}[u]&\Der_U\ar[r]^-{\bullet(f)}\ar@{=}[u]&\calO_U\ar[r]\ar@{->>}[u]&\calO_U/\calJ_{U,f}\ar[r]\ar@{->>}[u]&0\\
}
\end{equation}
with exact rows. Here, $\bullet(f)$ denotes application to $f$.  

\begin{dfn}\label{dfn-Euler}
We say that $Y$ admits an \emph{Euler vector field} $\chi\in\Der_U$ on
the open set $U\subseteq X$ if $\chi(f)=f$ for some local
reduced defining equation $f\in\calO_X(U)$ of $Y\cap U$.
If $Y$ can be covered by such open sets $U$, we call $Y$ \emph{Euler homogeneous}.
\end{dfn}

Let us suppose now that $Y$ admits a global Euler vector field $\chi$ on $X$. 
This has two important consequences. 
Firstly, for any local defining equation $f$ on any open affine $U$, $f$ is an element of
$\calJ_{U,f}$.
It follows that $\calJ_{U,f}=\calI_{X,Z}$ depends only on $U$ and $Y$
but not on the choice of $f\in\calO_X(U)$ or the coordinate system
$\bsx$; in particular, 
one can extend diagram \eqref{eq:logflogY} to all of $X$. 
Secondly, the map
\[
\Der_U(-\log Y)\ni\delta\mapsto
\delta-\frac{\delta(f)}{f}\cdot\chi\in\Syz\calJ_{U,f} 
\]
defines a non-canonical local splitting of the vertical inclusion in
\eqref{eq:logflogY}.  Thus, there is a split exact sequence of sheaves
\begin{equation}\label{eq:split}
\xymat{
0\ar[r] & \Syz\calJ_{U,f}\ar[r] & \Der_U(-\log Y)\ar[r] & \calO_U\cdot\chi\ar[r] & 0.
}
\end{equation}
For an arrangement $Y=\A$ in $X=V$, $\Der(-\log Y)/\calO\cdot\chi=D_0(\A)$ in the notation of \cite{DS10}.

\subsection{Arrangements}\label{ss:arr}

We introduce the family of divisors that we are most interested in
and refer to \cite{OTbook} for further details.

\begin{ntn}
By an \emph{arrangement} we mean a finite union $\calA$ of hyperplanes in
affine space. 
Throughout, we assume that arrangements are
\emph{central}, involving $n$ hyperplanes in $V$ passing through the origin. 
It is easy to see that $D(\calA)$ and $\Omega^p(\calA)$ are graded modules in this case. 
Abusing notation, we view $\calA$ both as a divisor $Y=\bigcup_{H\in\calA}H$, and as a list of hyperplanes.

For $H\in\calA$, let $\alpha_H\in V^*$ be a defining linear form.
Then  
\[
f=\prod_{H\in\calA}\alpha_H
\]
is a defining equation of $\A$ on $V$, and we call
\[
J_\A=\ideal{\frac{\del f}{\del x_1},\dots,\frac{\del f}{\del x_\ell}}\subseteq S
\]
the \emph{Jacobian ideal} of $\A$.  In the case where
$Y=\calA$ is an arrangement, in order to streamline the notation and
following an established convention, we write $D(\calA)$ for $\Der(-\log
Y)$, and $\Omega^p(\calA)$ for $\Omega^p(\log Y)$.
\end{ntn}

\begin{rmk}\label{rmk:projA}
Since  $\A$ is a cone over the origin, we can consider its image $\PP\A$
in $\PP^{\ell-1}_\KK$. 
By choosing a hyperplane $H\in\A$, we choose an affine chart in $\PP^{\ell-1}_\KK$
given by $\alpha_H\neq0$. 
The restriction of $\PP\A$ to such a chart is the noncentral arrangement of hyperplanes in $\AA^{\ell-1}$
given by dehomogenizing $f$ relative to $H$.  By reversing this
construction, many questions regarding general hyperplane arrangements can be
reduced to the case of central arrangements (see, e.g., \cite[\S3.2]{OTbook}).
\end{rmk}

Recall that a flat $F$ of an arrangement $\A$ is an intersection of 
hyperplanes.  The intersection
lattice $L(\A)$ is the set of all flats, partially ordered by reverse
inclusion.

\begin{prp}\label{prp:arrEuler}
Let $p=\chr\KK$.
A hyperplane arrangement $\A$ is an Euler-homogeneous divisor provided
that $p$ does not divide
\begin{equation}\label{eq:goodp}
\lcm_{F\in L(\A)}\abs{\set{H\in\A\colon H\leq F}}.
\end{equation}
\end{prp}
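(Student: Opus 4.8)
The plan is to verify Euler homogeneity \emph{pointwise}: for each $x\in Y$ I will exhibit an open set $U\ni x$ and a regular vector field $\chi$ on $U$ with $\chi(f)=f$ for a reduced local equation $f$, and arrange that these sets cover $Y$. The tool is the Euler vector field \emph{centred at $x$}: writing $x=(a_1,\dots,a_\ell)$, set
\[
\chi_x=\sum_{i=1}^\ell (x_i-a_i)\frac{\del}{\del x_i}\in\Der_V,
\]
a degree-one, hence regular, vector field. Its key property is its action on linear forms vanishing at $x$: since each $\alpha_H$ is linear and central, $\chi_x(\alpha_H)=\alpha_H-\alpha_H(a)$, so $\chi_x(\alpha_H)=\alpha_H$ precisely when $\alpha_H(a)=0$, i.e.\ when $x\in H$.

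Next I isolate the local equation of $\A$ near $x$. Let $\A_x=\set{H\in\A\colon x\in H}$ and consider $U_x=V\minus\bigcup_{H\notin\A_x}H$, an open set containing $x$. For $H\notin\A_x$ the form $\alpha_H$ is a unit on $U_x$, so those hyperplanes contribute only unit factors and may be dropped; thus $f_x=\prod_{H\in\A_x}\alpha_H$ is a reduced local defining equation of $Y\cap U_x$. Applying the Leibniz rule together with the computation above gives
\[
\chi_x(f_x)=\sum_{H\in\A_x}\Bigl(\prod_{H'\neq H}\alpha_{H'}\Bigr)\chi_x(\alpha_H)=\abs{\A_x}\cdot f_x.
\]
Hence, as soon as $\abs{\A_x}$ is invertible in $\KK$, the vector field $\tfrac{1}{\abs{\A_x}}\chi_x\in\Der_{U_x}$ is an Euler vector field for $\A$ on $U_x$ in the sense of Definition~\ref{dfn-Euler}.

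It remains to match the invertibility condition with \eqref{eq:goodp}. Each $x$ lies in a unique minimal flat $F_x=\bigcap_{H\ni x}H\in L(\A)$, and since $x\in F_x$ one checks that a hyperplane contains $F_x$ if and only if it passes through $x$; therefore $\abs{\A_x}=\abs{\set{H\in\A\colon H\le F_x}}$. For $x\in Y$ the flat $F_x$ is proper, so this cardinality is one of the numbers whose $\lcm$ appears in \eqref{eq:goodp}. As $p$ is prime, $p\nmid\lcm$ forces $p\nmid\abs{\set{H\le F}}$ for every flat $F$, and in particular $p\nmid\abs{\A_x}$, so $\abs{\A_x}\in\KK^\times$. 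Consequently every $x\in Y$ has a neighbourhood $U_x$ carrying an Euler vector field, the $U_x$ cover $Y$, and $\A$ is Euler homogeneous.

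I do not expect a serious obstacle here: the vector-field computation is routine and the geometry is elementary. The two points needing care are (i) the reduction to the local equation $f_x$, i.e.\ verifying that the hyperplanes missing $U_x$ really do contribute only invertible factors, so that a translated Euler field rather than the global one is forced; and (ii) the bookkeeping that converts the stratum-by-stratum cardinalities $\abs{\A_x}$ into the single divisibility hypothesis via the correspondence $x\mapsto F_x$ between points of $Y$ and their minimal flats.
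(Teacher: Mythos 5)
Your proposal is correct and takes essentially the same approach as the paper: localize at $x$, note that the local defining equation is the product of the $k=\abs{\A_x}$ linear forms vanishing at $x$ (the remaining factors being units on $U_x$), and rescale a degree-one Euler-type vector field by $1/k$, with $p\nmid k$ supplied by the lcm hypothesis via the minimal flat $F_x$. The only difference is cosmetic: you center the Euler field at $x$, while the paper uses $\frac1k\sum_{i=1}^\ell x_i\,\del/\del x_i$ centered at the origin, which works equally well since centrality makes the local equation homogeneous of degree $k$; your translated field has the minor added benefit of vanishing at $x$, i.e.\ it establishes strong Euler homogeneity.
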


\begin{proof}
For a point $x\in \A$, let $F_x$ be the set-theoretically smallest flat
containing $x$.  Let $k$ be the number of hyperplanes containing $F_x$.
The divisor is defined locally by a homogeneous
polynomial of degree $k$, so the derivation $1/k\sum_{i=1}^\ell x_i 
\partial/\partial x_i$ is an Euler vector field in the neighborhood of
$x$, provided that $p\nmid k$.
\end{proof}

One then naturally arrives at:

\begin{dfn}\label{dfn:goodp}
If $\A$ is an arrangement over a field $\KK$, say $p=\chr \KK$ is {\em good} for $\A$ if $p$ does not divide the expression \eqref{eq:goodp}.
\end{dfn}

\begin{dfn}
If the complement in $X$ of the free locus of $Y$ is zero-dimensional,
we say that \emph{$Y$ is free outside points}. If $X=\AA^\ell_\KK$,
and if 
$Y$ is quasi-homogeneous and free outside points then the origin is
the only non-free point of $Y$ and we then call $Y$ \emph{free outside $0$}.
\end{dfn}

\begin{rmk}
If $Y$ is a central hyperplane arrangement in affine space, then the
traditional term for $Y$ being free outside points is ``locally
free''. 
This terminology is traditional in arrangement theory, and is 
rooted in Remark~\ref{rmk:projA}: 
for any homogeneous $Y\subset V$, the sheaf $\Der(-\log\PP Y)$ on $\PP^{\ell-1}_\KK$ is locally free if and only if $Y$ is free outside the origin of $\AA^\ell_\KK$ in the above sense.
\end{rmk}

In Section~\ref{sec-logforms}, we investigate general homological
properties of $\Omega^\bullet(\log Y)$.  One natural such property, a
relaxation of freeness and inspired by the arrangement case, is that
of tameness:

\begin{dfn}\label{df:tame}
The divisor $Y\subset X$ is called \emph{tame} if $\pd_{\calO_{X,x}}\Omega^p(\log Y)\le p$ for all $x\in X$ and all $0\le p\le \ell$. 
\end{dfn}

Many interesting families of hyperplane arrangements are tame; these
include all arrangements in $\AA^3$; generic arrangements;
supersolvable and reflection arrangements (the last two of which are,
in fact, free).  The tame hypothesis appears explicitly first in \cite{RT91} and
frequently since.  In particular, tame arrangements satisfy a
Logarithmic Comparison Theorem~\cite{WiYu97}.  More discussion and
another application is found in \cite{CDFV09}.

We introduce in Section~\ref{sec-wakefield} a weaker version of
tameness and, with the general results in Section~\ref{sec-logforms},
state a criterion that forces an Euler-homogeneous divisor $Y$ to be
free provided it is free outside points and has a certain tameness property. 

In Section~\ref{sec-generic}, we investigate the case of a generic central arrangement $\calA$ and determine formulas describing the Hilbert function of the modules $\Ext^i_S(\Omega^p(\calA),S)$ for any $p$ and $i$. 

Our hope is that this article serves to trigger more studies on the interplay of homological properties of the logarithmic $p$-forms of arrangements and their combinatorics. 

\section{Embedded primes of the Jacobian ideal}\label{sec-wakefield}

Our hypotheses on $Y$, when combined with the Jacobian criterion for smoothness, imply that $\calI_{X,Z}$ has codimension at least $2$.
If the divisor $Y$ is not smooth, the upper row of display
\eqref{eq:logflogY} shows that $\Der(-\log Y)$ is a vector bundle if
and only if $\calI_{X,Z}$ is a Cohen--Macaulay ideal of codimension two.  For
this to happen, $\calI_{X,Z}$ must not have embedded primes.  In this
section, we study the converse of this implication for $Y$ that is
free outside points under two additional hypotheses: Euler
homogeneity, and a weakened form of tameness.

\subsection{The algebraic case}

For an Euler homogeneous divisor $Y$, we obtain from \eqref{eq:logflogY} the following identifications if $i > 0$:
\begin{equation}\label{eq:Ext-shift}
\calExt^i_\calO(\Der(-\log Y),\calO)\cong
\calExt^i_\calO(\Syz\calJ_{U,f},\calO)\cong
\calExt^{i+2}_\calO(\calO_Z,\calO).
\end{equation}
\begin{lem}\label{lem:jac}
The following are equivalent:
\pushQED{\qed}
\begin{enumerate}
\item $Z$ is of pure codimension $2$ in $X$.
\item $\codim_X\supp\calExt^p_\calO(\calO_Z,\calO)>p$, for $p\geq3$.
\end{enumerate}
If $Y$ is Euler homogeneous then the above conditions are equivalent to:
\begin{enumerate}[(3)]
\item $\codim_X\supp\calExt^p_\calO(\Der(-\log Y),\calO)>p+2$, for $p\geq 1$.\qedhere
\end{enumerate}
\end{lem}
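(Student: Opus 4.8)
The plan is to establish the two equivalences (1) $\Leftrightarrow$ (2) and (2) $\Leftrightarrow$ (3) separately, with essentially all of the content residing in the first; the second is a formal consequence of the index shift already recorded in \eqref{eq:Ext-shift}.

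For (1) $\Leftrightarrow$ (2) I would work stalk-by-stalk, since both $\codim_X\supp$ and $\calAss$ of a coherent sheaf are detected after localizing at each point of $X$, where $\calO_X$ is regular local because $X$ is smooth. Write $M=\iota_*\calO_Z$. First I would recall the standard bound $\codim_X\supp\calExt^p_\calO(M,\calO)\ge p$: localizing at a prime $\p$ gives $\calExt^p_\calO(M,\calO)_\p=\Ext^p_{\calO_{X,\p}}(M_\p,\calO_{X,\p})$, which can be nonzero only when $p\le\pd_{\calO_{X,\p}}M_\p\le\dim\calO_{X,\p}=\operatorname{ht}\p$. The key step is the dual characterization of associated primes: for $\p$ of height $h$, local duality over the regular, hence Gorenstein, ring $\calO_{X,\p}$ yields $H^0_{\p}(M_\p)^\vee\cong\Ext^h_{\calO_{X,\p}}(M_\p,\calO_{X,\p})$, and since the Matlis dual is faithful this gives
\[
\p\in\calAss(M)\iff\depth_{\calO_{X,\p}}M_\p=0\iff\p\in\supp\calExt^h_\calO(M,\calO).
\]

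With this in hand the argument is bookkeeping. The inequality $\codim_X\supp\calExt^p_\calO(M,\calO)>p$ forbids, by the standard bound, any prime of height strictly below $p$ in the support, while by the displayed equivalence the height-$p$ primes of $\supp\calExt^p_\calO(M,\calO)$ are exactly the height-$p$ elements of $\calAss(M)$. Hence condition (2) holds for all $p\ge3$ if and only if $M$ has no associated prime of height $\ge3$. Since the Jacobian criterion already forces $\codim_X Z\ge2$, every associated prime of $M$ has height $\ge2$; combining the two, (2) is equivalent to every associated prime of $M$ having height exactly $2$, which is precisely the unmixedness expressed by (1). Finally, (2) $\Leftrightarrow$ (3) is immediate from \eqref{eq:Ext-shift}: for $p\ge1$ that display identifies $\calExt^p_\calO(\Der(-\log Y),\calO)$ with $\calExt^{p+2}_\calO(\calO_Z,\calO)$, so condition (3) at index $p\ge1$ is condition (2) at index $p+2\ge3$ after reindexing.

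The main obstacle is the local-duality step, where the regularity of $\calO_X$ (and the Gorenstein property of its localizations) is used to convert the depth-zero, i.e.\ associated-prime, condition into nonvanishing of the top Ext module; once that equivalence is in place, the remaining work is only the comparison of codimensions together with the degree shift supplied by \eqref{eq:Ext-shift}.
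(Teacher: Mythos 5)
Your proposal is correct and follows essentially the same route as the paper: the paper's proof consists of citing Eisenbud--Huneke--Vasconcelos \cite[Thm.~1.1]{EHV92} (a codimension-$c$ prime is associated to $\calO_Z$ iff it lies in $\supp\calExt^c_\calO(\calO_Z,\calO)$) together with the identification \eqref{eq:Ext-shift}, which is exactly the skeleton of your argument. The only difference is that you re-derive the EHV criterion from scratch via local duality over the regular local rings $\calO_{X,\p}$ (one could also get it even more cheaply from Auslander--Buchsbaum plus the characterization of projective dimension by nonvanishing of Ext), and then carry out the same bookkeeping and the same index shift for condition (3).
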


\begin{proof}
By \cite[Thm.~1.1]{EHV92}, a prime ideal of codimension $p+2$ is associated to $\calI_{X,Z}$ if and only if it is contained in the support of 
$\calExt^{p+2}_\calO(\calO_Z,\calO) = \calExt^p_\calO(\Der(-\log Y),\calO)$.
\end{proof}

We can express freeness outside points homologically in terms of
$\calExt$-sheaves as follows.

\begin{lem}\label{lem:locfree}
The following are equivalent:
\begin{enumerate}
\item $Y$ is free outside points.
\item $\calExt^p_\calO(\calExt^q_\calO(\Omega^1(\log Y),\calO),\calO)=0$ if both  $q>0$ and $p<\ell$.
\end{enumerate}
\end{lem}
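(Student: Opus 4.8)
The plan is to reduce the equivalence to a uniform homological statement about the individual sheaves $\calExt^q_\calO(\calF,\calO)$, where I abbreviate $\calF=\Omega^1(\log Y)$, and then to recast condition (1) in the same language. Since $X$ is smooth, for any $x\in X$ the stalk $\calF_x$ is a finitely generated module over the regular local ring $\calO_{X,x}$, and such a module is free if and only if $\calExt^q_\calO(\calF,\calO)_x=0$ for all $q>0$; the nontrivial implication follows by dualizing a minimal free resolution of $\calF_x$ and noting that the highest nonvanishing cohomology of the dual complex is the cokernel of a map with entries in $\m_x$, hence nonzero. Consequently the non-free locus of $\calF$ is exactly $\bigcup_{q>0}\supp\calExt^q_\calO(\calF,\calO)$, and condition (1) is equivalent to the assertion that each of these supports is zero-dimensional, i.e. $\codim_X\supp\calExt^q_\calO(\calF,\calO)\ge\ell$ for every $q>0$.

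Next I would prove the following general fact and apply it with $\calE=\calExt^q_\calO(\calF,\calO)$ for each $q>0$: for a coherent sheaf $\calE$ on $X$,
\[
\dim\supp\calE\le0\iff\calExt^p_\calO(\calE,\calO)=0\text{ for all }p<\ell.
\]
This is purely local and rests on the identity $\grade_R N=\min\{i:\Ext^i_R(N,R)\neq 0\}=\dim R-\dim N$, valid for a finitely generated module $N$ over the regular (hence Cohen--Macaulay) local ring $R=\calO_{X,x}$, together with the stalk formula $\calExt^p_\calO(\calE,\calO)_x=\Ext^p_R(\calE_x,R)$. For the forward direction, if $\supp\calE$ is a finite set of closed points, then at such a point $\dim\calE_x=0$ and $\dim R=\ell$, so $\grade\calE_x=\ell$ and $\Ext^p_R(\calE_x,R)=0$ for $p<\ell$; at all other points $\calE_x=0$. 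For the converse, suppose some $x\in\supp\calE$ is not a closed point; then $R=\calO_{X,x}$ has dimension $d\le\ell-1$ and $\grade\calE_x=d-\dim\calE_x\le d<\ell$, so $\Ext^{\,\grade\calE_x}_R(\calE_x,R)\neq0$ with index $<\ell$, contradicting the vanishing.

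Combining the two steps finishes the argument: condition (1) holds iff each $\calExt^q_\calO(\calF,\calO)$ with $q>0$ has zero-dimensional support, iff $\calExt^p_\calO(\calExt^q_\calO(\calF,\calO),\calO)=0$ for all $p<\ell$ and all $q>0$, which is precisely condition (2).

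The main obstacle I anticipate lies in the codimension bookkeeping in the converse of the general fact: one must produce a \emph{non-closed} point $x$ in the support and keep careful track that the target exponent $\ell$ is $\dim X$ (equivalently $\dim\calO_{X,x}$ at a closed point), not the dimension $d$ of the local ring at the chosen $x$, so that the low-degree nonvanishing $\Ext^p_R$ with $p=\grade\calE_x<\ell$ genuinely obstructs (2). Verifying the two standard commutative-algebra inputs (freeness detected by $\Ext^{>0}(-,R)$ over a regular local ring, and grade equal to the codimension of the support over a Cohen--Macaulay local ring) is routine but essential; notably, reflexivity of $\calF$ plays no role in this argument.
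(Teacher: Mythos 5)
Your proof is correct, and its skeleton---identifying the non-free locus with $\bigcup_{q>0}\supp\calExt^q_\calO(\Omega^1(\log Y),\calO)$ and then comparing the codimension of these supports against the indices where $\calExt^p(-,\calO)$ can be nonzero---is the same as the paper's. The two arguments coincide in the forward direction: your identity $\grade_R N=\min\{i:\Ext^i_R(N,R)\neq0\}=\dim R-\dim N$ over a regular local ring is exactly the content the paper extracts from Ischebeck's theorem \cite[Thm.~17.1]{Mat89} to conclude that a module supported only at the maximal ideal has vanishing $\Ext^p(-,R)$ for $p<\ell$. The converse is where the routes genuinely diverge. The paper works at a point, takes a minimal prime $\p$ of codimension $p<\ell$ in the non-free locus, and cites Eisenbud--Huneke--Vasconcelos \cite[Thm.~1.1]{EHV92} to conclude that $\p$ is associated to $\Ext^p_R(\Ext^q_R(\Omega^1(\log Y),R),R)$, so the double Ext with first index \emph{equal to} $\codim\p$ is nonzero. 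You instead pass to a non-closed point $x$ of the support (the generic point of a positive-dimensional component), where $\dim\calO_{X,x}<\ell$ forces the grade of the stalk---hence the least nonvanishing Ext index---to be $<\ell$. Your version is more elementary and self-contained, needing only Rees' grade theorem and minimal free resolutions rather than the EHV citation; the paper's version reuses the very same EHV tool that drives its Lemma~\ref{lem:jac}, and records the sharper fact that the obstruction sits precisely in homological degree $\codim\p$---information your argument also recovers if one localizes exactly at the generic point, where the stalk has finite length and its grade equals $\dim\calO_{X,x}$. Your closing remark that reflexivity of $\Omega^1(\log Y)$ plays no role is accurate for both proofs; the paper's hypotheses of that kind are only used elsewhere (e.g.\ in Proposition~\ref{prop-ext}).
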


\begin{proof}
The statement can be verified locally at a point $x\in X$.
So we may replace $Y$ by its germ at $x$ and $\calO$ by $R=\calO_x$.
If $Y$ is free outside points but not free, then all associated primes of $\Ext^q_R(\Omega^1(\log Y),R)$ are maximal ideals for $q>0$. 
Then by Ischebeck's Theorem (see \cite[Thm.~17.1]{Mat89}), only $\Ext^\ell_R(\Ext^q_R(\Omega^1(\log
Y),R),R)$ can be non-zero. 
Conversely, if $Y$ is not free outside points, then there is a minimal prime $\p$ of codimension $p<\ell$, and a $q>0$ for which $\Ext^q_R(\Omega^1(\log Y),R)_\p\ne0$. 
By \cite[Thm.~1.1]{EHV92}, $\p$ is then associated to $\Ext^p_R(\Ext^q_R(\Omega^1(\log Y),R),R)$ which cannot be zero in this case.
\end{proof}

In order to relate the Ext-modules in Lemmas~\ref{lem:jac} and \ref{lem:locfree}, we use a double-dual spectral sequence.

\begin{rmk}\label{rem-Ext-ss}
The following construction, a Grothendieck spectral sequence obtained by composing the functor $\Hom_R(-,R)$ with itself, appears already in \cite{Ro62}. 
Suppose $R$ is a regular ring.
Let $0\to F_\bullet\to M\to 0$ be a projective resolution of $M$ with dual
complex $\Hom_R(F_\bullet,R)=(0\to F_0^\vee\to\cdots\to F_d^\vee\to
0)$. 
Let $C_{\bullet,\bullet}$ be a projective Cartan--Eilenberg resolution of $F_\bullet^\vee$. 
Consider the two usual spectral sequences to the dual double complex $E^0_{\bullet,\bullet}=\Hom_R(C_{\bullet,\bullet},R)$.
The total complex of $E$ is a representative of $\RR\Hom_R(\RR\Hom_R(M,R),R)\cong M$. Thus, the spectral sequences converge to $M$.

The second spectral sequence has, with suitable choice of indices, the differentials $d^0$ oriented downward and $d^1$ oriented to the left, and   
\begin{equation}\label{eq:E2SS}
E_{p,q}^2=\Ext_R^{-p}(\Ext_R^q(M,R),R)\Longrightarrow M.
\end{equation}
With this choice, the abutment $M$ is regarded as a chain complex concentrated in total degree $0$.  
\end{rmk}

\begin{prp}\label{prop-ext}
If $Y$ is free outside points, then there are isomorphisms 
\[
\calExt^{\ell-p-1}_\calO(\Der(-\log Y),\calO)\to
\calExt^\ell_\calO(\calExt^p_\calO(\Omega^1(\log Y),\calO),\calO),\quad p>0.
\]
\end{prp}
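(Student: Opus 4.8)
The plan is to run the double-dual spectral sequence of Remark~\ref{rem-Ext-ss} on $M=\Omega^1(\log Y)$ and to recognise the asserted map as one of its higher differentials. Abbreviate $N=\Omega^1(\log Y)$ and $N^\vee=\calHom(N,\calO)$. The perfect pairing $\Der(-\log Y)\times\Omega^1(\log Y)\to\calO(Y)$ recorded above gives $\Der(-\log Y)=\calHom(\Omega^1(\log Y),\calO(Y))$, hence $N^\vee=\Der(-\log Y)\otimes\calO(-Y)$; since $\calO(Y)$ is invertible (indeed free of rank one when $X=V$), tensoring yields $\calExt^i_\calO(N^\vee,\calO)\cong\calExt^i_\calO(\Der(-\log Y),\calO)$ up to this twist. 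It therefore suffices to construct isomorphisms $\calExt^{\ell-p-1}_\calO(N^\vee,\calO)\to\calExt^\ell_\calO(\calExt^p_\calO(N,\calO),\calO)$.

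Now run the spectral sequence \eqref{eq:E2SS}; writing its entries as $\calExt^a_\calO(\calExt^q_\calO(N,\calO),\calO)$ with $a$ the outer and $q$ the inner Ext-degree, such an entry sits in total degree $q-a$, while the abutment is $N$, placed in total degree $0$. Because $Y$ is free outside points, Lemma~\ref{lem:locfree} annihilates every entry with $q>0$ and $a<\ell$. The surviving entries thus form an ``L'': the bottom row $q=0$, where the entry is $\calExt^a_\calO(N^\vee,\calO)$, and the right column $a=\ell$, where it is $\calExt^\ell_\calO(\calExt^q_\calO(N,\calO),\calO)$.

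The homological differentials act by $(a,q)\mapsto(a+r,q+r-1)$ on page $r$. Tracking them shows that the bottom-row entry at $(a,q)=(\ell-p-1,0)$ and the column entry at $(\ell,p)$ are linked by the single differential
\[
d^{p+1}\colon\calExt^{\ell-p-1}_\calO(N^\vee,\calO)\to\calExt^\ell_\calO(\calExt^p_\calO(N,\calO),\calO),
\]
and that neither term is touched by any other nonzero differential (the source has no incoming and only this outgoing differential, the target no outgoing and only this incoming one, so both survive unchanged to page $p+1$). The source sits in total degree $p-\ell+1$ and the target in $p-\ell$, both negative as soon as $p\le\ell-2$; since the sequence converges to $N$ in total degree $0$, both entries must vanish on $E^\infty$. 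Vanishing of the source forces $d^{p+1}$ injective and vanishing of the target forces it surjective, whence an isomorphism for $1\le p\le\ell-2$. With the reduction of the first paragraph this gives the proposition in that range; for $p\ge\ell-1$ the reflexivity of $N$ forces $\calExt^p_\calO(N,\calO)=0$, so the target is already zero and this is the range to keep in mind.

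I expect the crux to be arguing that $d^{p+1}$ is a genuine isomorphism rather than merely epi or mono: this rests entirely on the convergence of the double-dual spectral sequence to $N$ in total degree $0$ together with the precise vanishing supplied by Lemma~\ref{lem:locfree}, and it demands the careful verification that the two entries $(\ell-p-1,0)$ and $(\ell,p)$ are isolated from all other differentials. A secondary, purely bookkeeping obstacle is the invertible twist by $\calO(Y)$ relating $N^\vee$ to $\Der(-\log Y)$, which is invisible on $V=\AA^\ell$ but must be carried along (together with the canonical sheaf) on a general smooth $X$.
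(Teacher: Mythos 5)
Your spectral-sequence argument coincides with the paper's own proof: the same double-dual spectral sequence of Remark~\ref{rem-Ext-ss} applied to $M=\Omega^1(\log Y)$, the same use of Lemma~\ref{lem:locfree} to reduce the $E^2$-page to the row $q=0$ and the column $a=\ell$, and the same conclusion that the lone differential $d^{p+1}$ joining $(\ell-p-1,0)$ to $(\ell,p)$ must be an isomorphism because both entries sit in nonzero total degree while the abutment is concentrated in degree $0$. Your isolation of that differential from all others is correct (and more explicit than in the paper), and your caveat about the range is sound: the argument produces the isomorphism exactly for $0<p\le\ell-2$. At $p=\ell-1$ the statement needs care as written, since reflexivity gives $\pd_{\calO_x}\Omega^1(\log Y)_x\le\ell-2$, hence $\calExt^{\ell-1}_\calO(\Omega^1(\log Y),\calO)=0$, while the would-be source is $\calHom_\calO(\Der(-\log Y),\calO)\cong\Omega^1(\log Y)\ne0$; the paper's ``$p>0$'' should be read with this restriction, and Theorem~\ref{th:wakefield} only needs the restricted range.

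The one genuine flaw is your identification of the $q=0$ row. You read off $N^\vee=\calHom(\Omega^1(\log Y),\calO)\cong\Der(-\log Y)\otimes\calO(-Y)$ from the $\calO(Y)$-valued pairing and defer the resulting twist as bookkeeping. In fact there is no twist: by Saito's duality (\cite{Sai80}; see \cite[Prop.~3.5]{DS10} for the version invoked elsewhere in this paper), contraction of a logarithmic vector field against a logarithmic $1$-form is a \emph{regular} function, not merely a section of $\calO(Y)$. Locally one writes $g\omega=a\,\diff f/f+\eta$ with $a\in\calO$, $\eta\in\Omega^1$ and $g$ a nonzerodivisor on $\calO_Y$; then $g\langle\theta,\omega\rangle=a\,\theta(f)/f+\langle\theta,\eta\rangle\in\calO$ and also $f\langle\theta,\omega\rangle\in\calO$, which forces $\langle\theta,\omega\rangle\in\calO$. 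This $\calO$-valued pairing is perfect, so $\calHom(\Omega^1(\log Y),\calO)\cong\Der(-\log Y)$ on the nose; that is precisely what the paper uses when it writes $E^2_{-p,0}=\Ext^p_R(\Der(-\log Y),R)$. The distinction is not cosmetic: the proposition is stated (and applied) for an arbitrary smooth $X$, and if the twist were real your argument would only yield $\calExt^{\ell-p-1}_\calO(\Der(-\log Y),\calO)\otimes\calO(Y)\cong\calExt^\ell_\calO(\calExt^p_\calO(\Omega^1(\log Y),\calO),\calO)$, a statement differing from the one to be proved by a nontrivial line bundle on non-affine $X$; your proof as written is complete only for $X=V$, where $f$ trivializes $\calO(Y)$. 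With the untwisted duality in hand, the rest of your argument globalizes exactly as in the paper: the isomorphisms are canonical spectral-sequence differentials over each affine open and patch by functoriality.
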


\begin{proof}\pushQED{\qed}
As we will see below, over any affine open subset $U\subset X$, these isomorphisms are morphisms in the Grothendieck spectral sequence in Remark~\ref{rem-Ext-ss} for $R=\calO(U)$.
By functoriality of this spectral sequence, these patch together to an isomorphism defined on $X$.

The spectral sequence $\eqref{eq:E2SS}$, applied to $M=\Omega^1(\log Y)(U)$, is illustrated in Figure~\ref{fig:SS}.
\begin{figure}[h]
\caption{$E^2_{pq}$}\label{fig:SS}
\begin{tikzpicture}[=>stealth]
\matrix(m)[matrix of math nodes,row sep=1em,column sep=1em,text width=3em,text height=2ex,text centered,nodes in empty cells]
{
{*} & 0 &\cdots & 0 & 0 & 0 & \ell-2 \\
\vdots & \vdots & & \vdots & \vdots & \vdots & \vdots \\
{*} & 0 & \cdots & 0 & 0 & 0 & 1\\
0 & 0 & {*} & \cdots & {*} & \Omega^1(\log Y) & 0\\
-\ell & -(\ell-1) & -(\ell-2) & \cdots & -1 & 0 & \null \\
};
\draw[->] (m-4-5) edge (m-1-1);
\draw[->] (m-4-3) edge (m-3-1);
\draw (m-1-1.north west) -- (m-1-7.north east);
\draw (m-5-1.north west) -- (m-5-7.north east);
\draw (m-5-1.south west) -- (m-5-7.south east);
\draw (m-1-1.north west) -- (m-5-1.south west);
\draw (m-1-7.north west) -- (m-5-7.south west);
\draw (m-1-7.north east) -- (m-5-7.south east);
\node at ($(m-5-7.north east)+(-0.2,-0.2)$) {q};
\node at ($(m-5-7.south west)+(0.2,0.2)$) {p};
\draw[=>stealth] (m-5-7.north west) -- (m-5-7.south east);
\end{tikzpicture}
\end{figure}
By Lemma~\ref{lem:locfree}, the only non-zero entries are in column
$p=-\ell$, or in row $q=0$. Let $R=\calO(U)$, and omit the argument
``$U$'' in all instances of global sections below.  
For $q=0$, we have $E^2_{-p,0}=\Ext_R^p(\Der(-\log Y),R)$, which is non-zero only if $p\le\ell-2$ since $\depth \Der(-\log Y)\ge2$ by reflexivity of $\Der(-\log Y)$. 
As $\Omega^1(\log Y)$ is reflexive, $E^2_{0,0}=\Omega^1(\log Y)$.
Hence, the only possible non-zero higher differentials $d^{p+1}\colon E^{p+1}_{-\ell+p+1,0}\to E^{p+1}_{-\ell,p}$ must be isomorphisms
\[
d^r\colon \Ext^{\ell-p-1}_R(\Der(-\log Y),R)\to
\Ext^\ell_R(\Ext^{p}_R(\Omega^1(\log Y),R),R). \qedhere
\]
\end{proof}

Adding a weakened form of tameness as a second additional hypothesis, we obtain the following result:

\begin{thm}\label{th:wakefield}
Suppose that $Y$ is free outside points and Euler homogeneous, and that
$\pd_{\calO_x}\Omega^1(\log Y)_x\leq1$ for all $x\in Y$. 
Then $Y$ is free if and only if $Z$ is pure of codimension $2$ in $X$.
\end{thm}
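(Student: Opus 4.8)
The plan is to prove both implications by translating freeness of $Y$ into the vanishing of the sheaves $\calExt^j_\calO(\Der(-\log Y),\calO)$ for $j\geq1$, and then to pin down these sheaves using the three tools already available: the reflexivity of $\Der(-\log Y)$, the weak tameness hypothesis fed through Proposition~\ref{prop-ext}, and the purity of $Z$ read off Lemma~\ref{lem:jac}. Over a regular local ring $R=\calO_x$ a finitely generated module $N$ is free exactly when $\Ext^j_R(N,R)=0$ for all $j\geq1$, since the top nonvanishing such $\Ext$ computes $\pd_R N$. Thus $Y$ is free if and only if $\calExt^j_\calO(\Der(-\log Y),\calO)=0$ for every $j\geq1$, and the whole argument will be a matter of controlling these sheaves index by index.

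For the easy implication, if $Y$ is free then $\Der(-\log Y)$ is locally free, so $\calExt^p_\calO(\Der(-\log Y),\calO)=0$ for all $p\geq1$; condition (3) of Lemma~\ref{lem:jac} then holds vacuously (empty support, so the codimension is $+\infty$), and since $Y$ is Euler homogeneous, Lemma~\ref{lem:jac} yields that $Z$ is pure of codimension $2$. For the converse I would assume $Z$ pure of codimension $2$ and may take $\ell\geq3$, the cases $\ell\leq2$ being trivial because a reflexive module over a regular local ring of dimension at most $2$ is automatically free. The first step is to confine the possibly nonzero Ext sheaves to a single index. Since $\Der(-\log Y)$ is reflexive it satisfies $\depth\geq2$ at every closed point, so Auslander--Buchsbaum gives $\pd_{\calO_x}\Der(-\log Y)_x\leq\ell-2$ and hence $\calExt^j_\calO(\Der(-\log Y),\calO)=0$ for $j\geq\ell-1$. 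Next, the hypothesis $\pd_{\calO_x}\Omega^1(\log Y)_x\leq1$ forces $\calExt^p_\calO(\Omega^1(\log Y),\calO)=0$ for $p\geq2$; inserting this into the isomorphisms of Proposition~\ref{prop-ext} (legitimate because $Y$ is free outside points) gives $\calExt^{\ell-p-1}_\calO(\Der(-\log Y),\calO)=0$ for every $p\geq2$, that is, $\calExt^j_\calO(\Der(-\log Y),\calO)=0$ for $1\leq j\leq\ell-3$. Therefore the only index at which $\calExt^j_\calO(\Der(-\log Y),\calO)$ can fail to vanish is $j=\ell-2$.

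It remains to kill this last sheaf, and here I would invoke purity. Since $Z$ is pure of codimension $2$ and $Y$ is Euler homogeneous, condition (3) of Lemma~\ref{lem:jac} applied with $p=\ell-2$ gives $\codim_X\supp\calExt^{\ell-2}_\calO(\Der(-\log Y),\calO)>(\ell-2)+2=\ell$. As $\dim X=\ell$, no nonempty closed subset of $X$ has codimension exceeding $\ell$, so this support is empty and $\calExt^{\ell-2}_\calO(\Der(-\log Y),\calO)=0$. Combining the three vanishing ranges, $\calExt^j_\calO(\Der(-\log Y),\calO)=0$ for all $j\geq1$, so $\Der(-\log Y)$ is locally free; by the equivalences recorded after display~\eqref{eq:logflogY}, $Y$ is free.

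Since the substantive homological algebra is already packaged in Lemma~\ref{lem:jac}, Lemma~\ref{lem:locfree} and Proposition~\ref{prop-ext}, the proof is essentially an assembly, and the main obstacle is the bookkeeping that isolates $j=\ell-2$ as the unique surviving index. One must verify that the reflexivity bound (the upper range $j\geq\ell-1$) and the tameness-plus-Proposition~\ref{prop-ext} vanishing (the lower range $1\leq j\leq\ell-3$) together leave exactly one gap, and that this gap falls precisely where purity, through the shift in Lemma~\ref{lem:jac}(3), forces codimension strictly greater than $\ell$. The delicate points to check are the numerical matching $p=\ell-2$ in Lemma~\ref{lem:jac}(3), the edge cases $\ell\leq2$, and the passage from local freeness at the finitely many non-free points to genuine freeness of $Y$.
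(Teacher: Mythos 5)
Your proof is correct and follows essentially the same route as the paper's: localize, use Proposition~\ref{prop-ext} together with $\pd_{\calO_x}\Omega^1(\log Y)_x\leq1$ and the depth bound coming from reflexivity of $\Der(-\log Y)$ to confine the possibly nonzero $\calExt^j_\calO(\Der(-\log Y),\calO)$ to the single index $j=\ell-2$, then kill that sheaf using purity of $Z$ through Lemma~\ref{lem:jac}(3), whose codimension bound exceeds $\ell$. The only differences are cosmetic: you spell out the easy direction (via the vacuous case of Lemma~\ref{lem:jac}(3)) and the $\ell\leq2$ cases, both of which the paper treats as implicit or dispatches with ``all plane curves are free.''
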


\begin{proof}
All plane curves are free, so assume $\ell\geq3$.  

The statement is clearly local, so we may replace $Z\subseteq Y\subseteq X$ by their germs at $x\in Z$ and work over $R=\calO_x$.
As $\pd_R\Omega^1(\log Y)\leq1$, we have $E^2_{p,q}=0$ for $q\geq2$ in \eqref{eq:E2SS} for $M=\Omega^1(\log Y)_x$. 
By Proposition~\ref{prop-ext}, $\Ext^p_R(\Der(-\log Y),R)=0$ unless $p=0,\ell-2$.

Since $Z$ is pure of codimension $2$ in $X$, Lemma~\ref{lem:jac} implies 
\[
\codim_X\supp\Ext^\ell_R(\Der(-\log Y),R)>\ell=\dim R.
\]
Therefore, $\Ext^\ell_R(\Der(-\log Y),R)=0$ for all $p>0$ and hence $\Der(-\log Y)$ is free as claimed.
\end{proof}

\subsection{Arrangements}

By Proposition~\ref{prp:arrEuler}, hyperplane arrangements in good characteristic provide a family of examples of Euler homogeneous divisors.
In this case we can allow a larger non-free locus.

\begin{ntn}
If $F$ is a flat of an arrangement, then  $\calA_F$ is the arrangement of all hyperplanes containing $F$. 
\end{ntn}

\begin{cor}\label{cor-max}
Let $\calA$ be a hyperplane arrangement in good characteristic with $\pd_S\Omega^1(\calA)\le 1$. 
If $J_\A$ is pure of codimension $2$ then $\calA$ is free.
\end{cor}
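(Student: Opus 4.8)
The plan is to reduce the assertion to the single vanishing $N:=\Ext^1_S(\Omega^1(\calA),S)=0$. Indeed, because $\pd_S\Omega^1(\calA)\le1$, a length-one graded free resolution $0\to F_1\to F_0\to\Omega^1(\calA)\to0$ splits exactly when its dual $F_0^\vee\to F_1^\vee$ is surjective, i.e. when $N=0$; a split such resolution exhibits $\Omega^1(\calA)$ as a summand of $F_0$, hence free. By Proposition~\ref{prp:arrEuler}, good characteristic makes $\calA$ Euler homogeneous, so $\calI_{X,Z}=J_\calA$ and the hypothesis that $J_\calA$ be pure of codimension $2$ is precisely the purity of $Z$. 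Since we do not assume $\calA$ free outside points, Theorem~\ref{th:wakefield} does not apply directly; instead I would rerun the spectral sequence \eqref{eq:E2SS} with this weaker geometric input.

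I would assemble three ingredients. First, reflexivity of $\Omega^1(\calA)$ gives Serre's condition $S_2$, whence $\codim_X\supp\Ext^i_S(\Omega^1(\calA),S)\ge i+2$ for $i\ge1$; in particular $\codim\supp N\ge3$ (so for $\ell\le2$ already $N=0$, and we may assume $\ell\ge3$). Second, the perfect pairing \eqref{omega-products} and the identity $\Der(-\log\calA)=\Omega^{\ell-1}(\calA)$ identify $\Der(-\log\calA)$ with $\Hom_S(\Omega^1(\calA),\Omega^\ell(\calA))\cong\Hom_S(\Omega^1(\calA),S)$, so the bottom row of the spectral sequence is $E^2_{-p,0}=\Ext^p_S(\Der(-\log\calA),S)$, exactly as in the proof of Proposition~\ref{prop-ext}. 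Third, Euler homogeneity together with purity of $Z$ feeds Lemma~\ref{lem:jac}(3), giving $\codim_X\supp\Ext^p_S(\Der(-\log\calA),S)>p+2$ for $p\ge1$.

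Now I would run \eqref{eq:E2SS} for $M=\Omega^1(\calA)$. As $\pd_S M\le1$, only the rows $q=0,1$ survive, and by the first ingredient $E^2_{-p,1}=\Ext^p_S(N,S)$ vanishes for $p\le2$. Bookkeeping the differentials $d^r\colon E^r_{a,b}\to E^r_{a-r,b+r-1}$, the only one meeting the row $q=1$ is $d^2\colon E^2_{-p+2,0}\to E^2_{-p,1}$, and no differential leaves this row (its targets sit in rows $q\ge2$). Since the sequence converges to $M$ concentrated in total degree $0$, every $E^\infty_{-p,1}$ with $p\ge3$ must vanish, forcing $d^2$ to be surjective:
\[
\Ext^{p-2}_S(\Der(-\log\calA),S)\twoheadrightarrow\Ext^p_S(N,S),\qquad p\ge3.
\]

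Finally I would argue by contradiction. If $N\ne0$, set $g=\grade N=\codim\supp N\ge3$. Then $\Ext^g_S(N,S)\ne0$, and being the lowest nonvanishing Ext it has support of codimension exactly $g$ (its support contains the codimension-$g$ minimal primes of $N$). On the other hand, by the surjection above with $p=g\ge3$ together with Lemma~\ref{lem:jac}(3), $\Ext^g_S(N,S)$ is a quotient of $\Ext^{g-2}_S(\Der(-\log\calA),S)$, whose support has codimension $>(g-2)+2=g$; a quotient cannot enlarge support, so $\codim\supp\Ext^g_S(N,S)>g$, a contradiction. Hence $N=0$ and $\Omega^1(\calA)$ is free. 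I expect the main obstacle to be the spectral-sequence bookkeeping of the third paragraph—verifying that $d^2$ is the sole relevant differential and therefore surjective—paired with the precise codimension count for the extremal Ext module in the last step.
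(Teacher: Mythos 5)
Your argument is correct, but it takes a genuinely different route from the paper's. The paper never runs the spectral sequence under your weaker input: it reduces to the free-outside-points case by a descending induction over the intersection lattice --- since $\calA\mapsto\Omega^1(\calA)$ and $\calA\mapsto J_\calA$ are local functors, one localizes at a flat all of whose larger flats are free, splits off the inessential part as a product, and essentializes --- and then cites Theorem~\ref{th:wakefield}, whose proof needs Proposition~\ref{prop-ext} and hence the non-free locus to be a point (so that the $E^2$-page of \eqref{eq:E2SS} lives only in the row $q=0$ and the column $p=-\ell$). You instead keep just the two rows $q=0,1$ (from $\pd_S\Omega^1(\calA)\le1$), replace the support-at-a-point input by the bound $\grade N\ge3$ coming from reflexivity (a reflexive module over a regular ring is a second syzygy, hence locally free in codimension $\le 2$), and play the forced surjection $\Ext^{g-2}_S(\Der(-\log\calA),S)\onto\Ext^g_S(N,S)$ at the critical grade $g$ against Lemma~\ref{lem:jac}(3); the differential bookkeeping and the claim that $\codim\supp\Ext^g_S(N,S)=g$ exactly are both sound. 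Notably, your route uses no arrangement combinatorics and never uses smallness of the non-free locus, so it actually proves the divisor-level statement --- Euler homogeneous, $\pd_{\calO_x}\Omega^1(\log Y)_x\le1$, $Z$ pure of codimension two implies free --- i.e.\ Theorem~\ref{th:wakefield} without the ``free outside points'' hypothesis, and hence Theorem~\ref{thm-max-anal} without holonomicity or strong Euler homogeneity; this is a genuine strengthening of what the paper records. What the paper's route buys is brevity (Theorem~\ref{th:wakefield} is already proved) and a template that transfers to the analytic setting via logarithmic strata. Two cosmetic points you should add: first reduce to central $\calA$, so that a global Euler field exists and $J_\calA=\calI_{X,Z}$, matching your purity hypothesis with Lemma~\ref{lem:jac}(1); and note that $\Hom_S(\Omega^1(\calA),S)\cong\Der(-\log\calA)$ holds only up to a degree twist, which is harmless here.
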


\begin{proof}
As the statement is local, we may assume that $\A$ is central.
Let $X_0\in L(\A)$ be a flat such that $\Omega^1(\A_{X'})$ is free for
all flats $X'\supsetneq X_0$;
then it suffices to prove that $\Omega^1(\A_{X_0})$ is free.

Both $\A\mapsto\Omega^1(\A)$ and $\A\mapsto J_\A$ are local functors in the sense of \cite{ST87}, so we may replace $\A$ by $\A_{X_0}$.

Now $\A$ is the product of its essentialization $\A_0$ (in $X/X_0$)
and the affine space $X_0$.  Then $\Omega^1(\A)$ is the
direct sum of the free $S$-modules $\calO_{X/X_0}(X/X_0)\otimes_\KK
\Omega^1_{X_0}$ and $\calO_{X_0}(X_0)\otimes_\KK \Omega^1_{X/X_0}(\A_0)$,
these being $S$-modules via $S\onto \calO_{X_0}(X_0)$ and
$S\onto\calO_{X/X_0}(X/X_0)$. 
Moreover, $J_\A$ and $J_{\A_0}$ have the same generators in a generic
point of $X$. 
We may therefore replace $\A$ by $\A_0$.

Now $\A$ is free outside points and hence free by
Theorem~\ref{th:wakefield}.
\end{proof}

\begin{rmk}\
\begin{asparaenum}

\item 
The hypothesis that $\pd_S\Omega^1\leq 1$ is necessary. 
The Edelman--Reiner example \cite{ER93} defined by
\begin{equation}\label{eq:ER}
Q=\prod_{\alpha\in\set{0,1}^4,\alpha\neq(0,0,0,0)} (\alpha_1x_1+\alpha_2x_2+
\alpha_3x_3+\alpha_4x_4)
\end{equation}
is free outside the origin, its Jacobian ideal has no embedded primes,
yet it is not free.  Here, $\pd_S\Omega^1(\calA)=2$.

\item Add a generic hyperplane $x_5=0$ to the example $\calA$ above to
  obtain an arrangement $\calB$ defined by $Qx_5$.  The associated
  primes of $J(Qx_5)=(Q,x_5J(Q))=(Q,x_5)\cap J(Q)$ are all codimension
  $2$, since $J(Q)$ has no submaximal embedded components.
  Nevertheless, $\calB$ is not free outside points, since $\calA$
  defines a non-maximal, non-free flat of $\calB$.  Again, $\pd
  \Omega^1(\calB)=2$.

\item One way of reading (the proof of) Corollary~\ref{cor-max} is:
  ``if $\pd_S\Omega^1(\calA)\le 1$ then the top-dimensional non-free
  flats of $\calA$ are the minimal embedded primes of $J_\calA$.''

\item Let $\calA$ be of rank three.  Then $\calA$ is free outside
  points and $\pd_S\Omega^1\leq1$ due to reflexivity.  Thus, in this
  case Theorem~\ref{th:wakefield} becomes the well-known statement
  ``$\calA$ is free if and only if $J_\calA$ has no embedded primes.''

\item
The hypothesis that $J_\calA$ has no embedded primes is necessary.
Generic arrangements of $n>\ell$ hyperplanes are free outside points. 
If $\ell\geq3$, they have $\pd_S\Omega^1(\calA)=1$ by \cite[Cor.~7.7]{Zie89}, so they are not free.

\end{asparaenum}
\end{rmk}

\begin{rmk}
In \cite{MuSc01}, Proposition 2.10 and 2.7, it is shown that if $\dim X\geq p-1$ then freeness of $D^p_{\calA_X}$ is equivalent to freeness of $\calA_X$.
\end{rmk}

\subsection{Analytic divisors}

We aim for a generalization of Corollary~\ref{cor-max} to more general
divisors.  For the desired analogue of the stratification by flats of
an arrangement we need to move to the analytic category.  So let $X$ be a
complex $\ell$-dimensional analytic manifold.

The stratification by flats for arrangements is a special case of the
\emph{logarithmic stratification} $\{Y_\alpha\}$ along $Y$ introduced by
K.~Saito \cite[\S3]{Sai80}.
It is uniquely characterized by the following properties \cite[Lemma~3.2]{Sai80}:
\begin{enumerate}
\item Each stratum $Y_\alpha$ is a smooth connected immersed submanifold of $X$, and $X$ is their disjoint union.
\item If $x\in Y_\alpha$ then the tangent space $T_{Y_\alpha,x}$ coincides with the space $\Der(-\log Y)_x$ of logarithmic vector fields evaluated at $x$.
\item If $Y_\alpha$ meets the closure of $Y_\beta$ in $X$ with $\alpha\not=\beta$ then $Y_\alpha$ is in the boundary of $Y_\beta$. 
\end{enumerate}

Note that $Y$, $X\smallsetminus Y$, and $\Sing(Y)$ are (unions of)
logarithmic strata.
The term ``logarithmic stratification'' is a misnomer as it is not locally finite in general.

\begin{exa}[{\cite[Rem.~4.2.4]{Cal99}}]
Each point on the $z$-axis is a logarithmic stratum of the free divisor $Y=(xy(x+y)(x+zy))$.
\end{exa}

\begin{dfn}[{\cite[Def.~3.8]{Sai80}}]
The divisor $Y$ is called \emph{holonomic} (at $x\in X$), if its logarithmic stratification is locally finite (at $x$). 
For free $Y$, holonomicity is also referred to as \emph{Koszul freeness}.
\end{dfn}
 
By \cite[(3.10.i),(3.13.(ii)]{Sai80}, if $Y$ is holonomic at $x\in Y_\alpha$ then the manifold topology of $Y_\alpha$ is the induced topology from $X$ near $x$; this can fail in general.

Saito introduced the \emph{logarithmic characteristic variety} $L_Y(\log Y)\subset T^*_X$, defined by the symbols of $\Der(-\log Y)$ with respect to the order filtration $F_\bullet$ on $\calD_X$.
He showed that holonomicity is (locally) equivalent to $L_Y(-\log Y)$ having minimal dimension $\ell$ \cite[(3.18)]{Sai80}.
In particular, Saito's holonomicity implies that $M^{\log
  Y}=\calD_X/\calD_X\cdot\Der(-\log Y)$ is a holonomic $D$-module in
the sense of Kashiwara. This implication cannot be reversed, as we were
informed by F.~Castro Jim\'enez:

\begin{exa}\label{exa-paco}
The divisor $Y$ given by $(xz+y)(x^4+y^5+xy^4)$ is not holonomic in the sense of Saito, but $M^{\log Y}$ is a holonomic $D$-module.
\end{exa}

By \cite[(3.6)]{Sai80}, the logarithmic stratification provides us with the local product structures that we used in the proof of Corollary~\ref{cor-max}:

\begin{prp}\label{prop:loc-prod}
Let $x\in Y_\alpha$ and set $m=\dim Y_\alpha$. 
On small neighborhoods $U$ of $x$, $Y\cap
U\subseteq U$ is defined by some $f$ such that
$f(z)=f(z_1,\ldots,z_{\ell-m},0,\dots,0)$ and $Y_\alpha\cap
U_x=\{z_1=\dots=z_{\ell-m}=0\}$ for some coordinate system
$z_1,\dots,z_\ell$ on $U$.  In other words, there is an isomorphism of
pairs of germs
\begin{equation}\label{eq:loc-prod}
(Y_x,X_x)\cong(Y'_0,\CC^{\ell-m}_0)\times(Y_{\alpha,x},Y_{\alpha,x}),\quad Y_{\alpha,x}\cong\CC^m_0.
\end{equation}
This isomorphism identifies the logarithmic stratifications of $Y_x$ and $Y'_0$.\qed
\end{prp}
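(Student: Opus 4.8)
The plan is to invoke Saito's structure theorem for the logarithmic stratification, specifically the local product decomposition recorded in \cite[(3.6)]{Sai80}, and then to check that the coordinate normal form it provides is compatible with the defining equation and with the substratification. First I would fix $x \in Y_\alpha$ with $m = \dim Y_\alpha$. Property (2) of the logarithmic stratification says that the tangent space $T_{Y_\alpha,x}$ equals the evaluation $\Der(-\log Y)_x := \{\delta(x) : \delta \in \Der(-\log Y)\}$, an $m$-dimensional subspace of $T_{X,x}$. Since $Y_\alpha$ is smooth of dimension $m$, I can choose $m$ logarithmic vector fields $\delta_1,\dots,\delta_m \in \Der(-\log Y)$ whose values at $x$ span $T_{Y_\alpha,x}$; after shrinking to a small neighborhood $U$ they remain linearly independent and pointwise tangent to $Y_\alpha$.

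The heart of the argument is to straighten these $m$ logarithmic vector fields simultaneously. Because $\Der(-\log Y)$ is closed under Lie bracket (it is a Lie subalgebra of $\Der_X$, being the stabilizer of the ideal $\calO(-Y)$), and because the $\delta_i(x)$ are independent, one can, after replacing the $\delta_i$ by suitable $\calO$-combinations, arrange that they commute and hence (by the analytic Frobenius theorem) form the first $m$ coordinate fields of a new analytic coordinate system $z_1,\dots,z_\ell$ on a possibly smaller $U$. In these coordinates $Y_\alpha \cap U = \{z_1 = \dots = z_{\ell-m} = 0\}$, so that the last $m$ coordinates are tangent to the stratum while the first $\ell - m$ are transverse. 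The key consequence is that each straightened field $\partial/\partial z_{\ell-m+j}$ lies in $\Der(-\log Y)$, so it preserves the ideal generated by $f$; applying this to $f$ shows $\partial f / \partial z_{\ell - m + j}$ is a multiple of $f$. As $f$ is reduced and the $z_{\ell-m+j}$ are transverse to $Y$ near a point where $Y_\alpha$ is the stratum through $x$, this forces $\partial f/\partial z_{\ell-m+j} = 0$ for $j = 1,\dots,m$ (up to renormalizing $f$ by a unit, using reducedness to rule out $f$ dividing its own derivative), i.e.\ $f = f(z_1,\dots,z_{\ell-m})$ is independent of the stratum directions. This yields the product decomposition \eqref{eq:loc-prod}, with $Y'_0$ cut out by $f(z_1,\dots,z_{\ell-m},0,\dots,0)$ in $\CC^{\ell-m}_0$ and the second factor the smooth germ $Y_{\alpha,x} \cong \CC^m_0$.

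The main obstacle is the simultaneous straightening step: one must ensure that the chosen logarithmic vector fields can be made to commute while staying inside $\Der(-\log Y)$, so that Frobenius applies and the resulting coordinates are adapted both to $Y$ and to the stratum. This is precisely the content of Saito's \cite[(3.6)]{Sai80}, where the integrability of the distribution spanned by $\Der(-\log Y)$ on the stratum, together with the minimality property characterizing $Y_\alpha$, is exploited to produce the adapted chart. Finally, I would verify that this isomorphism of germs carries the logarithmic stratification of $Y_x$ to that of $Y'_0 \times Y_{\alpha,x}$: since the straightened fields $\partial/\partial z_{\ell-m+j}$ act as infinitesimal symmetries of $Y$ and of its logarithmic stratification (translations in the stratum directions preserve $\Der(-\log Y)$ by property (2) and the characterization in \cite[Lemma~3.2]{Sai80}), the strata of $Y$ near $x$ are exactly products of the strata of $Y'_0$ with $Y_{\alpha,x}$, which is the asserted identification. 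The smoothness and dimension count in the last factor follow directly from $Y_\alpha$ being smooth of dimension $m$, so no further work is needed there.
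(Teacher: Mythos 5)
The paper offers no proof of this proposition at all: it is stated with a \qed{} and justified entirely by the citation to Saito \cite[(3.6)]{Sai80} in the sentence preceding it. Since your proposal also ultimately rests on that same citation, your logical skeleton agrees with the paper's; in that sense the proposal is acceptable.

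However, your reconstruction of what happens inside Saito's proof is not accurate, and it would break at exactly the step you flag if run as a standalone argument. You propose to pick $\delta_1,\dots,\delta_m\in\Der(-\log Y)$ spanning $T_{Y_\alpha,x}$, make them commute after ``suitable $\calO$-combinations,'' and apply Frobenius. Frobenius needs the rank-$m$ distribution $\Delta=\sum_i\calO\,\delta_i$ to be involutive on a \emph{neighborhood} of $x$. What you actually know is only $[\delta_i,\delta_j]\in\Der(-\log Y)$; this forces $[\delta_i,\delta_j](y)\in\Delta(y)$ only at points $y\in Y_\alpha$, where the evaluation $\Der(-\log Y)(y)=T_{Y_\alpha,y}$ is exactly $m$-dimensional. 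At nearby points off the stratum (in particular off $Y$, where the evaluation is all of $T_{X,y}$) the bracket has no reason to lie in $\Delta(y)$, so involutivity can fail and Frobenius does not apply. The existence of $m$ \emph{commuting} logarithmic fields spanning the stratum directions is a consequence of (3.6), not an a priori input, so your sketch is circular---as you half-concede by deferring ``the main obstacle'' back to (3.6) itself. Saito's actual argument is an induction on $m$ using his (3.5): straighten a \emph{single} non-vanishing logarithmic field $\delta=\partial/\partial z_\ell$ by the flow-box theorem; from $\delta(f)=gf$ solve the ODE $\delta(u)=-gu$ for a unit $u$, so that $uf$ is independent of $z_\ell$; this yields a product with one $\CC$-factor, splits $\Der(-\log Y)$ and hence the stratification compatibly, drops the stratum dimension by one, and iterates. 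Note also that the vanishing of the stratum-direction derivatives of the defining equation comes from this unit-adjustment ODE, not from reducedness of $f$ as you suggest; your final paragraph identifying the two logarithmic stratifications is fine once the product normal form is in hand.
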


By connectedness of the strata this implies in particular:

\begin{cor}
Freeness of $Y$ at $x\in Y_\alpha$ is a property of $Y_\alpha$.
\end{cor}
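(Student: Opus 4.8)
The plan is to combine the local product structure of Proposition~\ref{prop:loc-prod} with the connectedness of the strata and the fact that freeness descends through a product with a smooth factor. Concretely, the goal is to show that the Boolean-valued assignment $x\mapsto(\text{$Y$ is free at $x$})$ is locally constant on $Y_\alpha$, and then to invoke connectedness.

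First I would fix $x\in Y_\alpha$, set $m=\dim Y_\alpha$, and apply Proposition~\ref{prop:loc-prod} to obtain the decomposition $(Y_x,X_x)\cong(Y'_0,\CC^{\ell-m}_0)\times(Y_{\alpha,x},Y_{\alpha,x})$, in which the second factor is smooth of dimension $m$. Since a local defining equation $f$ may be chosen independent of the last $m$ coordinates $z_{\ell-m+1},\dots,z_\ell$, the module $\Der(-\log Y)_x$ splits: the coordinate fields $\del/\del z_{\ell-m+1},\dots,\del/\del z_\ell$ span a free summand, and the complementary summand is $\Der(-\log Y')_0$ extended along the smooth factor. This is exactly the product argument used in the proof of Corollary~\ref{cor-max}, applied now to the local product supplied by the logarithmic stratification rather than to the combinatorial product of an arrangement. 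It follows that $Y$ is free at $x$ if and only if the transverse germ $Y'_0\subset\CC^{\ell-m}_0$ is free at the origin.

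Next I would observe that the isomorphism of Proposition~\ref{prop:loc-prod} holds on an entire neighborhood $U$ of $x$ and identifies the logarithmic stratifications; in particular every point $x'\in Y_\alpha\cap U$ carries the same transverse germ $Y'_0$ up to isomorphism. Hence freeness at $x'$ is equivalent to freeness at $x$ for all nearby $x'$ in the stratum, so the function recording freeness is locally constant on $Y_\alpha$. Since each logarithmic stratum is connected by property (1) of the stratification, a locally constant function on $Y_\alpha$ is constant, giving the claim.

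The step I expect to require the most care is the descent of freeness through the smooth factor in the second paragraph: one must check that the free summand spanned by the $m$ transverse-constant coordinate fields neither creates nor obstructs freeness of the whole, so that freeness of $Y_x$ is genuinely equivalent to freeness of $Y'_0$. As this is the same product mechanism already exploited for the essentialization in Corollary~\ref{cor-max}, I expect it to go through with only the routine verification that $\Der(-\log Y)_x$ is free precisely when its transverse summand is.
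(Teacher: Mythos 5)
Your proof is correct and takes essentially the same route as the paper: the paper deduces the corollary directly from the local product structure of Proposition~\ref{prop:loc-prod} together with connectedness of the strata, exactly as you do. The splitting of $\Der(-\log Y)_x$ into a free transverse-constant summand plus the extension of $\Der(-\log Y')_0$, which you verify explicitly, is precisely the detail the paper leaves implicit in the phrase ``by connectedness of the strata this implies.''
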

There is a similar corollary for holonomicity, and the notion of a holonomic stratum, but we do not need it (see \cite[(3.10) Prop.~i)]{Sai80}).

\begin{dfn}
We call $Y_\alpha$ a free stratum if $Y$ is free at some $x\in Y_\alpha$.
\end{dfn}

Euler homogeneity does not descend in general to factors in a  product;
in fact, products with analytic affine spaces are always Euler homogeneous.

\begin{exa}
Let $Y=Y'\times\AA^1_\CC$ with $Y'=(f(x))\subseteq\AA^{\ell -1}_\CC$.
Then $e^y\cdot f(x)$ is a defining equation for $Y$ and $\chi=\frac{\del}{\del y}$ is an Euler vector field for $Y$.
So $Y$ is Euler homogeneous. 
However, choosing $Y'$ with an isolated non-quasihomogeneous singularity, it is not Euler homogeneous by Saito's theorem \cite{Sai71}.
\end{exa}

We need the following stronger version of Euler homogeneity which is well-known in the context of the Logarithmic Comparison Theorem for free divisors (see \cite[Conj.~1.4]{CMNC02} and, for example, \cite{GS06,GMS09,GS10}).

\begin{dfn}
The divisor $Y$ is called \emph{strongly Euler homogeneous} if it admits, at each point $x\in X$, an Euler vector field vanishing at $x$.
\end{dfn}

By \cite[Lem.~3.2]{GS06}, strong Euler homogeneity for $Y$ and $Y'$ in \eqref{eq:loc-prod} are equivalent.

\begin{thm}\label{thm-max-anal}
Let $Y$ be a strongly Euler homogeneous and holonomic divisor in a complex manifold $X$ such that $\pd_{\calO_x}\Omega^1(\log Y)_x\le 1$ for all $x\in X$.
Then $Y$ is free if $Z$ is of pure of codimension $2$ in $X$.
\end{thm}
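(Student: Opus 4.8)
The plan is to reduce the analytic statement of Theorem~\ref{thm-max-anal} to the already-established algebraic statement of Theorem~\ref{th:wakefield} by checking that each hypothesis transfers and that the local product structure of Proposition~\ref{prop:loc-prod} lets us discard the ``extra'' directions along strata. Since the assertion is local, I would fix a point $x\in Z$ and work over the analytic local ring $R=\calO_{X,x}$; as in the proof of Theorem~\ref{th:wakefield}, plane curves are free, so I may assume $\ell\ge3$. The key point is that Theorem~\ref{th:wakefield} required three hypotheses on $Y$: Euler homogeneity, $\pd_R\Omega^1(\log Y)\le1$, and freeness outside points. The first two are given (strong Euler homogeneity is in particular Euler homogeneity, and the projective-dimension bound is assumed), so the real work is to deduce \emph{freeness outside points} from \emph{holonomicity} together with the pure-codimension-two hypothesis on $Z$.

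First I would use the logarithmic stratification. Let $Y_\alpha$ be the stratum through $x$ and set $m=\dim Y_\alpha$. By holonomicity the stratification is locally finite at $x$, so near $x$ there are only finitely many strata and I can argue by descending induction on codimension exactly as in Corollary~\ref{cor-max}, treating $\Omega^1(\log Y)$ and the Jacobian scheme $Z$ as the analogues of the ``local functors'' $\Omega^1(\A)$ and $J_\A$. Concretely, I would assume inductively that $Y$ is free along every stratum of dimension strictly greater than $m$, so that the non-free locus meets the neighborhood of $x$ only in $Y_\alpha$ and lower strata. Then, by the product decomposition \eqref{eq:loc-prod} of Proposition~\ref{prop:loc-prod}, the germ $(Y_x,X_x)$ splits as $(Y'_0,\CC^{\ell-m}_0)\times(\CC^m_0,\CC^m_0)$, where $Y'$ is the essential germ transverse to $Y_\alpha$. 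Under this splitting $\Omega^1(\log Y)$ decomposes as a direct sum of a free summand coming from the $\CC^m$-factor and the pullback of $\Omega^1(\log Y')$, so freeness of $Y$ at $x$ is equivalent to freeness of $Y'$ at $0$; by the inductive hypothesis $Y'$ is free outside the origin, i.e. free outside points in the sense of the theorem.

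Next I must verify that the remaining hypotheses of Theorem~\ref{th:wakefield} descend to $Y'$. Strong Euler homogeneity of $Y'$ follows from strong Euler homogeneity of $Y$ by \cite[Lem.~3.2]{GS06}, which is exactly the equivalence cited just before the theorem for the pair in \eqref{eq:loc-prod}; note it is here that I genuinely need \emph{strong} Euler homogeneity rather than the plain version, since ordinary Euler homogeneity does not descend across the product (as the example preceding the definition shows). The projective-dimension bound $\pd_{\calO}\Omega^1(\log Y')\le1$ is inherited from that of $\Omega^1(\log Y)$ because, under the product splitting, $\Omega^1(\log Y)$ is a direct sum of a free module and the extension of scalars of $\Omega^1(\log Y')$ along the faithfully flat map $\calO_{\CC^{\ell-m}}\to\calO_X$, and projective dimension is unchanged by such a base change together with the addition of a free summand. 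Finally, purity of codimension two for the Jacobian scheme $Z$ of $Y$ is equivalent, under the same splitting, to purity of codimension two for the Jacobian scheme $Z'$ of $Y'$, since the product with a smooth factor shifts both the ideal and its codimension uniformly.

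With all hypotheses of Theorem~\ref{th:wakefield} verified for the essential germ $Y'$ over its local ring, that theorem yields freeness of $Y'$, hence of $Y$ at $x$; since $x\in Z$ was arbitrary and $Y$ is automatically free off $Z$, this gives global freeness. I expect the main obstacle to be the bookkeeping in the second and third steps: establishing cleanly that each of the three hypotheses (strong Euler homogeneity, the $\pd\le1$ bound, and purity of $Z$) is genuinely \emph{equivalent} across the product decomposition \eqref{eq:loc-prod}, rather than merely inherited in one direction. The strong-Euler-homogeneity descent relies essentially on the cited \cite[Lem.~3.2]{GS06}, and it is precisely the failure of the weaker form to descend that forces the hypothesis of Theorem~\ref{thm-max-anal} to be \emph{strong} Euler homogeneity; making sure the inductive product argument interacts correctly with holonomicity (local finiteness of the stratification, which is what allows the descending induction to terminate) is the delicate part.
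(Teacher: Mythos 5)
Your proposal follows essentially the same route as the paper's proof: descending induction over the logarithmic strata (made possible by holonomicity), the local product decomposition of Proposition~\ref{prop:loc-prod} to reduce to a transverse germ $Y'$ that is free outside points, descent of strong Euler homogeneity via \cite[Lem.~3.2]{GS06}, of the bound $\pd_{\calO_x}\Omega^1(\log Y)_x\le1$ (where the paper cites \cite[Lem.~2.2.(iv)]{CNM96} and you argue directly by flat base change plus a free summand), and of purity of $Z$ via $Z=Z'\times Y_{\alpha,x}$, followed by an application of Theorem~\ref{th:wakefield}. The only detail you gloss over is that Theorem~\ref{th:wakefield} is stated in the algebraic category, so one must observe---as the paper does explicitly---that its proof goes through verbatim over the regular analytic local ring $\calO_x$, rather than citing the statement as if it applied directly.
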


\begin{proof}
We proceed along the lines of the proof of Corollary~\ref{cor-max}.

Let $Y_\alpha$ be a stratum such that all strata $Y_\beta$ containing
$Y_\alpha$ in their closure are free.
To show that $Y_\alpha$ is free, we may replace $Z\subset Y\subset X$ by their germs at $x\in Z$ and work over $R=\calO_x$.

Using Proposition~\ref{prop:loc-prod}, we may replace $Y$ by $Y'$ in
\eqref{eq:loc-prod}, and hence assume that $Y$ is free outside points.
Here we use that strong Euler homogeneity is preserved by \cite[Lem.~3.2]{GS06}, freeness and $\pd_R\Omega^1(\log Y)$ are preserved by \cite[Lem.~2.2.(iv)]{CNM96}, and $Z=Z'\times Y_{\alpha,x}$ where $Z'$ is the Jacobian scheme of $Y'$ in \eqref{eq:loc-prod}. 

Finally, we apply Theorem~\ref{th:wakefield} whose proof works also in the analytic setup.
Thus, $Y_\alpha$ is free and the claim then follows by descending induction on $\dim Y_\alpha$.
\end{proof}

\section{Homological properties of logarithmic forms}\label{sec-logforms}

The previous section indicates the importance of the condition $\pd_\calO\Omega^1(\log Y)\le 1$. 
The results in the present section come from an attempt to understand what can be rescued if this hypothesis on $\pd_\calO\Omega^1(\log Y)$ is false or unknown.
Specifically, we are interested in the difference between $\Omega^p(\log Y)$ and $\bigwedge^p\Omega^1(\log Y)$. 
Recall that $M^\vee=\Hom_R(M,R)$ for any module.

\begin{conv}
Throughout this section we assume that, if $\A$ is an arrangement of rank
$\ell$ over a field $\KK$, then $\chr\KK$ is either zero, 
or both good (Definition~\ref{dfn:goodp}) and at least $\ell$.
\end{conv}

\subsection{Higher forms as exterior products}

The exterior product gives an exact sequence
\begin{equation}\label{eq:E}
\xymat{ 
\bigwedge^p\Omega^1(\log Y)\ar[r]^-{j_p} &
\Omega^p(\log Y)\ar[r] & E^p_Y:=\displaystyle
\frac{\Omega^p(\log Y)}{\bigwedge^p\Omega^1(\log Y)} \ar[r] & 0. 
}
\end{equation}
Saito~\cite{Sai80} showed that in all free points of $Y$, $j_p$ is an
isomorphism for all $p$; see also~\cite[formula (2.3)]{DS10}.  In
general, one knows from \cite[Prop.~2.2]{DS10} that $\Omega^p(\log Y)$
is the reflexive hull $\big(\bigwedge^p\Omega^1(\log
Y)\big)^{\vee\vee}$ of $\bigwedge^p\Omega^1(\log Y)$.

Given some information about the codimension of the
non-free locus, it is possible to say more.  Musta\c ta and
Schenck prove the following for arrangements in \cite{MuSc01}:

\begin{thm}\label{th:MSwedges}
Let $\calA$ be free outside points and $\pd_S\Omega^1(\A)=1$.
Then, for $0\leq p\leq \ell-2$, $j_p$ in \eqref{eq:E} is an isomorphism and $\pd_S\Omega^p(\A)=p$.
Moreover, $\pd_S\Omega^{\ell-1}(\A)=\ell-2$.\qed
\end{thm}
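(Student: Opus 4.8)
The plan is to reduce both statements to homological properties of $\Omega^1(\A)$, exploiting that $\A$ is free outside points and that $\pd_S\Omega^1(\A)=1$. Write $\Omega^1=\Omega^1(\A)$, let $\frakm$ be the irrelevant maximal ideal, and fix a minimal free presentation $0\to F_1\xrightarrow{\phi}F_0\to\Omega^1\to0$; by minimality the entries of $\phi$ lie in $\frakm$. Since $\A$ is central and free outside points, its non-free locus is a $0$-dimensional cone, hence the single point $\frakm$, so $\Omega^1$ is locally free away from the origin, a locus of codimension $\ell$.

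For $0\le p\le\ell-2$ I would resolve $\bigwedge^p\Omega^1$ by the Lebelt complex attached to $\phi$, the length-$p$ complex with terms $\bigwedge^{p-i}F_0\otimes\Sym^iF_1$ whose differentials are built by contraction against $\phi$. The key point is acyclicity: because $\Omega^1$ is free off $\frakm$, this complex is split exact away from the origin, so by the Buchsbaum--Eisenbud criterion it is acyclic once the ideals of minors of its differentials have grade at least their homological degree; those ideals cut out a subscheme of the origin, so they are $\frakm$-primary of grade $\ell\ge p$ and the conditions hold. This yields $\pd_S\bigwedge^p\Omega^1\le p$. Moreover, since $\phi$ is minimal, every differential has entries in $\frakm$, so the complex is a minimal free resolution; as its top term $\Sym^pF_1$ is nonzero (here the hypothesis that $\chr\KK$ is $0$ or exceeds $\ell$ enters, so symmetric and divided powers agree and do not vanish), we get $\pd_S\bigwedge^p\Omega^1=p$ exactly.

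Next I would upgrade this to the claim about $j_p$. By the Auslander--Buchsbaum formula, $\depth_S\bigwedge^p\Omega^1=\ell-p\ge2$ when $p\le\ell-2$. Together with local freeness in codimension one, this gives Serre's condition $S_2$, so $\bigwedge^p\Omega^1$ is reflexive. As $\Omega^p(\A)$ is the reflexive hull of $\bigwedge^p\Omega^1$ by \cite[Prop.~2.2]{DS10} and $j_p$ is the canonical map into that hull, reflexivity forces $j_p$ to be an isomorphism, whence $\pd_S\Omega^p(\A)=p$.

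Finally, for $p=\ell-1$ I would use the perfect pairing \eqref{omega-products}, which identifies $\Omega^{\ell-1}(\A)=\Der(-\log\A)$ with $(\Omega^1)^\vee$ up to the free twist $\Omega^\ell(\A)$. Dualizing the presentation of $\Omega^1$ gives the exact sequence $0\to(\Omega^1)^\vee\to F_0^\vee\to F_1^\vee\to N\to0$ with $N=\Ext^1_S(\Omega^1,S)$; since $\Omega^1$ is free off the origin but not free, $N$ is nonzero of finite length, so $\pd_S N=\ell$, and reading $(\Omega^1)^\vee$ as the second syzygy of $N$ gives $\pd_S\Omega^{\ell-1}(\A)=\ell-2$. (Alternatively one invokes Proposition~\ref{prop-ext}: reflexivity of $\Der(-\log\A)$ forces $\depth\ge2$ and hence $\pd\le\ell-2$, while $\Ext^{\ell-2}_S(\Der(-\log\A),S)\cong\Ext^\ell_S(N,S)\ne0$ by local duality supplies the reverse inequality.) I expect the main obstacle to be the acyclicity and minimality of the Lebelt complex — verifying the grade hypotheses from the codimension of the non-free locus and checking that the complex does not shorten — together with the observation that reflexivity, and so the isomorphism $j_p$, persists exactly up to $p=\ell-2$ and genuinely breaks at $p=\ell-1$, where the duality with $\Omega^1$ is what pins down $\pd=\ell-2$.
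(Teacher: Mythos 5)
Your argument is correct, but note that the paper itself contains no proof of this statement: Theorem~\ref{th:MSwedges} is quoted, tombstone and all, from Musta\c{t}\u{a}--Schenck \cite{MuSc01}, whose original argument is different in kind (sheaf-theoretic, working with the sheafified modules on $\PP^{\ell-1}$ and local cohomology). What your proposal reconstructs, nearly step for step, is instead the machinery the paper develops in Section~\ref{sec-logforms} to \emph{strengthen} this very theorem. Your Lebelt complex with terms $\bigwedge^{p-i}F_0\otimes\Sym^iF_1$, together with the conclusion $\pd_S\bigwedge^p\Omega^1(\A)\le p$, is the explicit form of Lebelt's theorem (Theorem~\ref{thm:wedges}); your syzygy bookkeeping for $\Omega^1(\A)$ (reflexive, $\pd=1$, non-free only at the origin) is Proposition~\ref{prop:spherical} in the case $d=1$, since a module of projective dimension one is automatically $1$-spherical; and your passage from reflexivity of $\bigwedge^p\Omega^1(\A)$ to bijectivity of $j_p$ via the reflexive-hull description of \cite[Prop.~2.2]{DS10} is exactly how the paper deduces Corollary~\ref{cor:wedges}. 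Relative to that corollary, your route buys two extras, both handled correctly: the exact equality $\pd_S\Omega^p(\A)=p$, which you get from minimality of the Lebelt complex (the paper would instead quote the graded case of \cite[Satz~1]{Leb75}, where the inequality becomes an equality), and the boundary case $p=\ell-1$, which Corollary~\ref{cor:wedges} does not address and which you settle by identifying $\Omega^{\ell-1}(\A)$ with $(\Omega^1(\A))^\vee$ up to a twist and reading it as a second syzygy of the nonzero finite-length module $N=\Ext^1_S(\Omega^1(\A),S)$, giving $\pd=\pd_SN-2=\ell-2$; your alternative via Proposition~\ref{prop-ext} is equally valid.

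Two corrections of detail, neither fatal. First, the characteristic hypothesis (the Convention of Section~\ref{sec-logforms}) does not enter where you say: $\Sym^pF_1$ is a nonzero free module in every characteristic, so the complex never ``shortens'' for that reason. What the hypothesis $\chr\KK=0$ or $\chr\KK\ge\ell$ actually licenses is the use of symmetric rather than divided powers, i.e.\ that the $\Sym$-version of the complex is the acyclic one; this is precisely where Lebelt's hypothesis $p<\chr\KK$ lives. Second, in the Buchsbaum--Eisenbud step you should remark that the expected ranks are realized because the complex is split exact at the generic point; granted that, your grade estimate --- each ideal of minors is either the unit ideal or $\frakm$-primary of grade $\ell\ge p$ --- is exactly right.
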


\begin{exa}
The rank-$4$ arrangement $\calA$ of \cite[Example~5.3]{CDFV09} is free outside
points and 
$\pd_S\Omega^1(\calA)=2$. 
Further calculation shows that the map $\bigwedge^2\Omega^1(\calA)\to\Omega^2(\calA)$ is injective, but not an isomorphism.  Moreover, the map $j_p$ need not
be injective in general: here, for example, $\bigwedge^5\Omega^1(\A)$ is a
(nonzero) torsion module supported at the maximal ideal, while $\Omega^5(\log \A)=0$.  
\end{exa}

\begin{dfn}
For $k\in\NN$, we call the divisor $Y$ \emph{$k$-tame} if $\pd_{\calO_{X,x}}\Omega^p(\log Y)_x\le p$ for all $x\in X$ and all $0\le p\le k$.
\end{dfn}

In particular, by Theorem~\ref{th:MSwedges}, an arrangement free outside points is tame if and only if it is $1$-tame. 
Denham and Schulze in \cite[Prop.~2.9]{DS10} give the following variation:

\begin{thm}\label{th:DSwedges}
If the codimension of the non-free locus of $\calA$ is greater than $k$
and $\A$ is $(k-1)$-tame, then the map $j_p$ of \eqref{eq:E} is an isomorphism for $0\leq p< k$.\qed
\end{thm}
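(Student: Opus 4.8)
The plan is to prove Theorem~\ref{th:DSwedges} by verifying the isomorphism locally and analyzing the cokernel $E^p_Y$ of the map $j_p$ directly. The statement is local at each $x \in X$, so I would fix such a point, pass to the germ, and work over the regular local ring $R = \calO_{X,x}$. The goal is to show that $E^p_R = \coker(j_p) = 0$ for $0 \leq p < k$, given that the non-free locus has codimension greater than $k$ and that $\A$ is $(k-1)$-tame. By \cite[Prop.~2.2]{DS10}, $\Omega^p(\log Y)$ is the reflexive hull of $\bigwedge^p \Omega^1(\log Y)$, which tells us that $j_p$ is injective with cokernel supported only on the non-free locus; so the real content is a codimension count showing this cokernel must vanish.

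The first key step is to control the support of $E^p_R$. Since Saito's result guarantees $j_p$ is an isomorphism at every free point, the support of $E^p_R$ is contained in the non-free locus, whose codimension exceeds $k > p$. The second step is a depth/codimension estimate. Because $\Omega^p(\log Y)$ is the reflexive hull of $\bigwedge^p\Omega^1(\log Y)$, the cokernel $E^p_R$ measures the failure of $\bigwedge^p\Omega^1(\log Y)$ to be reflexive. I would aim to bound the projective dimension, or equivalently the depth, of $\bigwedge^p\Omega^1(\log Y)$ using the $(k-1)$-tameness hypothesis: since $\pd_R \Omega^q(\log Y) \leq q$ for $q \leq k-1$, one should be able to build up information about the exterior powers inductively. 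The natural tool is an induction on $p$, combining the short exact sequence \eqref{eq:E} at level $p$ with the tameness bounds at lower levels, tracking how the associated primes of $E^p_R$ are forced to have codimension at most some function of $p$ and $k$.

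The technical heart is relating the exterior power $\bigwedge^p\Omega^1(\log Y)$ to the reflexive module $\Omega^p(\log Y)$ through a resolution or Koszul-type argument. I expect the cleanest route uses a local cohomology or $\calExt$ characterization of reflexivity: a finitely generated module over a regular local ring is reflexive precisely when it satisfies Serre's condition $(S_2)$, and the failure of $j_p$ to be an isomorphism is detected by $\calExt^{q}_R(E^p_R, R)$ vanishing for $q \leq 1$. The $(k-1)$-tameness hypothesis, via the derived-duality machinery already set up in Remark~\ref{rem-Ext-ss} and the double-dual spectral sequence \eqref{eq:E2SS}, should convert the projective-dimension bounds on the lower $\Omega^q(\log Y)$ into the vanishing statement needed for $E^p_R$ when $p < k$. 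Concretely, a low-degree $\calExt$ vanishing combined with the support being in codimension $> k$ forces $E^p_R = 0$ by a standard grade argument (e.g. \cite[Thm.~1.1]{EHV92}, as already invoked in Lemma~\ref{lem:jac}).

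The main obstacle I anticipate is the inductive bookkeeping that passes from tameness of the $\Omega^q(\log Y)$ for $q < k$ to the reflexivity of $\bigwedge^p\Omega^1(\log Y)$ for $p < k$: the exterior powers of $\Omega^1(\log Y)$ do not form an obvious complex whose homology is controlled by the hypotheses, so establishing the right depth estimate on $\bigwedge^p\Omega^1(\log Y)$ is delicate. In particular, one must ensure that the interplay between the wedge maps $j_p$ at successive levels does not introduce spurious low-codimension support, and that the perfect pairing \eqref{omega-products} together with the reflexivity results of \cite{DS10} is enough to pin down $\depth_R \bigwedge^p\Omega^1(\log Y) \geq 2$ on the relevant locus. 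Once that depth bound is in hand, the codimension hypothesis on the non-free locus closes the argument.
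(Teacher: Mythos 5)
A preliminary remark: the paper does not prove this statement at all --- it is quoted, with the \verb|\qed| built into the statement, from \cite[Prop.~2.9]{DS10}. So your attempt has to be measured against that proof, whose key tool (Lebelt's theorem on exterior powers, reproduced in this paper as Theorem~\ref{thm:wedges}) the paper deploys only for the companion result Corollary~\ref{cor:wedges}.

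Your outline has the right skeleton --- localize, note that kernel and cokernel of $j_p$ are supported on the non-free locus, then kill them by a depth/codimension argument --- but the two pivotal steps are missing, and they are exactly where the content of the theorem lies. First, injectivity of $j_p$ does \emph{not} follow from the reflexive-hull statement of \cite[Prop.~2.2]{DS10}: the natural map $M\to M^{\vee\vee}$ kills precisely the torsion submodule of $M$, so $j_p$ is injective if and only if $\bigwedge^p\Omega^1(\log Y)$ is torsion-free, and this can genuinely fail --- the paper's own example following Theorem~\ref{th:MSwedges} exhibits an arrangement for which $\bigwedge^5\Omega^1(\A)$ is a nonzero torsion module while $\Omega^5(\log\A)=0$. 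Torsion-freeness of the exterior power is therefore something to be \emph{proved} from the hypotheses, not quoted.

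Second, to kill the cokernel $E^p$ you need an upper bound --- e.g.\ $\depth\bigl(\bigwedge^p\Omega^1(\log Y)\bigr)_x\geq 2$, or $\pd_R\bigwedge^p\Omega^1(\log Y)\leq p$, after localizing at a minimal prime of $\supp E^p$ --- and you explicitly defer this, hoping that reflexivity of $\Omega^1(\log Y)$ and the pairing \eqref{omega-products} suffice. They do not: reflexivity gives only that $\Omega^1(\log Y)$ is a second syzygy, which says nothing about its higher exterior powers, and the pairing \eqref{omega-products} lives on the reflexive hulls $\Omega^p(\log Y)$, not on $\bigwedge^p\Omega^1(\log Y)$. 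Relatedly, nothing in your outline uses the $(k-1)$-tameness hypothesis in a way that reaches the exterior power: tameness bounds $\pd$ of the modules $\Omega^q(\log Y)$, and converting that into control of $\bigwedge^p\Omega^1(\log Y)$ is precisely what the theorem asserts, so assuming such control would be circular. Note also that a ``grade argument'' cannot close this by itself: support in codimension $>k$ gives $\Ext^q_R(E^p,R)=0$ for $q\leq k$, but high grade alone never forces a module to vanish; it must be played against the complementary bound on $\bigwedge^p\Omega^1(\log Y)$. The missing chain of reasoning is: $\pd_R\Omega^1(\log Y)\leq 1$ (tameness) plus $\dim R\geq k+1$ gives $\depth\Omega^1(\log Y)\geq k$ by Auslander--Buchsbaum; hence $\Omega^1(\log Y)$ is a high syzygy, equivalently highly torsion-free since its projective dimension is finite (the equivalence quoted from \cite{Leb77} in Section~\ref{sec-logforms}); then Lebelt's Theorem~\ref{thm:wedges} yields that $\bigwedge^p\Omega^1(\log Y)$ is $2$-torsion-free with $\pd\leq p$, which kills both kernel and cokernel by the local cohomology/grade comparison you sketch. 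Even with this in place there is delicate bookkeeping at the boundary case $p=k-1$, where the torsion-freeness supplied by $1$-tameness alone falls one step short and the higher tameness hypotheses must enter; your final paragraph names this obstacle but does not resolve it. As it stands, the proposal is a plan whose two load-bearing steps are absent.
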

As a matter of fact, inspection of the proof of \cite[Prop.~2.9]{DS10}
reveals that $\calA$ need not be an arrangement but can be an
arbitrary $(k-1)$-tame divisor with non-free locus of codimension
$k+1$ or more.

\begin{prb}\label{q-E-p}
Describe in general the modules $E^p_Y$, or at least their vanishing.
\end{prb}

We now prepare the way for a different strengthening (Corollary~\ref{cor:wedges} below) of Theorem~\ref{th:MSwedges}, this
time relaxing the $1$-tameness condition while adhering to the case of
divisors free outside points. 
We first prove general statements on reflexive modules with zero-dimensional non-free locus; these involve the following technical definitions.
The first definition can be found for instance in \cite{Leb75}.

\begin{dfn}
An $R$-module $M$ is an \emph{$r$-syzygy} (of $N$) if there is an exact sequence
\[
0\to M\to P_r\to\dots\to P_1(\to N\to0)
\]
where each $P_i$ is $R$-projective.
On the other hand, $M$ is \emph{$k$-torsion free} if every $R$-regular
sequence of length $\le k$ is also $M$-regular.
\end{dfn}

Being an $r$-syzygy implies $r$-torsion freeness.
The two notions are equivalent, if $\pd_R M<\infty$ (see \cite[p.2]{Leb77}).
The next definition is due to Auslander~\cite{MPS67}:

\begin{dfn}\label{def-spherical}
An $R$-module $M$ is {\em $p$-spherical} if $\pd_R M\leq p$ and $\Ext^i_R(M,R)=0$ for $1\leq i\leq p-1$.
\end{dfn}
For example, $R/I$ is $p$-spherical if and only if $I$ is Cohen-Macaulay of codimension $p$.

\begin{prp}
Let $R$ be an $\ell$-dimensional regular ring. 
Let $M$ be a finitely generated reflexive non-free $R$-module with $c$-dimensional non-free locus, of projective dimension $d=\pd_RM$.  
Then
\begin{equation}\label{pd-ineq}
\pd_RM+\pd_RM^\vee\geq \ell-1-c.
\end{equation}
In case of equality, $M$ and $M^\vee$ are $d$- and $(\ell-d-1-c)$-spherical, respectively.
\end{prp}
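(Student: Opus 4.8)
The plan is to analyze the double-dual spectral sequence from Remark~\ref{rem-Ext-ss} applied to $M$, using the zero-dimensional-modulo-$c$ structure of the non-free locus to control which entries survive. Since $M$ is reflexive, it is a second syzygy, hence $2$-torsion free, so $\Ext^1_R(M^\vee,R)=0$ and more usefully the non-free locus being $c$-dimensional forces the higher Ext-sheaves $\Ext^i_R(M,R)$ (for $i>0$) to be supported in dimension at most $c$. Concretely, away from the non-free locus $M$ is locally free, so $\Ext^i_R(M,R)$ is supported on a set of dimension $\le c$ for every $i\ge1$; by the local duality/codimension estimate (Ischebeck, \cite[Thm.~17.1]{Mat89}) each such module can only contribute to $\Ext^j$ of its double dual in the single degree $j=\ell-c$. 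The spectral sequence \eqref{eq:E2SS} then has its $E^2$-page concentrated in the row $q=0$ and a small band of columns $p=-(\ell-c),\dots,-\ell$ coming from the supported Ext-modules, abutting to $M$ in total degree $0$.

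First I would record the two boundary facts about the resolution: $\pd_R M=d$ means the dual complex $\Hom_R(F_\bullet,R)$ has length $d$, so $\Ext^q_R(M,R)=0$ for $q>d$; and $M$ reflexive gives $E^2_{0,0}=M$ together with the vanishing of $E^2_{-1,0}$ (this is the second-syzygy/$2$-torsion-free condition). Next I would locate the single surviving off-diagonal contribution. Because the abutment is concentrated in total degree $0$ and the only entries are in row $q=0$ and in the ``supported'' columns, the relevant differential is the edge map running from $E^2_{-d,0}=\Ext^d_R(M,R)$ into the spherical-type term built from $\Ext^q_R(M,R)$ with support of dimension exactly $c$. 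Tracking total degrees $p+q$ and demanding that everything off total degree zero cancel yields a nonzero differential precisely when the lengths match, i.e. when $d+\bigl(\ell-d-1-c\bigr)$ reconstitutes the correct codimension; this is the mechanism that produces the inequality \eqref{pd-ineq}.

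For the inequality itself, I would argue by contradiction: if $\pd_R M+\pd_R M^\vee<\ell-1-c$, then both the row $q=0$ (truncated at $p=-d$) and the dual columns are too short to reach the total-degree where a nonzero differential could survive, so every off-diagonal $E^2$-entry would have to die without affecting the abutment, forcing $\Ext^{>0}_R(M,R)$ to vanish on a set of the wrong codimension and contradicting that $M$ is genuinely non-free (its non-free locus is nonempty of dimension $c$). The equality case is then a rigidity statement: when $\pd_R M+\pd_R M^\vee=\ell-1-c$ the spectral sequence degenerates so that there is exactly one surviving higher differential, an isomorphism $\Ext^d_R(M,R)\cong\Ext^{\ell}_R\bigl(\Ext^{\ell-d-1-c}_R(M,R),R\bigr)$ in the spirit of Proposition~\ref{prop-ext}; reading off the intermediate vanishing $\Ext^i_R(M,R)=0$ for $1\le i\le d-1$ gives that $M$ is $d$-spherical (Definition~\ref{def-spherical}), and the dual statement for $M^\vee$ follows symmetrically.

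The main obstacle I anticipate is bookkeeping the exact range of surviving columns and proving that the non-free locus of dimension $c$ forces the supported Ext-modules to collapse into a single cohomological degree; this is where the local duality estimate must be applied uniformly over the (possibly non-equidimensional) non-free locus, and where one must be careful that ``$c$-dimensional'' controls codimension correctly so that the numerology $\ell-1-c$ comes out right. The equality/rigidity direction is comparatively formal once the $E^2$-page is pinned down, since it is just the assertion that a spectral sequence with a single nonzero off-diagonal differential converging to $M$ in total degree $0$ must have that differential be an isomorphism, which in turn forces the spherical vanishing conditions on both $M$ and $M^\vee$.
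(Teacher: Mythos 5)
Your overall strategy is exactly the paper's: apply the double-dual spectral sequence of Remark~\ref{rem-Ext-ss}, use the support bound on $\Ext^q_R(M,R)$ for $q>0$ to confine the $E^2$-page to the row $q=0$ plus a band of columns $-\ell\le p\le c-\ell$, and compare the possible lengths of differentials with the gap between the row and the band. However, your bookkeeping of which module sits in the row $q=0$ is systematically wrong, and this is a genuine gap rather than a cosmetic slip. Since $E^2_{p,0}=\Ext^{-p}_R(\Ext^0_R(M,R),R)=\Ext^{-p}_R(M^\vee,R)$, the row is truncated at $p=-\pd_RM^\vee$, not at $p=-d$. The entry that cannot be killed when $\pd_RM+\pd_RM^\vee<\ell-1-c$ is $E^2_{-a,0}=\Ext^a_R(M^\vee,R)$ with $a=\pd_RM^\vee$: its incoming differentials vanish for degree reasons, and its outgoing differentials $d_k$ land in $E^{-a-k,k-1}_k$, which are zero because nonvanishing would require both $k-1\le d$ and $-a-k\le c-\ell$, impossible under the assumed inequality. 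The contradiction is therefore that $\Ext^a_R(M^\vee,R)$ survives to $E^\infty$ in total degree $-a\neq0$; equivalently, the whole row must vanish, making $M^\vee$, hence $M\cong M^{\vee\vee}$, free. Your version, ``forcing $\Ext^{>0}_R(M,R)$ to vanish,'' names the wrong module: the modules $\Ext^q_R(M,R)$, $q>0$, appear only inside the band entries $\Ext^{-p}_R(\Ext^q_R(M,R),R)$, and band entries can in principle cancel among themselves, so nothing about them can be read off directly. For the same reason your equality case is backwards: the degeneration argument yields $\Ext^i_R(M^\vee,R)=0$ for $0<i<\ell-d-1-c$, i.e.\ sphericality of $M^\vee$ first; sphericality of $M$ then requires rerunning the spectral sequence with $M$ replaced by $M^\vee$ (legitimate because $M^\vee$ is reflexive, non-free, has the same non-free locus, and $(M^\vee)^\vee\cong M$), and the resulting duality reads $\Ext^{\ell-d-1-c}_R(M^\vee,R)\cong\Ext^{\ell-c}_R(\Ext^d_R(M,R),R)$, not the formula you wrote.

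Two auxiliary claims are also false. First, reflexivity of $M$ does not give $\Ext^1_R(M^\vee,R)=0$: being a second syzygy ($2$-torsion free) says nothing about $\Ext^1$ of the dual (that is part of the \emph{third}-syzygy condition), and the paper's Lemma~\ref{lem:rank3} supplies a counterexample --- for a generic rank-$3$ arrangement $\calA$ of $n\ge4$ planes, $M=D^0(\calA)$ is reflexive while $\Ext^1_R(M^\vee,R)=\Ext^1_R(\Omega^1_0(\calA),R)$ is Artinian of length $\binom{n-1}{3}\neq0$. Second, Ischebeck's theorem does not concentrate the band in ``the single degree $j=\ell-c$'': for $c>0$ it only gives $\Ext^j_R(N,R)=0$ for $j<\ell-c$ when $\dim N\le c$, so the band genuinely occupies all columns $-\ell\le p\le c-\ell$; your next sentence states the band version correctly, but the two statements are inconsistent, and only the band version is usable. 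None of this breaks the underlying mechanism, which is the paper's own, but as written the proof does not go through until the roles of $M$ and $M^\vee$ are straightened out.
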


\begin{proof}
We apply the spectral sequence of Remark~\ref{rem-Ext-ss}, using our
additional hypotheses.  Since $\Ext^q_R(M,R)$ is Noetherian and
supported for $q>0$ only in dimension at most $c$, then $E^2_{p,q}=0$
unless $q=0$ or $p\leq c-\ell$. Since $M$ is reflexive,
$E^2_{0,0}$ is the abutment $M$.  
So $E^{p,q}_\infty=0$ unless $p=q=0$. 
Of course, $E_2^{p,q}=0$ for
$q>d$. Now suppose $\pd(M^\vee)=a>0$, so $E^{-a,0}_2\not =0$. 
Then all
differentials $d^{-a,0}_k$ out of $E^{-a,0}_k$ end in
$E^{-a-k,k-1}_k$. These target modules are only nonzero if $k-1\leq d$
and simultaneously $-a-k\leq c-\ell$. Summing, the targets are zero
unless (at minimum) $-a-1\leq c-\ell+d$, i.e.~$\ell-c-d-1\leq
a$. It follows that $a<\ell-c-d-1$ is impossible since it would imply
$E^{-a,0}_\infty\not =0$ which we know to be false. 

Suppose now that $a+d=\ell-c-1$.  Then
the argument above shows that $0=E^{-i,0}_\infty = E^{-i,0}_{2}$ for all
$a>i>0$. Hence $M^\vee$ is spherical. By symmetry, the same holds for
$M$. 
Indeed, one obtains in this case a duality:
\[
\Ext_R^a(M^\vee,R)=E^2_{a,0}\cong
E^2_{c-\ell,d}=\Ext_R^{\ell-c}(\Ext_R^d(M,R),R).\qedhere
\]
\end{proof}

A special case of the above proposition is worth singling out.  Suppose that $M$ is a reflexive $R$-module of projective dimension $d$, with a zero-dimensional non-free locus.  Then
\begin{equation}\label{pd-ineq-zerodim}
\pd_RM+\pd_RM^\vee\geq \ell-1,
\end{equation}
and in the case of equality, $M$ and $M^\vee$ are $d$- and $(\ell-d-1)$-spherical, respectively.
%
A partial converse is the following.

\begin{prp}\label{prop:spherical}
Let $R$ be a regular ring of dimension $\ell$.
Let $M$ be a $d$-spherical $R$-module with zero-dimensional non-free locus, where $0<d<\ell$.
Then $M$ is reflexive and $M^\vee$ is $(\ell-d-1)$-spherical with zero-dimensional non-free locus.
Moreover, $M$ and $M^\vee$ are $(\ell-d)$- and $(d+1)$-syzygies, respectively.
\end{prp}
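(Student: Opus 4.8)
\emph{Approach.} The plan is to feed the hypotheses into the double-dual spectral sequence of Remark~\ref{rem-Ext-ss}. Since every assertion is local and both $\Hom_R(-,R)$ and the formation of free resolutions commute with localization, I would first replace $R$ by the stalk at a point and assume $R$ is regular local of dimension $\ell$, with $M$ a $d$-spherical module whose non-free locus is supported at the closed point (or is empty). The engine is then
\[
E^2_{p,q}=\Ext_R^{-p}(\Ext_R^q(M,R),R)\Longrightarrow M,
\]
converging to $M$ in total degree $0$. The first step is to record the shape of the $E^2$-page: $d$-sphericality gives $\Ext^q_R(M,R)=0$ for $0<q<d$, and $\pd_RM\le d$ gives $\Ext^q_R(M,R)=0$ for $q>d$, so only the rows $q=0$ and $q=d$ survive. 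The row $q=0$ is $E^2_{-p,0}=\Ext^p_R(M^\vee,R)$. For the row $q=d$ the key point is that $N:=\Ext^d_R(M,R)$ is supported on the non-free locus of $M$, hence in dimension $0$; so $\codim_R\supp N=\ell$ and $\Ext^p_R(N,R)=0$ for $p<\ell$ by the standard grade vanishing already used in the proof of Lemma~\ref{lem:jac}. Thus the entire row $q=d$ collapses to the single corner entry $E^2_{-\ell,d}=\Ext^\ell_R(N,R)$.

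\emph{Reading off the sphericity of $M^\vee$.} With this shape the sequence is almost forced. No differential can enter the row $q=0$ (a source would sit in negative $q$), and the only differential leaving it targets the corner, namely $d^{d+1}\colon E^{d+1}_{-(\ell-d-1),0}\to E^{d+1}_{-\ell,d}$. Since each $E^\infty_{-p,0}$ with $p>0$ lies in a negative total degree, it must vanish, while the abutment forces $E^\infty_{0,0}=M$. Hence $\Ext^p_R(M^\vee,R)=0$ for all $p>0$ with $p\ne\ell-d-1$, and when $\ell-d-1>0$ the differential $d^{d+1}$ is compelled to be an isomorphism
\[
\Ext^{\ell-d-1}_R(M^\vee,R)\xrightarrow{\ \sim\ }\Ext^\ell_R(N,R).
\]
Because $M$ is non-free we have $N\ne0$, so $\Ext^\ell_R(N,R)\ne0$ by local duality; therefore $\pd_RM^\vee=\ell-d-1$ with the intermediate $\Ext$'s vanishing, i.e.\ $M^\vee$ is $(\ell-d-1)$-spherical. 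Its non-free locus is contained in that of $M$, since localization commutes with dualizing, and is therefore again zero-dimensional.

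\emph{Reflexivity, syzygies, and the main obstacle.} For the remaining claims I would use that the edge homomorphism $M=E^\infty_{0,0}\hookrightarrow E^2_{0,0}=M^{\vee\vee}$ is the natural biduality map; it is an isomorphism as soon as $(0,0)$ supports no outgoing differential to the corner, which gives $M\cong M^{\vee\vee}$, reflexivity. The two syzygy statements then follow formally by dualizing minimal free resolutions: dualizing a length-$d$ resolution $F_\bullet\to M$ and invoking $\Ext^i_R(M,R)=0$ for $0<i<d$ produces an exact sequence $0\to M^\vee\to F_0^\vee\to\cdots\to F_d^\vee\to N\to0$, which exhibits $M^\vee$ as a $(d+1)$-syzygy; applying the identical manoeuvre to the now-known $(\ell-d-1)$-spherical module $M^\vee$ exhibits $M^{\vee\vee}=M$ as an $(\ell-d)$-syzygy. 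The main obstacle is precisely the reflexivity step: the lone corner entry is the only thing that can receive a differential out of $(0,0)$, and this happens exactly in the numerically borderline situation relating $d$ and $\ell$; controlling that differential (equivalently, ruling out a nonzero map $M^{\vee\vee}\to\Ext^\ell_R(N,R)$) is the delicate point, while every other assertion is formal bookkeeping in the spectral sequence.
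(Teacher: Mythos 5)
Your argument is correct in every case it actually settles, but it takes a genuinely different route from the paper's. The paper's proof of Proposition~\ref{prop:spherical} never touches the spectral sequence: it dualizes the length-$d$ resolution \eqref{eq:Mres} to get the exact sequence $0\to M^\vee\to F_0^\vee\to\cdots\to F_d^\vee\to N\to0$, $N=\Ext^d_R(M,R)$ (this is also your syzygy step), splices it with a projective resolution of $M^\vee$ into a length-$\ell$ resolution \eqref{eq:dec} of $N$ --- which yields $\pd_RM^\vee\le\ell-d-1$ for free, where you instead invoke the local criterion $\pd_RM^\vee=\max\{i:\Ext^i_R(M^\vee,R)\ne0\}$ together with nonvanishing of the corner --- and then dualizes \eqref{eq:dec} once more: since $\Ext^i_R(N,R)=0$ for $i\ne\ell$, the bidualized complex is exact in the range needed to give both reflexivity (by comparison with \eqref{eq:Mres}) and, by d\'ecalage, sphericality of $M^\vee$. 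Your bookkeeping buys the explicit isomorphism $\Ext^{\ell-d-1}_R(M^\vee,R)\cong\Ext^\ell_R(N,R)$ and avoids the splicing; the cost is the appeal to the fact (true for Roos' spectral sequence, but not proved in the paper) that the edge map $E^\infty_{0,0}\hookrightarrow E^2_{0,0}$ is the biduality homomorphism. Your handling of the two syzygy claims is identical to the paper's.

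The ``main obstacle'' you flag is to your credit, and you should not regard your failure to close it as a gap in your method: when $d=\ell-1$ --- the only case in which $(0,0)$ can map to the corner --- the proposition is actually false, and the paper's own proof breaks at exactly the same point without acknowledging it. Concretely, let $R$ be regular local of dimension $\ell\ge2$ and $M=\frakm$ the maximal ideal: from $0\to\frakm\to R\to R/\frakm\to0$ one gets $\Ext^i_R(\frakm,R)\cong\Ext^{i+1}_R(R/\frakm,R)=0$ for $1\le i\le\ell-2$, so $\frakm$ is $(\ell-1)$-spherical with zero-dimensional non-free locus, yet $\frakm^{\vee\vee}=R\ne\frakm$; the cokernel $R/\frakm$ is precisely your corner entry $\Ext^\ell_R(N,R)$, i.e., the differential you could not rule out is genuinely nonzero. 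In the paper's argument the same failure is hidden in the comparison step: exactness of the dual of \eqref{eq:dec} at position $d+1$ requires $\Ext^{d+1}_R(N,R)=0$, which holds only when $d+1\ne\ell$. So both your proof and the paper's are valid exactly for $d\le\ell-2$, and the hypothesis ``$0<d<\ell$'' should read ``$0<d<\ell-1$''; fortunately every application in the paper stays in this range (Lemma~\ref{lem:Opseq} uses $d=1$ with $\ell\ge4$, and the theorem feeding Corollary~\ref{cor:wedges} assumes $pd<\ell-1$ with $p\ge1$).
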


\begin{proof}
By hypothesis, $M$ has a projective resolution of length $d>0$, 
\begin{equation}\label{eq:Mres}
\xymat{
0& M\ar[l] & F_0\ar[l] & F_1\ar[l]&\cdots\ar[l] & F_d\ar[l] & 0.\ar[l]
}
\end{equation}
Since $\Ext^i_R(M,R)=0$ for $0<i<d$ by hypothesis, dualizing gives an exact complex
\[
\xymat{
0\ar[r] & M^\vee\ar[r] & F_0^\vee\ar[r] &\cdots\ar[r] & F_d^\vee\ar[r] & \Ext^d_R(M,R)\ar[r] &0.
}
\]
In particular, $M^\vee$ is a $(d+1)$-syzygy.
By regularity of $R$, $\pd_R\Ext^d_R(M,R)\le\ell$, so this complex extends to a projective resolution
\begin{equation}\label{eq:dec}
\xymat@C-1em@R-15pt{
0\ar[r] & G_{\ell-d-1}\ar[r] & \cdots\ar[r] & G_0\ar[rr]\ar[dr] && F_0^\vee\ar[r] &\cdots\ar[r] & F_d^\vee\ar[r] & \Ext^d_R(M,R)\ar[r] &0,\\
&&&&M^\vee\ar[dr]\ar[ur]\\
&&&0\ar[ur]&&0
}
\end{equation}
and hence $\pd_RM^\vee\le\ell-d-1$.
Since $M$ has zero-dimensional non-free locus, $\Ext^d_R(M,R)$ is supported only at  maximal ideals, and hence
\[
\Ext^i_R(\Ext^d_R(M,R),R)=0,\quad\text{for $i\ne\ell$.}
\]
Thus, dualizing \eqref{eq:dec} and comparing with \eqref{eq:Mres} proves reflexivity of $M$.
By d\'ecalage, \eqref{eq:dec} also shows that $\Ext^i_R(M^\vee,R)=0$ except for $i=0$ and $i=\ell-d-1$, so $M^\vee$ is $(\ell-d-1)$-spherical.
By localization, it clearly has a zero-dimensional non-free locus.
Then $M^\vee$ satisfies the original hypotheses and so $M$ is an $(\ell-d)$-syzygy.
\end{proof}

For convenience, we now state a result of Lebelt \cite[Satz~1]{Leb75}.

\begin{thm}\label{thm:wedges}
Let $1\le p\in\NN$ and assume that the (not necessarily regular) ring
$R$ contains a field $\KK$ with $p<\chr\KK$ or $\chr\KK=0$.
If $M$ is an $R$-module with $\pd_RM=d<\infty$ that is
$(d(p-1)+k)$-torsion free for some $k\geq 0$, then $\bigwedge^p M$ is $k$-torsion free
and $\pd_R\bigwedge^p M\le pd$.

In the graded case, this last inequality becomes an equality.\qed
\end{thm}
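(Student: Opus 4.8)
The plan is to manufacture an explicit finite projective resolution of $\bigwedge^p M$ from a length-$d$ projective resolution $P_\bullet\to M$, and then extract both the projective-dimension bound and the torsion-freeness from its shape. Write $0\to N\to F\to M\to 0$ with $F$ free and $N=\Syz^1 M$, so $\pd_R N=d-1$. The engine is the divided-power Koszul complex
\[
L_\bullet(p):\quad \bigwedge^p F \leftarrow \bigwedge^{p-1}F\otimes N \leftarrow \bigwedge^{p-2}F\otimes\Gamma_2 N \leftarrow \cdots \leftarrow \Gamma_p N,
\]
whose degree-$i$ term is $\bigwedge^{p-i}F\otimes\Gamma_i N$ and whose augmentation is the surjection $\bigwedge^p F\twoheadrightarrow\bigwedge^p M$. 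The characteristic hypothesis enters first here: since $i!$ is invertible for $i\le p<\chr\KK$ (or $\chr\KK=0$), the natural maps $\Gamma_i N\to\Sym_i N$ are isomorphisms and the standard functorial filtration of $\bigwedge^p$ applied to a short exact sequence splits into exactly these pieces.

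Next I would run a simultaneous induction on $d$ over the exterior statement and its symmetric companion ($\pd_R\Sym_i N\le i\cdot\pd_R N$, with a matching torsion-freeness bound). Granting that $L_\bullet(p)$ is exact (the obstacle, below), I replace each $\Gamma_i N\cong\Sym_i N$ by a projective resolution of length $\le i(d-1)$ supplied by the inductive symmetric statement and totalize: the term $\bigwedge^{p-i}F\otimes\Gamma_i N$ lands in total homological degree $i+i(d-1)=id\le pd$, producing a projective resolution of $\bigwedge^p M$ of length $\le pd$, hence $\pd_R\bigwedge^p M\le pd$. The base case $d=0$ is immediate, as exterior and symmetric powers of a projective module are projective.

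The torsion-freeness is tracked through the same induction via the Auslander--Bridger theory, using that for a finitely generated module of finite projective dimension being $t$-torsion free is equivalent to Serre's condition $\widetilde S_t$, namely $\depth M_\p\ge\min(t,\depth R_\p)$ for all $\p$ (so by Auslander--Buchsbaum, $M_\p$ is in fact free whenever $\depth R_\p\le t$). The numerology $d(p-1)+k$ is forced by the syzygy shift: from $0\to N\to F\to M\to 0$ one checks $\depth N_\p\ge\min(\depth R_\p,\depth M_\p+1)$, so that $t$-torsion-freeness of $M$ upgrades to $(t+1)$-torsion-freeness of $N$; feeding $N$ (with $\pd=d-1$, torsion order $d(p-1)+k+1$) into the symmetric statement at level $d-1$ consumes $(d-1)(i-1)$ and leaves exactly enough slack to conclude that each $\Sym_i N$, and finally $\bigwedge^p M$ itself, is $k$-torsion free. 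Verifying that the residual order is precisely $k$ rather than $k\pm 1$ is the delicate bookkeeping.

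The main obstacle is exactness of $L_\bullet(p)$, which genuinely fails without the torsion-free hypothesis: already for $M=R/\m$ the complex $L_\bullet(2)$ is not acyclic for degree reasons. Thus the heart of the argument is to show that $(d(p-1)+k)$-torsion-freeness of $M$ pushes the non-free locus deep enough—relative to the length $pd$ of the totalized complex—that the Buchsbaum--Eisenbud acyclicity criterion is met on the ideals of minors, simultaneously yielding exactness and the $k$-torsion-freeness of $\bigwedge^p M$. For the final clause, in the graded case one takes $P_\bullet$ minimal; the totalized resolution has top term $\Gamma_p(P_d)\cong\Sym_p(P_d)$, which is nonzero since $P_d\ne 0$ and $p<\chr\KK$, and it is the unique contribution in total degree $pd$, so no cancellation against a neighboring term is possible and $\pd_R\bigwedge^p M=pd$.
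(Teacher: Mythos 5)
First, a point of reference: the paper does not prove this statement at all --- it is quoted, with attribution, as \cite[Satz~1]{Leb75}, which is why the theorem is stated with a tombstone and no argument. So the only meaningful comparison is with Lebelt's published proof, and your outline does in fact reproduce its skeleton: the divided-power Koszul complex attached to a syzygy sequence $0\to N\to F\to M\to 0$, the identification $\Gamma_i\cong\Sym_i$ in the allowed characteristics, a simultaneous induction on projective dimension over the exterior and symmetric statements, and a depth argument to force acyclicity. The architecture is the right one.

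As written, however, the proposal has genuine gaps, and they are exactly the two steps you yourself flag as the heart of the matter. (i) Acyclicity of $L_\bullet(p)$: you propose to obtain it from the Buchsbaum--Eisenbud exactness criterion ``on the ideals of minors,'' but that criterion applies to complexes of finitely generated \emph{free} (or projective, constant-rank) modules, where ranks and determinantal ideals of the differentials are defined; the terms $\bigwedge^{p-i}F\otimes\Gamma_i N$ are not free once $d\geq2$, so the criterion does not apply to $L_\bullet(p)$ at all. Applying it instead to the totalized complex of free modules is circular: the required depth bounds on the minors are essentially equivalent to the acyclicity being proved. What actually works --- and what constitutes the substance of Lebelt's proof --- is a localization and depth-chasing argument (Peskine--Szpiro acyclicity lemma) in which the torsion-freeness hypothesis supplies the local depth lower bounds, run jointly with the induction; none of this is carried out. (ii) The other half of the statement, that $\bigwedge^pM$ is $k$-torsion free with the precise numerology $d(p-1)+k$, is explicitly deferred as ``delicate bookkeeping,'' so it is asserted rather than proved. (iii) Finally, for the graded equality $\pd_R\bigwedge^pM=pd$: knowing that $\Gamma_p(P_d)$ is the unique contribution in total homological degree $pd$ does not rule out cancellation, because the totalized resolution need not be minimal; a trivial summand pairing the degree-$pd$ piece against the degree-$(pd-1)$ piece is excluded only if you show the top differential has all entries in the irrelevant maximal ideal, or alternatively exhibit $\Tor^R_{pd}(\bigwedge^pM,\KK)\neq0$. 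That verification is missing as well.
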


\begin{thm}
Suppose $R$ is a regular ring.
Let $M$ be a $d$-spherical $R$-module with zero-dimensional non-free
locus. 
If $\ell=\dim R\le\chr\KK$ or
$\chr\KK=0$ then $\bigwedge^pM$ is reflexive for all $p$ such that
$pd<\ell-1$.
\end{thm}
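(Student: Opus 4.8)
The plan is to obtain reflexivity of $\bigwedge^pM$ from a torsion-freeness estimate, feeding the syzygy order of $M$ supplied by Proposition~\ref{prop:spherical} into Lebelt's Theorem~\ref{thm:wedges} with the torsion-freeness parameter $k=2$. First I would dispose of degenerate cases. Since $M$ is $d$-spherical we have $\pd_RM\le d$, and replacing $d$ by $\pd_RM$ only makes the hypothesis $pd<\ell-1$ easier, so I may assume $d=\pd_RM$. If $d=0$ then $M$ is projective, hence free, and $\bigwedge^pM$ is free and reflexive; the cases $p=0,1$ are immediate since $\bigwedge^0M=R$ and $\bigwedge^1M=M$, the latter being reflexive by Proposition~\ref{prop:spherical}. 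So I assume $d\ge1$ and $p\ge2$, and note that $pd<\ell-1$ with $p\ge1$, $d\ge1$ forces $0<d\le pd<\ell-1<\ell$, so that Proposition~\ref{prop:spherical} indeed applies to $M$.

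The core is a bookkeeping of torsion-freeness. By Proposition~\ref{prop:spherical}, $M$ is an $(\ell-d)$-syzygy; since $\pd_RM<\infty$, the equivalence recorded after the definition of torsion-freeness shows that $M$ is $(\ell-d)$-torsion free. The hypothesis $pd<\ell-1$ rearranges to
\[
d(p-1)+2\le\ell-d,
\]
so $M$ is in particular $(d(p-1)+2)$-torsion free. To invoke Theorem~\ref{thm:wedges} with $k=2$ I check the characteristic condition: since $p\le pd<\ell\le\chr\KK$ (or $\chr\KK=0$) we have $p<\chr\KK$. Theorem~\ref{thm:wedges} then yields that $\bigwedge^pM$ is $2$-torsion free and, moreover, $\pd_R\bigwedge^pM\le pd<\infty$.

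It remains to upgrade $2$-torsion-freeness to reflexivity. Because $\bigwedge^pM$ has finite projective dimension, being $2$-torsion free makes it a $2$-syzygy, again by the cited equivalence. Reflexivity is a local property for finitely generated modules, so I may localize and assume $R$ is regular local, hence a normal domain. There a $2$-syzygy $0\to N\to F_1\to F_0$ into free modules is torsion free and satisfies Serre's condition $(S_2)$ by the depth lemma, and a torsion-free module satisfying $(S_2)$ over a normal domain is reflexive; hence $\bigwedge^pM$ is reflexive, as claimed.

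I expect the main obstacle to be exactly this final passage from torsion-freeness to honest reflexivity. The arithmetic tying $pd<\ell-1$ to the hypotheses of Lebelt's theorem is routine once the syzygy order $(\ell-d)$ of $M$ is in hand, and in fact the bound $pd<\ell-1$ is sharp precisely because it is what converts $(\ell-d)$-torsion-freeness of $M$ into $2$-torsion-freeness of $\bigwedge^pM$. Making the step ``$2$-torsion free $\Rightarrow$ reflexive'' rigorous, however, requires care about which ring-theoretic hypotheses ($(S_2)$, normality, finiteness of projective dimension) are genuinely invoked, and a secondary check is that the cited equivalence ``$r$-syzygy $\Leftrightarrow$ $r$-torsion free'' is applied only to modules of finite projective dimension, which is guaranteed here by the bound $\pd_R\bigwedge^pM\le pd$.
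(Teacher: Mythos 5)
Your proof is correct, and its first half coincides with the paper's: reduce to $d=\pd_RM\ge1$ and $p\ge2$, use Proposition~\ref{prop:spherical} to see that $M$ is an $(\ell-d)$-syzygy, hence $(\ell-d)$-torsion free, rewrite $pd<\ell-1$ as $d(p-1)+2\le\ell-d$, and apply Theorem~\ref{thm:wedges} with $k=2$ to conclude that $\bigwedge^pM$ is $2$-torsion free with $\pd_R\bigwedge^pM\le pd<\ell-1$. The endgame, however, is genuinely different. The paper uses the $2$-syzygy property only to embed $\bigwedge^pM$ into its reflexive hull $M^p=((\bigwedge^pM)^\vee)^\vee$, and then kills the cokernel $E^p$ of \eqref{eq-Ep} by an $\Ext$-vanishing squeeze: since $\pd_R\bigwedge^pM<\ell-1$ and $\pd_RM^p\le\ell-2$, the long exact sequence forces $\Ext^\ell_R(E^p,R)=0$, while the zero-dimensional non-free locus makes $E^p$ of finite length, so $\Ext^q_R(E^p,R)=0$ for all $q<\ell$, whence $E^p=0$. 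You instead invoke the general fact that a finitely generated $2$-syzygy over a regular local ring is reflexive, being torsion-free and $(S_2)$ by the depth lemma, with reflexivity then following from normality; this fact is correct and standard (it is the Gorenstein/regular case of the Auslander--Bridger and Evans--Griffith description of second syzygies), and your localization step is harmless since syzygies, exterior powers, and duals all commute with localization. What each route buys: yours shows that the final step needs neither the zero-dimensional non-free locus (which enters only through Proposition~\ref{prop:spherical}) nor a second use of the bound $pd<\ell-1$, so it isolates the true role of those hypotheses and proves a more general statement about $2$-syzygies; the paper's version stays entirely within the $\Ext$-vanishing and finite-length toolkit used throughout the article and explicitly identifies the obstruction module $E^p$, in the spirit of Problem~\ref{q-E-p}. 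One cosmetic slip on your side: over a regular ring that is not local, $d=0$ makes $M$ projective, hence only locally free rather than free, but reflexivity of $\bigwedge^pM$ follows all the same.
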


\begin{proof}
We assume $pd<\ell -1$, which is equivalent to $d(p-1)+2\le \ell-d$.
Since $M$ is $d$-spherical, $d=\pd_RM$. 
By Proposition~\ref{prop:spherical}, $M$ is $(\ell-d)$-torsion
free. By Theorem~\ref{thm:wedges} above, 
$\bigwedge^pM$ is
$2$-torsion free and
\begin{equation}\label{eq:pd-bound}
\pd_R\bigwedge^p M\le pd<\ell-1.
\end{equation} 
Since
$\bigwedge^pM$ is a $2$-syzygy, it is a submodule of a free module and
thus contained in its reflexive hull $M^p=((\bigwedge^pM)^\vee)^\vee$.
Of course, $\pd_R M^p\le\ell-2$. 

Then we have a short exact sequence
\begin{equation}\label{eq-Ep}
0\to\bigwedge^p M\to M^p\to E^p\to 0;
\end{equation}
since $d=\pd_R M^p\leq \ell-2$ as well, applying
$\Ext^\bullet_R(-,R)$ shows that $\Ext^\ell_R(E^p,R)=0$.
On the other hand, $M$ has 
zero-dimensional non-free locus, hence $E^p$ is finite length, which
means $\Ext^q_R(E^p,R)=0$ for $q<\ell$.
It follows that
$E^p=0$, hence $\bigwedge^pM\cong M^p$ by \eqref{eq-Ep}, which is reflexive by definition.
\end{proof}

\begin{cor}\label{cor:wedges}
Let $Y\subseteq X$ be a divisor in an $\ell$-dimensional complex manifold or smooth algebraic variety, or let $Y=\A$ be a hyperplane arrangement in $X=\AA_\KK^\ell$ with $\ell\le\chr\KK$ or $\chr\KK=0$. 
Assume that $Y$ is free outside points and that $\Omega^1(\log Y)_x$ is $d$-spherical for all $x\in X$. 
Then $\Omega^p(\log Y)=\bigwedge^p\Omega^1(\log Y)$ for $pd<\ell-1$.\qed
\end{cor}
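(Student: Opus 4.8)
The plan is to recognize that this corollary is simply the geometric translation of the theorem immediately preceding it, once the reflexive-hull description of $\Omega^p(\log Y)$ is brought to bear; the substantive work has already been done. The asserted equality $\Omega^p(\log Y)=\bigwedge^p\Omega^1(\log Y)$ of sheaves (resp.\ of graded $S$-modules in the arrangement case) may be tested on stalks, so I would fix a point $x\in X$ and pass to the regular local ring $R=\calO_{X,x}$ of dimension $\ell$; in the arrangement case these are the local rings of $\AA_\KK^\ell$, the only non-free one being at the origin. Throughout I would use that forming exterior powers and double duals commutes with localization for finitely generated modules over a Noetherian ring, so that the local computation really does detect the global equality.

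Set $M=\Omega^1(\log Y)_x$. The hypothesis that $Y$ is free outside points says exactly that the non-free locus of $M$ is zero-dimensional, and by assumption $M$ is $d$-spherical; the characteristic condition $\ell\le\chr\KK$ or $\chr\KK=0$ is precisely what the preceding theorem demands (automatic with $\chr\KK=0$ in the complex-manifold case, imposed in the arrangement case). I would therefore invoke that theorem to conclude that $\bigwedge^pM$ is reflexive for every $p$ with $pd<\ell-1$. At a free point $x$ this conclusion is immediate, since there $M$, and hence $\bigwedge^pM$, is free; the genuine content sits at the finitely many non-free points, where $d$-sphericity, Proposition~\ref{prop:spherical}, and Lebelt's Theorem~\ref{thm:wedges} supply the reflexivity.

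It then remains only to combine this with the fact recorded from \cite[Prop.~2.2]{DS10} that $\Omega^p(\log Y)$ is the reflexive hull $\big(\bigwedge^p\Omega^1(\log Y)\big)^{\vee\vee}$. Localizing at $x$ gives $\Omega^p(\log Y)_x=(\bigwedge^pM)^{\vee\vee}$, and reflexivity of $\bigwedge^pM$ forces this double dual to be $\bigwedge^pM$ itself, so the injection $j_p$ of \eqref{eq:E} is an isomorphism at $x$; as $x$ was arbitrary, $j_p$ is an isomorphism and the claim follows. I do not expect a serious obstacle: the only point needing care is the descent of the hypotheses to each stalk — verifying that ``free outside points'' yields a zero-dimensional non-free locus for each germ $M$, that $d$-sphericity is a stalkwise property, and that the two phrasings of the characteristic/dimension bound agree — all of which reduce to the standing hypotheses at non-free points and are vacuous at free points.
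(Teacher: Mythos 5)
Your proposal is correct and is essentially the paper's own (implicit) argument: the corollary carries a \qed precisely because it follows by applying the preceding theorem stalkwise to $M=\Omega^1(\log Y)_x$ and combining the resulting reflexivity of $\bigwedge^pM$ with the reflexive-hull identification $\Omega^p(\log Y)=\big(\bigwedge^p\Omega^1(\log Y)\big)^{\vee\vee}$ of \cite[Prop.~2.2]{DS10}, so that $j_p$ becomes an isomorphism. Your additional checks (localization compatibility, triviality at free points, matching of the characteristic hypotheses) are exactly the routine verifications the paper suppresses.
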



\begin{rmk}
Suppose $Y$ is a homogeneous hypersurface in affine space. If $Y$ is
not a free divisor, a natural question is whether any of the modules
$\Omega^i(\log Y)$ can be free. In dimension three or less, this is
impossible because of duality. In dimensions $4$ and $5$, the question
boils down to asking whether $\Omega^2(\log Y)$ can be free while
$\Omega^1(\log Y)$ is not. One sees this to be impossible as follows:
$\Omega^2(\log Y)$
decomposes into $\Omega^1_0(\log Y)\oplus \Omega^2_0(\log Y)$ so that
freeness of $\Omega^2(\log Y)$ implies freeness of $\Omega^1_0(\log Y)$
and hence of $\Omega^1(\log Y)$. Thus, the first open case appears for
$p=3$ and $\ell=6$.
\end{rmk}

\section{Generic arrangements}\label{sec-generic}

We consider here generic arrangements $\calA$. 
Again, we assume that the characteristic of the base field is good (Definition~\ref{dfn:goodp}).
The purpose of this section is to prove Theorem~\ref{th:generic} below. 

Recall from \cite[Def.~2.4]{DS10} that the module of \emph{relative differential $p$-forms along $\A$} is the kernel
$\Omega_0^p(\calA)$ of contraction with the Euler vector field $\chi$,
\[
\chi\colon\Omega^p(\calA)\to\Omega^{p-1}(\calA).
\]
These modules are non-zero only for $0\le p\le\ell-1$;
for $p=0$ and $p=\ell-1$ they are free  of rank $1$. 
By \cite[(2.4)]{DS10}, we can identify 
\[
\Omega^p(\A)\cong\Omega^{p-1}_0(\A)\oplus\Omega^p_0(\A).
\]
So both tameness and being free outside points descends from $\Omega^1(\calA)$
to $\Omega^{\bullet}_0(\A)$.  In particular, $\Omega_0^p(\A)$
has zero-dimensional non-free locus for $0<p<\ell-1$ if $\A$ is
generic.  Recall also the module $D^0_p(\A)=D_p(\A)/\chi\wedge
D_{p-1}(\A)$ of \emph{relative logarithmic $p$-derivations along
  $\A$} defined in \cite[Def.~3.4]{DS10}, which has the analogous
property
\[
D_p(\A)\cong D_{p-1}^0(\A)\oplus D_p^0(\A).
\]
By\cite[Prop.~3.5]{DS10}, $D^0_p(\A)$ and $\Omega^p_0(\A)$ are mutually $S$-dual.

\begin{thm}\label{th:generic}
Let $\calA$ be a generic non-Boolean rank-$\ell$ arrangement.
Then $\Omega_0^p(\calA)$ is $p$-spherical, for $0<p<\ell-1$, and $\Ext_S^p(\Omega^p_0(\calA),S)$ is Artinian of length $n-1\choose \ell$ where $n=\abs{\calA}$.
\end{thm}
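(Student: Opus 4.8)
The plan is to reduce the whole statement to the projective-dimension-one module $\Omega^1_0(\calA)$ together with Lebelt's theorem, by identifying $\Omega^p_0(\calA)$ with an exterior power. First I would observe that a generic non-Boolean arrangement ($\ell\ge3$) is free outside the origin: near a point of a rank-$p$ flat with $p<\ell$ only the $p$ hyperplanes through that flat are present, and in general position these form a free (normal-crossing) arrangement. At the origin $\pd_S\Omega^1(\calA)=1$ by \cite[Cor.~7.7]{Zie89}, so $\Omega^1(\calA)$ is $1$-spherical everywhere with zero-dimensional non-free locus. Corollary~\ref{cor:wedges} then gives $\Omega^p(\calA)=\bigwedge^p\Omega^1(\calA)$ for $0<p<\ell-1$. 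Feeding this into the Euler splitting $\Omega^p(\calA)\cong\Omega^{p-1}_0(\calA)\oplus\Omega^p_0(\calA)$ of \cite[(2.4)]{DS10}, expanding $\bigwedge^p(\calO\oplus\Omega^1_0(\calA))\cong\bigwedge^p\Omega^1_0(\calA)\oplus\bigwedge^{p-1}\Omega^1_0(\calA)$, and cancelling by induction (the decompositions agree, both being cut out by contraction with $\chi$), I obtain the key identification $\Omega^p_0(\calA)\cong\bigwedge^p\Omega^1_0(\calA)$ for $0<p<\ell-1$. Since $\pd_S\Omega^1_0(\calA)=1$, Theorem~\ref{thm:wedges} yields $\pd_S\Omega^p_0(\calA)\le p$, with equality in the graded setting.

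To upgrade this bound to full $p$-sphericity I would invoke the perfect pairing \eqref{omega-products}. Restricted to the relative summands it identifies $\big(\Omega^p_0(\calA)\big)^\vee$, up to a degree shift, with $\Omega^{\ell-1-p}_0(\calA)$ (equivalently, $\big(\Omega^p_0(\calA)\big)^\vee=D^0_p(\calA)$ by the $S$-duality of \cite[Prop.~3.5]{DS10}). The same reduction applies since $0<\ell-1-p<\ell-1$, so $\Omega^{\ell-1-p}_0(\calA)\cong\bigwedge^{\ell-1-p}\Omega^1_0(\calA)$ and Theorem~\ref{thm:wedges} gives $\pd_S\big(\Omega^p_0(\calA)\big)^\vee=\ell-1-p$. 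Therefore $\pd_S\Omega^p_0(\calA)+\pd_S\big(\Omega^p_0(\calA)\big)^\vee=\ell-1$, which is exactly equality in \eqref{pd-ineq-zerodim} for the reflexive module $M=\Omega^p_0(\calA)$ (reflexive by \cite[Prop.~2.2]{DS10}, being a summand of a reflexive module, and with zero-dimensional non-free locus). The equality clause of \eqref{pd-ineq-zerodim} then says precisely that $\Omega^p_0(\calA)$ is $p$-spherical; in particular $\Ext^i_S(\Omega^p_0(\calA),S)=0$ for $0<i<p$, and the single surviving higher Ext-module $\Ext^p_S(\Omega^p_0(\calA),S)$ is supported only at the origin, hence Artinian.

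It remains to compute the length, and I expect this to be the main obstacle. My approach is to make the resolution explicit: from the known minimal graded resolution $0\to F_1\xrightarrow{\phi}F_0\to\Omega^1_0(\calA)\to0$ of a generic arrangement (Ziegler~\cite{Zie89}, Rose--Terao~\cite{RT91}), the exterior-power complex underlying Theorem~\ref{thm:wedges}, with terms $\bigwedge^{p-i}F_0\otimes S^iF_1$, is a length-$p$ graded free resolution of $\Omega^p_0(\calA)=\bigwedge^p\Omega^1_0(\calA)$; dualizing and taking top cohomology computes $\Ext^p_S(\Omega^p_0(\calA),S)$, whose length should emerge as an alternating sum of graded Betti numbers (equivalently, via local duality, as $\sum_k\dim_\KK H^{\ell-p-1}\big(\PP^{\ell-1},\bigwedge^p\widetilde{\Omega^1_0(\calA)}(k)\big)$, the local cohomology of the title). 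The genuinely hard point is that this length equals $\binom{n-1}{\ell}$ \emph{independently of $p$}: the dualized complex is far from minimal, so there is heavy cancellation and one must pin down the cohomology exactly rather than merely its support. The duality in the equality case of \eqref{pd-ineq-zerodim} gives $\dim_\KK\Ext^p_S(\Omega^p_0(\calA),S)=\dim_\KK\Ext^{\ell-1-p}_S(\Omega^{\ell-1-p}_0(\calA),S)$, which halves the range but does not explain the constancy; I would therefore organize the final count around the combinatorics of the uniform intersection lattice, where $\binom{n-1}{\ell}$ appears naturally, reducing $p$-independence to a binomial identity and using the base case $\Ext^1_S(\Omega^1_0(\calA),S)=\coker\phi^\vee$ as both the seed and a numerical check.
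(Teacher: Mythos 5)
Your first half --- $p$-sphericity and the Artinian property of $\Ext^p_S(\Omega^p_0(\calA),S)$ --- is correct, and it is a genuinely different argument from the paper's. The paper proves the whole theorem at once by induction on $n$ via deletion--restriction: Lemmas~\ref{lem:wiens}, \ref{lem:shift}, \ref{lem:Opseq} and \ref{lem:mapseq} break the long exact $\Ext$-sequences into the short exact sequences \eqref{eq:indp} and \eqref{eq:indptop}, with the rank-$3$ computation (Lemma~\ref{lem:rank3}) as the base case. You instead deduce sphericity directly from the Section~\ref{sec-logforms} machinery: $\pd_S\Omega^1_0(\calA)=1$ by Ziegler, Corollary~\ref{cor:wedges} plus graded Krull--Schmidt cancellation give $\Omega^p_0(\calA)\cong\bigwedge^p\Omega^1_0(\calA)$, Lebelt's theorem bounds $\pd_S\Omega^p_0\le p$ and $\pd_S\bigl(\Omega^p_0\bigr)^\vee\le\ell-1-p$, and the lower bound \eqref{pd-ineq-zerodim} then forces both bounds to be equalities, so the equality clause yields $p$-sphericity. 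Two points you should make explicit: the identification $\bigl(\Omega^p_0\bigr)^\vee\cong\Omega^{\ell-1-p}_0(\text{shift})$ needs the relative version of the pairing \eqref{omega-products} from \cite{DS10} (Prop.~3.5 alone gives only $\bigl(\Omega^p_0\bigr)^\vee=D^0_p$), and the characteristic hypotheses of Corollary~\ref{cor:wedges} and Theorem~\ref{thm:wedges} are indeed available here, because for a generic arrangement every flat of rank $k<\ell$ lies on exactly $k$ hyperplanes, so good characteristic (Definition~\ref{dfn:goodp}) already forces $\chr\KK=0$ or $\chr\KK\ge\ell$.

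The genuine gap is the length claim. You only describe a strategy (``organize the final count around the combinatorics\dots reducing $p$-independence to a binomial identity'') without ever exhibiting the computation or the identity, so $\dim_\KK\Ext^p_S(\Omega^p_0(\calA),S)=\binom{n-1}{\ell}$ is asserted, not proved. Note also that your stated obstacle --- ``the dualized complex is far from minimal, so there is heavy cancellation'' --- is not the real difficulty: once $p$-sphericity is established, the dual of the Lebelt resolution is exact except at its two ends, where it computes $\bigl(\bigwedge^p\Omega^1_0\bigr)^\vee\cong\bigwedge^{\ell-1-p}\Omega^1_0(\text{shift})$ and $\Ext^p_S(\Omega^p_0,S)$; hence the Hilbert series of $\Ext^p$ is a literal Euler characteristic, and the remaining work is bookkeeping (the graded form of Ziegler's resolution in rank $\ell$, the precise shift in the relative duality) followed by a binomial identity whose $p$-independence is exactly the content of the claim. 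Contrast this with how cheaply the paper gets the number: the sequences \eqref{eq:indp} and \eqref{eq:indptop} give length additivity between $\calA$, $\calA'$ and $\calA''$, so the length obeys Pascal's recursion and equals $\binom{n-2}{\ell}+\binom{n-2}{\ell-1}=\binom{n-1}{\ell}$ by induction from Lemma~\ref{lem:rank3}. If you want to keep your non-inductive route, you must actually carry out the generating-function calculation; Lemma~\ref{lem:Qseries} tells you what you must arrive at, namely \eqref{eq:Qseries}, whose specialization at $t=1$ is visibly independent of $p$.
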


The proof is by induction on $n$. 
We first establish some lemmas in order to proceed by means of a deletion-restriction argument.

\begin{dfn}
For an arrangement $\calA$ and a hyperplane $H\in\calA$ we denote by
$\calA'$ the arrangement $\calA\minus\{H\}$ and by $\calA''$ the
arrangement induced by $\calA'$ on $H$.
\end{dfn}

Now fix $H\in\A$ with defining equation $\alpha_H$.
From \cite[Prop.~4.45]{OTbook} and Ziegler \cite[Cor.~4.5]{Zie89}, there are sequences of $S$-modules
\begin{gather}
\label{eq:Dseq}\xymat{
0\ar[r] & D(\calA')(-1)\ar[r]^-{\alpha_H} & D(\calA)\ar[r] & D(\calA'')\ar@{-->}[r] & 0},\\
\label{eq:Omseq}\xymat{
0\ar[r] & \Omega^p(\calA)(-1)\ar[r]^-{\alpha_H} & \Omega^p(\calA')\ar[r] & \Omega^p(\calA'')\ar@{-->}[r] & 0},
\end{gather}
which are exact except possibly at $D(\calA'')$ and $\Omega^p(\calA')$.
Although Ziegler formulates this result only for $p=1$, it holds true in general.

\begin{lem}\label{lem:wiens}
If $\calA$ is non-Boolean generic then there are short exact sequences
\begin{gather}
\label{eq:wiensDer}\xymat@R=0pt{
0\ar[r] & D^0(\calA')(-1)\ar[r]^-{\alpha_H} & D^0(\calA)\ar[r] & D^0(\calA'')\ar[r] & 0,}\\
\label{eq:wiensOmega}\xymat@R=0pt{
0\ar[r] & \Omega_0^{\ell-2}(\calA')\ar[r]^-{\alpha_H} & \Omega_0^{\ell-2}(\calA)\ar[r] & 
\Omega_0^{\ell-3}(\calA'')\ar[r] & 0.
}\qed
\end{gather}
\end{lem}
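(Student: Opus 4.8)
The plan is to reduce both displayed sequences to a single short exact sequence of \emph{relative} derivation modules, and to prove that one by comparison with the Ziegler deletion--restriction sequence \eqref{eq:Dseq}. First I would record that, for a central arrangement, the radial field $\chi=\sum_i x_i\partial_i$ serves as a common Euler field for $\calA$, $\calA'$, $\calA''$: it restricts to the radial field $\chi''$ on $H$, and $\alpha_H\cdot\chi\in S\chi$. Combining the identification $\Der(-\log Y)\cong\Omega^{\ell-1}(\log Y)$ recorded in the introduction with the Euler decompositions $D(\calA)\cong S\chi\oplus D^0(\calA)$ and $\Omega^{\ell-1}(\calA)\cong\Omega^{\ell-2}_0(\calA)\oplus\Omega^{\ell-1}_0(\calA)$ yields a natural isomorphism $\Omega^{\ell-2}_0\cong D^0$ (and, in ambient dimension $\ell-1$, $\Omega^{\ell-3}_0(\calA'')\cong D^0(\calA'')$). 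Granting that this isomorphism intertwines the deletion and restriction maps on the two sides, the sequence \eqref{eq:wiensOmega} is precisely the image of \eqref{eq:wiensDer}, so it is enough to prove \eqref{eq:wiensDer}.

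To prove \eqref{eq:wiensDer} I would run a snake-lemma comparison. The Euler line contributes the Koszul-exact top row
\[
0\to S\chi(-1)\xrightarrow{\alpha_H}S\chi\to S''\chi''\to 0,\qquad S''=S/(\alpha_H),
\]
and the split inclusions $S\chi\hookrightarrow D(\calA)$, taken compatibly for $\calA$, $\calA'$, $\calA''$, define a morphism from this row to the sequence \eqref{eq:Dseq} (which is exact except possibly at $D(\calA'')$). The three vertical maps are injective with cokernels $D^0(\calA')(-1)$, $D^0(\calA)$, and $D^0(\calA'')$, so the cokernel complex is exactly the candidate sequence \eqref{eq:wiensDer}. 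Since the top row is exact, the associated long exact homology sequence identifies the homology of the cokernel complex with that of \eqref{eq:Dseq}. Hence \eqref{eq:wiensDer} is automatically injective on the left and exact in the middle, and its only possible defect is on the right, where its cokernel is $\coker\big(D(\calA)\to D(\calA'')\big)$.

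Everything then comes down to surjectivity of the restriction $D(\calA)\to D(\calA'')$ for non-Boolean generic $\calA$. Here I would invoke Ziegler's identification \cite[Cor.~4.5]{Zie89} of the image of this map with the derivation module $D(\calA'',m^*)$ of the Ziegler multirestriction, whose multiplicity is $m^*(Z)=\abs{\set{K\in\calA\colon Z\subseteq K}}-1$ for $Z\in\calA''$. Because $\calA$ is generic, every codimension-two flat lies on exactly two hyperplanes, so $m^*\equiv1$ and the multiarrangement is the simple arrangement $\calA''$; thus the image is all of $D(\calA'')$, the cokernel vanishes, and \eqref{eq:wiensDer} is short exact. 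Genericity is inherited by $\calA'$ and $\calA''$, so the same argument remains available throughout the induction on $\abs{\calA}$.

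The main obstacle is exactly this surjectivity: the sequence \eqref{eq:Dseq} is not right-exact in general, and the point is that its obstruction---the Ziegler multiplicity, supported on flats meeting three or more hyperplanes---is annihilated by genericity. The remaining ingredients are formal: the snake-lemma bookkeeping is routine, while the identification $\Omega^{\ell-2}_0\cong D^0$ is natural but demands care in matching the multiplication-by-$\alpha_H$ and restriction maps. In particular I would double-check the grading twists there, since the twist in $\Der(-\log Y)\cong\Omega^{\ell-1}(\log Y)$ depends on $\deg f$ and changes by one when passing between $\calA$ and $\calA'$.
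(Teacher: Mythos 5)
Your two reduction steps are fine and in fact coincide with the paper's own proof: the paper deduces \eqref{eq:wiensDer} from exactness of \eqref{eq:Dseq} at $D(\calA'')$ by ``dividing out $S\chi$'' (your snake-lemma comparison with the Koszul row $0\to S\chi(-1)\to S\chi\to S''\chi''\to 0$ is a correct elaboration of that phrase), and it gets \eqref{eq:wiensOmega} from \eqref{eq:wiensDer} via the identification $D^0(\calA)=\Omega_0^{\ell-2}(\calA)(\ell-n)$ of \cite[Prop.~3.7.(3)]{DS10}; the twists do cancel, as you suspected, because $\abs{\calA'}=n-1$ while $\calA''$ has rank $\ell-1$.

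The gap is in the one step that carries all the content: surjectivity of $D(\calA)\to D(\calA'')$. There is no theorem of Ziegler identifying the image of this map with $D(\calA'',m^*)$; the reference \cite[Cor.~4.5]{Zie89} concerns the sequence \eqref{eq:Omseq}, and the multirestriction results are in Ziegler's other 1989 paper, where what is proved is only a \emph{containment}: the restriction of $D_H(\calA)=\set{\theta\in D(\calA)\colon\theta(\alpha_H)=0}$ lands in $D(\calA'',m^*)$. Equality is a very strong condition. By Yoshinaga's freeness criterion, for an arrangement that is locally free along $H$ (as generic arrangements are) and of rank at least $4$, surjectivity of $D_H(\calA)\to D(\calA'',m^*)$ is \emph{equivalent} to freeness of $\calA$ --- so your claimed identification fails exactly for the non-free generic arrangements the lemma is about. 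One can also see the failure by hand in rank $3$: for four generic planes in $\CC^3$, one has $m^*\equiv1$, so $D(\calA'',m^*)=D(\calA'')$ contains the Euler field $\chi''$, whose coefficients are linear; but the only derivations in $D(\calA)$ with linear coefficients are the multiples of $\chi$, none of which annihilate $\alpha_H$, so the degree-preserving map $D_H(\calA)\to D(\calA'')$ misses $\chi''$ entirely. The full map $D(\calA)\to D(\calA'')$ \emph{is} surjective --- the summand $S\chi$, with $\chi\mapsto\chi''$, supplies exactly what $D_H(\calA)$ misses --- but this is a genuine theorem about generic arrangements, namely Wiens \cite[Thm.~3.4]{Wi01}, and citing it is precisely the paper's proof. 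As written, your argument assumes the conclusion at its only nontrivial point; it is repaired by replacing the appeal to ``Ziegler's identification'' with Wiens's theorem (or with an independent proof of surjectivity for generic $\calA$).
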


\begin{proof}
By Wiens~\cite[Thm.~3.4]{Wi01}, \eqref{eq:Dseq} is exact at $D(\calA'')$.
By dividing out $S\chi$, this proves exactness of the first sequence.
Then the second sequence is obtained via the identification $D^0(\calA)=\Omega_0^{\ell-2}(\calA)(\ell-n)$ from \cite[Prop.~3.7.(3)]{DS10}.
\end{proof}

\begin{lem}\label{lem:shift}
Suppose $\alpha$ is a non-zerodivisor in a ring $R$.
If $M$ is a module over $R''=R/R\alpha$, then
\begin{equation}\label{eq:shift}
\Ext_{R''}^q(M,R'')
\cong\Ext_R^{q+1}(M,R)
\end{equation}
for $q\geq0$, and $\Ext_R^0(M,R)=0$. If $R$ is graded and $\alpha$ 
homogeneous, the isomorphism becomes graded after twisting the left
hand side by $\deg(\alpha)$.
\end{lem}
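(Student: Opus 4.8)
The statement is the graded/ungraded change-of-rings isomorphism $\Ext_{R''}^q(M,R'')\cong\Ext_R^{q+1}(M,R)$, where $R''=R/R\alpha$ and $\alpha$ is a non-zerodivisor. The plan is to extract this from the Rees change-of-rings spectral sequence, or more directly from the long exact sequence attached to the short exact sequence $0\to R\xrightarrow{\alpha}R\to R''\to 0$. I would start by observing that since $\alpha$ is a non-zerodivisor, this is a free resolution of $R''$ of length one, so that $\Ext_R^0(M,R)=\Hom_R(M,R)=0$: indeed, every element of $M$ is killed by $\alpha$, while multiplication by $\alpha$ is injective on $R$, forcing any homomorphism $M\to R$ to vanish. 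This disposes of the last assertion and is the anchor for the degree shift.

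For the main isomorphism, the cleanest route is the spectral sequence $\Ext^p_{R''}(M,\Ext^q_R(R'',R))\Rightarrow\Ext^{p+q}_R(M,R)$, valid because $M$ is an $R''$-module. Since $R''$ has the two-term free $R$-resolution $0\to R\xrightarrow{\alpha}R\to R''\to 0$, dualizing shows $\Ext^q_R(R'',R)=0$ for $q\neq 1$ and $\Ext^1_R(R'',R)\cong R''$ (the cokernel of $\alpha$ acting on $R$). Thus the spectral sequence collapses: only the row $q=1$ survives, yielding $\Ext^p_{R''}(M,R'')\cong\Ext^{p+1}_R(M,R)$ for all $p\ge 0$, which is exactly \eqref{eq:shift} after reindexing $p\mapsto q$. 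Alternatively, one can avoid spectral-sequence machinery entirely: take a projective $R''$-resolution $P_\bullet$ of $M$, lift it to $R$, and splice in the length-one resolution of each $R''$-free module; dualizing over $R$ and comparing with the dual over $R''$ produces the shift by a direct diagram chase. I expect the main technical point to be verifying that $\Ext^1_R(R'',R)\cong R''$ as an $R''$-module with the correct grading, since it is this identification that controls both the vanishing and the degree of the twist.

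For the graded refinement, I would track the internal degrees throughout. The presentation $0\to R(-\deg\alpha)\xrightarrow{\alpha}R\to R''\to 0$ is a map of graded modules once the source is twisted by $\deg\alpha$, so that $\Ext^1_R(R'',R)\cong R''(\deg\alpha)$ as a graded $R''$-module. Feeding this twist through the collapsed spectral sequence (or through the spliced resolution) shows that the isomorphism \eqref{eq:shift} holds after twisting the left-hand side by $\deg\alpha$, as claimed. The only care needed here is bookkeeping of the sign and direction of the twist; everything else is formal and follows from the fact that $\alpha$ is a homogeneous non-zerodivisor, so that no further hypotheses on $R$ beyond those stated are required.
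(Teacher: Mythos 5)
Your proposal is correct and follows essentially the same route as the paper's proof: both dualize the length-one free resolution $0\to R\xrightarrow{\alpha}R\to R''\to 0$ to get $\Ext^q_R(R'',R)=0$ for $q\neq1$ and $\Ext^1_R(R'',R)\cong R''$, then collapse the change-of-rings spectral sequence $\Ext^p_{R''}(M,\Ext^q_R(R'',R))\Rightarrow\Ext^{p+q}_R(M,R)$ along its single nonzero row $q=1$. Your explicit verification that $\Hom_R(M,R)=0$ and your degree bookkeeping for the graded twist are fine additions, but they do not change the argument.
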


\begin{proof}
As an $R$-module, $R''$ has a free resolution
\[
\xymat{
0\ar[r] & R\ar[r]^-\alpha & R \ar[r] & R''\ar[r] & 0.
}
\]
So $\Ext^q_R(R'',R)=0$ unless $q=1$, in which case we get $\Ext_R^1(R'',R)=R''$. 
The change-of-rings spectral sequence 
\[
E^{p,q}_2=\Ext_{R''}^p(M,\Ext^q_R(R'',R))\Rightarrow\Ext_R^{p+q}(M,R)
\]
has only one nonzero row, $q=1$, from which the result follows.
\end{proof}

\begin{lem}\label{lem:Opseq}
If $\calA$ is generic non-Boolean of rank $\ell\ge4$ then there is a short exact sequence 
\begin{equation}\label{eq:Opseq}
\xymat{
0\ar[r] & \Omega_0^p(\calA)(-1)\ar[r]^-{\alpha_H} & \Omega_0^p(\calA')\ar[r] & \Omega_0^p(\calA'')\ar[r] & 0
}
\end{equation}
for $0\leq p\leq \ell-3$.  
\end{lem}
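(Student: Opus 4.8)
The plan is to obtain \eqref{eq:Opseq} from the full Ziegler sequence \eqref{eq:Omseq} by separating off the Euler part, and to run the resulting diagram through the nine lemma with an induction on $p$. Choose coordinates with $H=\{x_\ell=0\}$, and let $\chi=\sum_{i=1}^\ell x_i\frac{\del}{\del x_i}$ be the Euler field; on $H$ it restricts to the Euler field of $\calA''$. For each $\calB\in\{\calA,\calA',\calA''\}$ the module $\Omega_0^p(\calB)$ is by definition the kernel of contraction $\iota_\chi\colon\Omega^p(\calB)\to\Omega^{p-1}(\calB)$, and since $n=\abs{\calA}$ is a unit in good characteristic, the assignment $\eta\mapsto\frac1n\frac{\diff f}{f}\wedge\eta$ splits $\iota_\chi$; hence
\[
0\to\Omega_0^p(\calB)\to\Omega^p(\calB)\xrightarrow{\iota_\chi}\Omega_0^{p-1}(\calB)\to0
\]
is short exact. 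As $\iota_\chi$ is $\calO$-linear it commutes with multiplication by $\alpha_H$, and by the compatibility of the two Euler fields it commutes with the Ziegler restriction as well. Therefore \eqref{eq:Omseq}, together with these contraction sequences, assembles into a commutative diagram whose columns are the (short exact) contraction sequences for $\calA(-1)$, $\calA'$ and $\calA''$, and whose three rows are the deletion--restriction sequences for $\Omega_0^p$, $\Omega^p$ and $\Omega_0^{p-1}$.

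I would then induct on $p$ for $0\le p\le\ell-3$. The base case $p=0$ is the structure sequence $0\to S(-1)\xrightarrow{\alpha_H}S\to\calO_H\to0$ of the hyperplane $H$, which is short exact. For the step, the bottom row (for $\Omega_0^{p-1}$) is short exact by the inductive hypothesis; granting that the middle row (the full sequence \eqref{eq:Omseq}) is short exact, the nine lemma then forces the top row---which is precisely \eqref{eq:Opseq}---to be short exact, since the columns and the middle and bottom rows are all short exact. The surjectivity that this produces can also be checked directly: a class in $\Omega_0^p(\calA'')$ lifts to some $\omega'\in\Omega^p(\calA')$ because the Ziegler restriction is onto, and $\omega'$ can be corrected by an element of $\alpha_H\,\Omega^p(\calA)(-1)$, chosen using the inductive hypothesis applied to $\iota_\chi\omega'$, so as to land in $\Omega_0^p(\calA')$. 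The hypothesis $p\le\ell-3$ keeps $\Omega_0^p(\calA'')$ in the nonzero range for the rank-$(\ell-1)$ arrangement $\calA''$ and allows the induction to close.

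The only nonformal point, and the main obstacle, is the short exactness of the middle row, i.e.\ of \eqref{eq:Omseq} itself, for generic $\calA$. By the remarks accompanying \eqref{eq:Omseq}, injectivity of $\alpha_H$ and surjectivity of the restriction onto $\Omega^p(\calA'')$ hold unconditionally, so only exactness at the middle term $\Omega^p(\calA')$ remains; this is the genericity-dependent input and is the degree-$p$ counterpart of the generic deletion--restriction exactness of Wiens \cite[Thm.~3.4]{Wi01} used (for $p=1$, under duality) in Lemma~\ref{lem:wiens}. I would establish it either by invoking Wiens's exactness in all degrees for generic arrangements, or by dualizing the $D_p$ deletion--restriction sequence and observing that the only obstruction is a finite-length module whose vanishing in the range $p\le\ell-3$ follows from the known homological dimensions of generic arrangements. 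Everything else---the contraction splitting, the commutativity of the diagram, and the nine-lemma deduction---is routine.
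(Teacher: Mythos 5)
Your diagram---contraction columns split by the Euler form, deletion--restriction rows, nine lemma, induction on $p$---is formally sound, and the part of it that extracts left and middle exactness of \eqref{eq:Opseq} from \eqref{eq:Omseq} is essentially the paper's own step of ``passing to the kernels of $\chi$''. The genuine gap is that you have located the hard part in the wrong place. You assert that surjectivity of the restriction $\Omega^p(\calA')\to\Omega^p(\calA'')$ is unconditional, and that only exactness at the middle of \eqref{eq:Omseq} needs an argument. The situation is exactly the reverse. Exactness at $\Omega^p(\calA')$ holds for \emph{every} arrangement and every $H$: writing $f=\alpha_Hf'$, if $\omega\in\Omega^p(\calA')$ restricts to zero on $H$, then $\mu=f'\omega$ is a regular form vanishing on $H$, hence $d\alpha_H\wedge\mu\in\alpha_H\Omega^{p+1}$, and a direct check of the two defining conditions gives $\omega/\alpha_H\in\Omega^p(\calA)$. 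Surjectivity, by contrast, is precisely the genericity- and rank-dependent content of the lemma, and it can fail: for $\calA$ four generic planes in $\KK^3$, $\Omega^1(\calA')$ is free on the degree-zero forms $d\alpha_i/\alpha_i$, so the image of restriction is generated in degree $0$, while $\Omega^1(\calA'')$ (three concurrent lines $uv(u+v)$ in $H$) has the minimal generator $(v\,du-u\,dv)/\bigl(uv(u+v)\bigr)$ of degree $-1$. This failure is exactly why Lemma~\ref{lem:rank3} produces a nonzero $\Ext^1$ and why the present lemma assumes $\ell\ge4$. (You were misled by the remark following \eqref{eq:Omseq}: the failure locus stated there should be $\Omega^p(\calA'')$, not $\Omega^p(\calA')$; the paper's own proof only parses with that correction.) Consequently the middle row of your nine-lemma diagram cannot be fed in as known: granting its short exactness begs the question, since, with the exact columns and the inductive bottom row in hand, short exactness of \eqref{eq:Omseq} is \emph{equivalent} to the statement being proved.

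Neither of your proposed repairs closes this hole. Wiens's theorem concerns the derivation sequence \eqref{eq:Dseq}, i.e.\ surjectivity onto $D(\calA'')$; there is no off-the-shelf ``Wiens in all degrees'' for forms. Dualizing the $D_p$-sequence requires vanishing of $\Ext^1_S(D^0_p(\calA),S)$, which for $p\ge2$ is (the dual of) the sphericity statement of Theorem~\ref{th:generic}---the very theorem being proved by induction on this lemma---so that route is circular. The paper instead proves surjectivity once, at $p=1$: Ziegler's resolution makes $\Omega_0^1(\calA)$ $1$-spherical with zero-dimensional non-free locus, Proposition~\ref{prop:spherical} then makes $D^0(\calA)$ $(\ell-2)$-spherical, so $\Ext^1_S(D^0(\calA),S)=0$ (this is where $\ell\ge4$ enters), and dualizing Wiens's sequence \eqref{eq:wiensDer} via Lemmas~\ref{lem:shift} and~\ref{lem:mapseq} yields \eqref{eq:Opseq} for $p=1$ in full. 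Surjectivity for $2\le p\le\ell-3$ is then propagated by applying the right-exact functor $\bigwedge^p$ to the $p=1$ surjection, using the identifications $\bigwedge^p\Omega_0^1(\calA')\cong\Omega_0^p(\calA')$ and $\bigwedge^p\Omega_0^1(\calA'')\cong\Omega_0^p(\calA'')$ valid for generic arrangements. That wedge-power step, or some substitute for it, is what your argument is missing.
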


\begin{proof}
We first establish the case $p=1$. 
From Ziegler's presentation of $\Omega^1(\calA)$ of a generic arrangement \cite[Cor.~7.7]{Zie89}, we have $\pd_S\Omega^1_0(\calA)=\pd_S\Omega^1(\calA)=1$.  So
$\Omega^1_0(\calA)$ is $1$-spherical with zero-dimensional non-free
locus.  It follows that its dual, $D^0(\calA)$, is
$(\ell-2)$-spherical, by Proposition~\ref{prop:spherical}.  Since $\ell-2>1$,
we have $\Ext_R^1(D^0(\calA),R)=0$.  By
Lemma~\ref{lem:shift}, we have
\[
\Ext_R^1(D^0(\A''),R)=\Hom_{R''}(D^0(\A''),R'')(1),\quad R''=R/R\alpha_H.
\]
Recall that $D^0_\bullet(\A)$ and $\Omega^\bullet_0(\A)$ are 
mutually $R$-dual by \cite[Prop.~3.5]{DS10}.
Lemma~\ref{lem:mapseq} below gives exactness of \eqref{eq:Opseq} for $p=1$.

For general $p$, we proceed as follows.  As both multiplication by
$\alpha_H$ and restriction to $H$ commute with contraction against
$\chi$, passing to the kernels of $\chi$ in \eqref{eq:Omseq} gives
exactness of \eqref{eq:Opseq}, except at the right module.

Since $\A'$ and $\A''$ are again generic, they are both free outside points
and tame.  So we have $\bigwedge^p\Omega_0^1(\calA')\cong\Omega^p_0(\calA')$ for $p\leq \ell-2$ and $\bigwedge^p\Omega_0^1(\calA'')\cong \Omega^p_0(\calA'')$ for $p\leq\ell-3$, by \cite[Prop.~2.9]{DS10}.
%
Now by surjectivity in case $p=1$ and right-exactness of $\bigwedge^p$, \eqref{eq:Opseq} is exact on the right as well.
\end{proof}


\begin{lem}\label{lem:mapseq}
The sequence \eqref{eq:Opseq} for $p=1$ is obtained by dualizing \eqref{eq:wiensDer} and applying \eqref{eq:shift} for $M=D^0(\A'')$ and $q=0$. 
\end{lem}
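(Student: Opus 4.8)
The plan is to verify that the short exact sequence \eqref{eq:Opseq} for $p=1$ coincides with what one gets by dualizing the short exact sequence \eqref{eq:wiensDer} and reinterpreting the resulting $\Ext$-terms via the change-of-rings isomorphism \eqref{eq:shift}. First I would recall that, by \cite[Prop.~3.5]{DS10}, the modules $D^0_\bullet(\A)$ and $\Omega^\bullet_0(\A)$ are mutually $R$-dual, so that dualizing the derivation sequence \eqref{eq:wiensDer} should produce a sequence involving $\Omega^1_0(\A)$, $\Omega^1_0(\A')$, and the relevant term for $\A''$. Applying $\Hom_R(-,R)$ to \eqref{eq:wiensDer} gives a long exact $\Ext$-sequence; the point is that this long sequence collapses to the short exact sequence \eqref{eq:Opseq} thanks to the vanishing established in the proof of Lemma~\ref{lem:Opseq}.

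The key steps, in order, are as follows. I would apply $\Hom_R(-,R)$ to \eqref{eq:wiensDer} and write down the associated long exact sequence, tracking the connecting maps. The dual of the map $\alpha_H$ is again multiplication by $\alpha_H$ (up to the appropriate twist $(-1)$), so the outer terms $\Hom_R(D^0(\A'),R)$ and $\Hom_R(D^0(\A),R)$ become $\Omega^1_0(\A')$ and $\Omega^1_0(\A)(-1)$ by self-duality. The middle term $\Hom_R(D^0(\A''),R)$ is precisely where \eqref{eq:shift} enters: since $\A''$ lives on the hyperplane $H$ and $D^0(\A'')$ is an $R''=R/R\alpha_H$-module, Lemma~\ref{lem:shift} with $q=0$ identifies $\Ext^1_R(D^0(\A''),R)\cong\Hom_{R''}(D^0(\A''),R'')(1)$, which is $\Omega^1_0(\A'')$ by the mutual duality over $R''$. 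Finally, exactness on the right of \eqref{eq:Opseq} follows because $\Ext^1_R(D^0(\A),R)=0$, as was already observed in the proof of Lemma~\ref{lem:Opseq} using that $D^0(\A)$ is $(\ell-2)$-spherical with $\ell-2>1$; this vanishing kills the obstruction in the long exact sequence and produces exactly the short exact sequence claimed.

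The main obstacle I anticipate is bookkeeping rather than any deep difficulty: one must be careful that the degree twists match (the $(-1)$ shift in \eqref{eq:Opseq} versus the twists arising from the graded version of \eqref{eq:shift} and from the duality identification $D^0(\A)=\Omega_0^{\ell-2}(\A)(\ell-n)$ of \cite[Prop.~3.7.(3)]{DS10}), and that the maps induced on duals genuinely agree with multiplication by $\alpha_H$ and restriction to $H$. Since the statement of Lemma~\ref{lem:mapseq} is essentially a compatibility assertion—identifying the concretely constructed sequence \eqref{eq:Opseq} with the abstractly dualized one—the proof amounts to checking that the two constructions define the same maps, which reduces to naturality of the duality pairing and of the change-of-rings isomorphism. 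I would therefore phrase the proof as an application of $\Hom_R(-,R)$ to \eqref{eq:wiensDer}, invoke \eqref{eq:shift} and the mutual duality, and cite the vanishing $\Ext^1_R(D^0(\A),R)=0$ to conclude exactness, leaving the routine twist-matching to the reader.
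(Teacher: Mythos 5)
Your proposal reconstructs the surrounding argument (essentially the paper's proof of Lemma~\ref{lem:Opseq} in the case $p=1$) but leaves out the actual content of Lemma~\ref{lem:mapseq}, which is not an exactness statement at all: it is the assertion that the maps obtained by dualizing \eqref{eq:wiensDer} and applying \eqref{eq:shift} are the \emph{same} maps as those in \eqref{eq:Opseq}, namely multiplication by $\alpha_H$ and the geometric restriction of forms coming from \eqref{eq:Omseq}. For the first map this is indeed formal (the dual of $\cdot\,\alpha_H$ is $\cdot\,\alpha_H$), but for the second it is not: you must show that the connecting homomorphism
\[
\Hom_S(D^0(\A'),S)\to\Ext^1_S(D^0(\A''),S)\cong\Hom_{S''}(D^0(\A''),S''),
\]
composed with the duality identifications $\Hom_S(D^0(\A'),S)\cong\Omega^1_0(\A')$ and $\Hom_{S''}(D^0(\A''),S'')\cong\Omega^1_0(\A'')$, coincides with the restriction map $\Omega^1_0(\A')\to\Omega^1_0(\A'')$. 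There is no naturality statement that does this for you --- one side is a homologically defined connecting map, the other a geometrically defined restriction --- and dismissing this as ``routine twist-matching'' is precisely the gap. Without the identification, your argument only produces \emph{some} short exact sequence involving the same three modules; it does not show that the specific sequence \eqref{eq:Opseq} is exact, which is the whole reason the paper needs this lemma.

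The paper's proof is devoted entirely to that identification and has real content: (i) both candidate maps have reflexive, hence normal, source and target, and they agree off $\A''$, so it suffices to compare them locally at a generic point of $\A''$; (ii) there, genericity of $\A$ makes the germ a product of a Boolean rank-$2$ arrangement with affine space, so $D(\A)_x$, $D(\A')_x$, $D(\A'')_x$ are free and \eqref{eq:Dseq} becomes a free resolution of $D(\A'')_x$; (iii) an explicit double-complex computation then shows the connecting homomorphism is induced by the identity, i.e.\ equals the natural restriction. None of these steps appears in your proposal. Note also that your appeal to the vanishing ``established in the proof of Lemma~\ref{lem:Opseq}'' inverts the paper's logic: that proof invokes Lemma~\ref{lem:mapseq}, so the exactness you derive belongs there, not here. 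Using the vanishing $\Ext^1_S(D^0(\A),S)=0$ is harmless (it is obtained before Lemma~\ref{lem:mapseq} is cited), but it cannot substitute for the comparison of maps.
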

\begin{proof}
Ignoring degrees and dropping $0$-indices, we need to show that the
restriction map $\Omega^1(\A')\to\Omega^1(\A'')$ from \eqref{eq:Omseq}
coincides with the composition of the connecting homomorphism 
\[
\Hom_S(D(\A'),S)\to\Ext^1_{S}(D(\A''),S)\cong\Hom_{S''}(D(\A''),S''),
\]
obtained from dualizing \eqref{eq:Dseq}, 
with the isomorphism
\[
\Ext^1_{S}(D(\A''),S)\cong\Hom_{S''}(D(\A''),S''),
\]
obtained from \eqref{eq:shift} with $M=D(\A'')$ and $q=0$. 

The claim
is trivially true outside the arrangement $\calA''$.
The source and target of these two maps are naturally isomorphic and
(reflexive, hence,)  normal. 
Therefore it suffices to prove the statement locally at a generic
point of $\A''$.  

The arrangement $\A$ is generic; thus, in a generic point of
$\calA''$, $\calA$ is the
product of a Boolean $2$-arrangement with $\CC^{\ell-2}$. In
particular,  $D(\A'')_x$, $D(\A')_x$ and $D(\A)_x$ are free in such a
point.  Then \eqref{eq:Dseq} is a free resolution of $D(\A'')_x$ and
the claim becomes an exercise in homological algebra.

Abbreviate $A=D(\A')_x$, $B=D(\A)_x$, and $C=D(\A'')_x$; then the
complex \eqref{eq:Dseq} becomes
\[
\xymat{
0\ar[r] & A\ar[r]^-{\alpha} & B\ar[r] & C\ar[r] & 0,
}
\]
and it has a free resolution
\[
\xymat{
0\ar[r] & A\ar[r]^-{(\alpha,0)} & \alpha A\oplus B\ar[r]^-{(0,\id_B)} & B\ar[r] & 0\\
0\ar[r] & 0\ar[u]\ar[r] & A\ar[u]_-{(\alpha,-\alpha)}\ar[r]^-= & A\ar[r]\ar[u]_-{-\alpha} & 0\\
& 0\ar[u] & 0\ar[u] & 0\ar[u].
}
\]
After dualizing, the connecting homomorphism $A^\vee\to A^\vee/\alpha^\vee B^\vee$ shows up in the $E^2$-page of the spectral sequence of the double complex.
It is induced by $(\alpha^{-1}\circ\alpha)^\vee=\id_{A^\vee}$, so it coincides with the natural restriction map.
\end{proof}

Since our calculation does not depend on the choice of equations for the generic
arrangement, let $\calA_{n,\ell}$ denote a generic arrangement of $n$ hyperplanes with
rank $\ell$.  

The base case of our induction argument will be the following.

\begin{lem}\label{lem:rank3}
For $\calA=\calA_{n,3}$ with $n>3$, $\Ext^1_S(\Omega^1_0(\calA),S)$ is an Artinian module with Hilbert series
\[
q_n(t)=\big((n-3)t^{-1}+(1-n)+(n-1)t^{n-3}+(3-n)t^{n-2}\big)/(1-t)^3.
\]
\end{lem}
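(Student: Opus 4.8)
The plan is to compute the Hilbert series of $\Ext^1_S(\Omega^1_0(\calA),S)$ by first showing it has finite length, so that its Hilbert series is a Laurent polynomial, and then identifying that module, up to a degree shift, with the local cohomology $H^0_\m(S/J_\calA)$: the embedded component of the Jacobian scheme $Z$ supported at the origin. Since $\calA=\calA_{n,3}$ is generic, Ziegler \cite[Cor.~7.7]{Zie89} gives $\pd_S\Omega^1_0(\calA)=1$, so $\Omega^1_0(\calA)$ is $1$-spherical with zero-dimensional non-free locus. Then, by Proposition~\ref{prop:spherical} and the double-dual spectral sequence of Remark~\ref{rem-Ext-ss}, the only nonvanishing higher $\Ext$ is $\Ext^1_S(\Omega^1_0(\calA),S)$, and it has finite length.

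Next I would set up a chain of dualities to relate this to the Jacobian ideal. The equality case of \eqref{pd-ineq-zerodim} (with $M=\Omega^1_0(\calA)$, $d=1$, $\ell=3$) shows that $\Ext^1_S(\Omega^1_0(\calA),S)$ is the graded Matlis dual, up to shift, of $\Ext^1_S(D^0(\calA),S)$, where $D^0(\calA)=(\Omega^1_0(\calA))^\vee$. Dualizing the presentation \eqref{eq:logflogY}/\eqref{eq:split}, which exhibits $D^0(\calA)$ as $\Syz J_\calA$ shifted by $n-1$, identifies $\Ext^1_S(D^0(\calA),S)$ with $\Ext^3_S(S/J_\calA,S)$; this is the graded form of \eqref{eq:Ext-shift}. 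Finally, graded local duality identifies $\Ext^3_S(S/J_\calA,S)$ with the graded Matlis dual of $H^0_\m(S/J_\calA)$. Composing the two Matlis dualities, which cancel, yields $\Ext^1_S(\Omega^1_0(\calA),S)\cong H^0_\m(S/J_\calA)(n)$, the shift $n$ being fixed by the bookkeeping of the three intermediate shifts.

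It then remains to compute $H(H^0_\m(S/J_\calA),t)=H(S/J_\calA,t)-H(S/J_\calA^{\mathrm{sat}},t)$. For the first term I would determine $H(D^0(\calA),t)$ by the deletion--restriction recursion from \eqref{eq:wiensDer}, namely $H(D^0(\calA_{n,3}))=t\,H(D^0(\calA_{n-1,3}))+H(D^0(\calA_{n-1,2}))$, whose restriction term is the free rank-$1$ module of the rank-$2$ arrangement $\calA_{n-1,2}$ (exponents $1,n-2$); solving with the Boolean base case $\calA_{3,3}$ gives $H(D^0(\calA_{n,3}),t)=\big((n-1)t^{n-2}-(n-3)t^{n-1}\big)/(1-t)^3$, and substituting into the presentation \eqref{eq:logflogY} produces $H(S/J_\calA,t)$. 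For the saturation, $J_\calA^{\mathrm{sat}}$ is the ideal of the reduced union of the $\binom{n}{2}$ singular lines, i.e. the star configuration cut out by the products $f/\alpha_k=\prod_{i\ne k}\alpha_i$; its Hilbert--Burch resolution $0\to S(-n)^{n-1}\to S(-(n-1))^{n}\to J_\calA^{\mathrm{sat}}\to 0$ gives $H(S/J_\calA^{\mathrm{sat}},t)=\big(1-nt^{n-1}+(n-1)t^{n}\big)/(1-t)^3$. Subtracting these two series and twisting by $n$ yields exactly $q_n(t)$.

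The main obstacle is keeping the three degree shifts consistent---the syzygy shift by $n-1$, the local-duality shift, and the two Matlis reversals---so that the final twist comes out to be exactly $n$; a sign or shift error here is the easiest way to miss $q_n$. The second nontrivial point is the geometric input that $J_\calA^{\mathrm{sat}}$ is the star configuration ideal: this requires checking that $J_\calA$ is reduced of multiplicity one along each singular line, so that the top-dimensional part of $Z$ is the reduced union of the $\binom{n}{2}$ lines, and that the $n$ forms $f/\alpha_k$ generate this saturation with exactly $n-1$ independent linear syzygies $\alpha_j(f/\alpha_j)-\alpha_k(f/\alpha_k)$. With these in hand the remaining steps are routine; I would also record the base case and verify that the recursion built on \eqref{eq:wiensDer} is valid for all $n\ge 4$.
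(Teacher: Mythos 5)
Your proposal is correct, but it takes a genuinely different route from the paper. The paper's proof is a short Euler-characteristic computation: dualize Ziegler's resolution $0\to S(-1)^{n-3}\to S^{n-1}\to\Omega^1_0(\calA)\to 0$, note that the resulting four-term exact sequence starts with $D^0(\calA)=(\Omega^1_0(\calA))^\vee$, read off $h(D^0(\calA),t)$ from Yuzvinsky's resolution, and take alternating sums of Hilbert series. You instead identify $\Ext^1_S(\Omega^1_0(\calA),S)\cong H^0_\m(S/J_\calA)(n)$ by composing the equality case of \eqref{pd-ineq-zerodim}, the graded form of \eqref{eq:Ext-shift}, and graded local duality, and then compute $H^0_\m(S/J_\calA)$ as the difference between $S/J_\calA$ (obtained from the Wiens recursion \eqref{eq:wiensDer} and the four-term sequence \eqref{eq:logflogY}) and $S/J_\calA^{\mathrm{sat}}$, where the saturation is the star-configuration ideal of the $\binom{n}{2}$ singular lines with its Hilbert--Burch resolution. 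I checked your numbers: in the grading you use (where your $h(D^0)$ is the paper's Yuzvinsky series multiplied by $t$, reflecting the $\deg\partial_i=0$ convention implicit in your ``shift by $n-1$''), one gets $h(S/J_\calA,t)=\bigl(1-3t^{n-1}+(n-1)t^{2n-3}-(n-3)t^{2n-2}\bigr)/(1-t)^3$; subtracting $h(S/J_\calA^{\mathrm{sat}},t)=\bigl(1-nt^{n-1}+(n-1)t^{n}\bigr)/(1-t)^3$ gives exactly $t^{n}q_n(t)$, and the composite of your three duality shifts is indeed $n$, so your route lands on $q_n$. What your approach buys is a geometric interpretation the paper's computation hides: $\Ext^1_S(\Omega^1_0(\calA),S)$ is precisely the $\m$-primary (embedded) part $J_\calA^{\mathrm{sat}}/J_\calA$ of the Jacobian scheme, which ties this lemma directly to the theme of Section~\ref{sec-wakefield}, and it re-derives the needed Hilbert series from deletion--restriction rather than quoting Yuzvinsky. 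The cost is heavier machinery --- graded versions of the double-dual spectral sequence and of local duality, plus the standard but external facts that the saturation equals the star-configuration ideal generated by the $f/\alpha_k$ with $n-1$ linear syzygies --- and the shift bookkeeping, which your write-up asserts rather than carries out; all of these gaps are fillable, but they make your argument considerably longer than the paper's.
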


\begin{proof}
By \cite[Cor.~7.7]{Zie89}, one has a free resolution
\begin{equation}\label{eq:resO}
\xymat{
0\ar[r] & S(-1)^{n-3}\ar[r] & S^{n-1}\ar[r] & \Omega_0^1(\calA)\ar[r] & 0.
}
\end{equation}
On the other hand, following \cite{Yu91}, the module of derivations for the generic arrangement has a graded free resolution
\[
\xymat{
0\ar[r] & S(2-n)^{n-3}\ar[r] & S(3-n)^{n-1}\ar[r] &
D^0(\calA)\ar[r] & 0,
}
\]
so $h(D^0(\A),t)=((n-1)t^{n-3}-(n-3)t^{n-2})/(1-t)^3$.
Now we dualize \eqref{eq:resO} to obtain
\[
\xymat{
0 & \Ext^1_S(\Omega_0^1(\calA),S)\ar[l] & S^{n-3}(1)\ar[l] & S^{n-1}\ar[l] & D^0(\calA)\ar[l] & 0,\ar[l]
}
\]
and take the Euler characteristic to compute the Hilbert series of $\Ext^1_S(\Omega_0^1(\calA),S)$.
\end{proof}

\begin{rmk}
The previous result can be expressed more concisely in terms of a 
generating function.  One can calculate that
\begin{eqnarray*}
\sum_{n\geq4}q_n(t)s^n &=& \frac1{(1-t)^3}\sum_{n\geq4}
\big((n-3)t^{-1}+(1-n)+(n-1)t^{n-3}+(3-n)t^{n-2}\big)s^n\\
&=&\frac1{(1-t)^3}\Big(
\frac{s^4t^{-1}}{(1-s)^2}+\frac{(2s-3)s^4}{(1-s)^2}+
\frac{(3-2st)(s^4t)}{(1-st)^2}-\frac{s^4t^2}{(1-st)^2}\Big)\\
&=&t^{-1}\frac{s^4}{(1-s)^2(1-st)^2}.
\end{eqnarray*}

In particular, by setting $t=1$ in the expression above, we obtain
$s^4/(1-s)^4$, and we find that $\Ext^1_S(\Omega^1_0(\calA),S)$ is a
module of length equal to $n-1\choose 3$.
\end{rmk}

\begin{proof}[Proof of Theorem~\ref{th:generic}]
Suppose that the conclusion is known for all arrangements of fewer
than $n$ hyperplanes.  We prove the claim for $\calA_{n,\ell}$ where
$n>\ell$.  We will assume $\ell\geq4$, since the rank $3$ case is
covered by Lemma~\ref{lem:rank3}.  By hypothesis, $\Omega^p_0(\calA')$
and $\Omega^p_0(\calA'')$ are $p$-spherical for $1\leq p\leq \ell-3$.
Now apply $\Ext^\bullet_R(-,R)$ to the short exact sequence \eqref{eq:Opseq}.
By Lemma~\ref{lem:shift}, we may replace $\Ext^{q+1}_R(\Omega^p_0(\A''),R)$
by $\Ext^q_{R''}(\Omega^p_0(\A''),R'')$ in the long exact sequence.
Then the $p$-spherical condition, together with the exactness of
\eqref{eq:wiensDer} for $i=0$, implies that the long exact sequence
breaks up as
\begin{equation}\label{eq:indp}
\xymat@C=15pt{
0\ar[r] & \Ext^i_S(\Omega_0^p(\calA'),S)(-1)\ar[r] &
\Ext^i_S(\Omega_0^p(\calA),S)\ar[r] &
\Ext^i_{S''}(\Omega_0^p(\calA''),S'')\ar[r] & 0,
}
\end{equation}
for all $i\geq0$ and for $0\leq p\leq \ell-3$.

On the other hand, for $p=\ell-2$, note $p\geq2$.  Consider the long exact sequence obtained from the dual of \eqref{eq:wiensOmega}. 
By Lemma~\ref{lem:mapseq}, it begins with the sequence \eqref{eq:Opseq}, which is exact from Lemma~\ref{lem:Opseq}.
It follows that $\Ext^1_S(\Omega^{\ell-2}_0(\calA),S)=0$.

For $i\geq2$, the assumption that $\Omega^{\ell-2}_0(\calA')$ and $\Omega^{\ell-3}_0(\calA'')$ are $(\ell-2)$-and $(\ell-3)$-spherical, respectively, gives a short exact sequence
\begin{equation}\label{eq:indptop}
\xymat@C=10pt{
0\ar[r] & \Ext^{i-1}_{S''}(\Omega_0^{\ell-3}(\calA''),S'')(1)\ar[r] &
\Ext^i_S(\Omega_0^{\ell-2}(\calA),S)\ar[r] &
\Ext^i_{S}(\Omega_0^{\ell-2}(\calA'),S)\ar[r] & 0.
}
\end{equation}

It follows that $\Omega^p_0(\calA)$ is $p$-spherical for $1\leq p\leq \ell-3$ by 
\eqref{eq:indp}, and for $p=\ell-2$ by \eqref{eq:indptop}.  In either 
case, the length of $\Ext^p_S(\Omega^p_0(\calA),S)$ can be computed using
the induction hypothesis and \eqref{eq:indp} or \eqref{eq:indptop} to be 
\[
{n-2\choose\ell}+{n-2\choose\ell-1} = {n-1\choose\ell}.
\]
\end{proof}

If one repeats the argument, it is possible to obtain the Hilbert
series of each $\Ext^p_S(\Omega_0^p(\calA),S)$, instead of just its
length.  Given the number of parameters involved, we present this
refinement to Theorem~\ref{th:generic} separately.

\begin{dfn}
Let $q(\ell,n,p;t)$ denote the Hilbert series of 
$\Ext_S^p(\Omega^p_0(\calA_{n,\ell}),S)$, for $3\leq \ell\leq n$ and
$1\leq p\leq \ell-2$.  Let $Q(\ell,p;s,t)$ be the generating function
\[
Q(\ell,p;s,t)=\sum_{n\geq3} q(\ell,n,p;t) s^n,
\]
and write $Q(\ell,p)$ for short.
\end{dfn}

We saw, above, that 
\begin{equation}\label{eq:Q31}
Q(3,1)=s^4/\big(t(1-s)^2(1-st)^2\big).
\end{equation}

\begin{lem}
The generating functions $Q(\ell,p)$ satisfy
\[
Q(\ell,p)=\begin{cases}
\frac{s}{1-st}Q(\ell-1,p)&\text{ for $1\leq p\leq \ell-3$;}\\
\frac{s}{t(1-s)} Q(\ell-1,\ell-3) & \text{ for $p=\ell-2$}.
\end{cases}
\]
\end{lem}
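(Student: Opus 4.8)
The plan is to derive these recursions by tracking Hilbert series through the short exact sequences that already appear in the proof of Theorem~\ref{th:generic}. The generating functions $Q(\ell,p)$ package the Hilbert series $q(\ell,n,p;t)$, so the deletion--restriction sequences translate directly into relations between $Q(\ell,p)$, $Q(\ell-1,p)$, and $Q(\ell-1,\ell-3)$ once we account for the grading shifts. First I would treat the generic case $1\leq p\leq\ell-3$. The key input is the split short exact sequence \eqref{eq:indp}, which holds in each internal degree. Taking Hilbert series of \eqref{eq:indp} gives
\[
q(\ell,n,p;t)=t\cdot q(\ell,n-1,p;t)+q(\ell-1,n-1,p;t),
\]
since the restriction term $\Ext^p_{S''}(\Omega_0^p(\calA''),S'')$ lives over $S''=S/S\alpha_H$, whose rank-$(\ell-1)$ arrangement $\calA''$ has $n-1$ hyperplanes, and the submodule $\Ext^p_S(\Omega_0^p(\calA'),S)(-1)$ contributes the factor $t$ from the twist by $-1$.

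Multiplying by $s^n$ and summing over $n$ converts this into the stated functional equation. The twist-and-delete term contributes $st\,Q(\ell,p)$, while the restriction term contributes $s\,Q(\ell-1,p)$; solving
\[
Q(\ell,p)=st\,Q(\ell,p)+s\,Q(\ell-1,p)
\]
yields $Q(\ell,p)=\frac{s}{1-st}Q(\ell-1,p)$, as claimed. The boundary terms of the summation vanish because $q(\ell,n,p;t)=0$ when $n<\ell$ (the arrangement cannot have rank $\ell$), so no correction from small $n$ survives; I would note this to justify passing cleanly from the pointwise recursion to the generating-function identity. The case $p=\ell-2$ is handled analogously, but using the short exact sequence \eqref{eq:indptop} in place of \eqref{eq:indp}. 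Here the restriction contribution comes from $\Ext^{i-1}_{S''}(\Omega_0^{\ell-3}(\calA''),S'')(1)$, so the relevant lower-rank function is $Q(\ell-1,\ell-3)$, the twist by $(1)$ introduces a factor $t^{-1}$, and the submodule term $\Ext^i_S(\Omega_0^{\ell-2}(\calA'),S)$ now appears with no twist, producing a factor $s$ rather than $st$. This gives $Q(\ell,\ell-2)=s\,Q(\ell,\ell-2)+\frac{s}{t}\,Q(\ell-1,\ell-3)$, hence $Q(\ell,\ell-2)=\frac{s}{t(1-s)}Q(\ell-1,\ell-3)$.

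The main obstacle I anticipate is bookkeeping rather than conceptual: one must keep straight the direction and magnitude of every grading shift, since the two sequences twist in opposite directions and the hyperplane count drops by one upon restriction. I would take care that the exactness hypotheses needed for \eqref{eq:indp} and \eqref{eq:indptop} are exactly those established in the proof of Theorem~\ref{th:generic}, so that the Hilbert series are additive across the sequences without hidden correction terms. A secondary point worth verifying is that the indexing in \eqref{eq:indptop} (the shift from $i$ to $i-1$ on the restriction side) does not disturb the Hilbert-series computation, since we extract only the diagonal $i=p=\ell-2$ where the sphericality of $\Omega_0^{\ell-2}(\calA)$ concentrates the relevant $\Ext$, and the corresponding restriction index $i-1=\ell-3$ is precisely where $\Omega_0^{\ell-3}(\calA'')$ is $(\ell-3)$-spherical.
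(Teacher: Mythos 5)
Your proposal is correct and is essentially the paper's own proof: take Hilbert series of the short exact sequences \eqref{eq:indp} (at $i=p$) and \eqref{eq:indptop} (at $i=\ell-2$) to obtain the pointwise recursions $q(\ell,n,p;t)=t\,q(\ell,n-1,p;t)+q(\ell-1,n-1,p;t)$ and its analogue, then multiply by $s^n$ and sum over $n$. Your added bookkeeping (the twist factors $t$ and $t^{-1}$, vanishing of boundary terms for small $n$) is sound; the only slip is calling the deletion term $\Ext^i_S(\Omega_0^{\ell-2}(\calA'),S)$ in \eqref{eq:indptop} the submodule, when it is in fact the quotient there---which is immaterial, since Hilbert series are additive on short exact sequences regardless of position.
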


\begin{proof}
The short exact sequence \eqref{eq:indp}, for $i=p$, gives
\[
q(\ell,n,p;t)= tq(\ell,n-1,p;t)+q(\ell-1,n-1,p;t)
\]
as long as $1\leq p\leq \ell-3$.  Writing the sum of both sides gives the equation
$Q(\ell,p;s,t)=st Q(\ell,p;s,t)+sQ(\ell-1,p;s,t)$, from which we
obtain the formula above.  The case $p=\ell-2$ is similar, using
\eqref{eq:indptop} instead.
\end{proof}

By applying this result recursively, beginning with \eqref{eq:Q31}, we obtain the expression:

\begin{lem}\label{lem:Qseries}
For all $\ell\geq 3$ and $1\leq p\leq \ell-2$,
\begin{equation}\label{eq:Qseries}
Q(\ell,p;s,t)=t^{-p}\frac{s^{\ell+1}}{(1-s)^{p+1}(1-st)^{\ell-p}}.
\end{equation}
\end{lem}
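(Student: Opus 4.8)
The plan is to prove the closed form \eqref{eq:Qseries} by induction on $\ell$, using the recursion established in the preceding lemma as the engine and \eqref{eq:Q31} as the base case. First I would verify that \eqref{eq:Qseries} matches \eqref{eq:Q31} when $\ell=3$ and $p=1$: substituting these values into the proposed formula gives $t^{-1}s^4/\big((1-s)^2(1-st)^2\big)$, which is exactly $Q(3,1)$, so the base case holds.

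For the inductive step, I would assume \eqref{eq:Qseries} holds for $\ell-1$ and all admissible $p$, and treat the two cases of the recursion separately. In the range $1\le p\le\ell-3$, the recursion reads $Q(\ell,p)=\frac{s}{1-st}Q(\ell-1,p)$. Substituting the inductive hypothesis
\[
Q(\ell-1,p)=t^{-p}\frac{s^{\ell}}{(1-s)^{p+1}(1-st)^{\ell-1-p}}
\]
and multiplying by $\frac{s}{1-st}$ produces $t^{-p}\frac{s^{\ell+1}}{(1-s)^{p+1}(1-st)^{\ell-p}}$, which is precisely \eqref{eq:Qseries}; the exponent of $s$ increases by one, the exponent of $(1-st)$ increases by one, and the remaining factors are unchanged, so the bookkeeping closes immediately.

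The second case $p=\ell-2$ uses the other branch $Q(\ell,\ell-2)=\frac{s}{t(1-s)}Q(\ell-1,\ell-3)$. Here I would apply the inductive hypothesis at $(\ell-1,\ell-3)$, which is admissible since $\ell-3=(\ell-1)-2$ lies at the top of the allowed range, giving
\[
Q(\ell-1,\ell-3)=t^{-(\ell-3)}\frac{s^{\ell}}{(1-s)^{\ell-2}(1-st)^2}.
\]
Multiplying by $\frac{s}{t(1-s)}$ yields $t^{-(\ell-2)}\frac{s^{\ell+1}}{(1-s)^{\ell-1}(1-st)^2}$, and checking against \eqref{eq:Qseries} at $p=\ell-2$ — where $p+1=\ell-1$ and $\ell-p=2$ — confirms agreement. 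I expect no genuine obstacle in this argument, since it is a direct verification that the proposed generating function is a fixed point of the recursion; the only point demanding care is confirming that the index $p=\ell-3$ fed into the first branch's hypothesis is itself in the valid range for rank $\ell-1$, so that the induction never references an undefined value. Once both cases are checked, the formula follows for all $\ell\ge3$ and $1\le p\le\ell-2$ by induction.
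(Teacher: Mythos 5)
Your proof is correct and takes essentially the same route as the paper, which likewise obtains \eqref{eq:Qseries} by applying the recursion of the preceding lemma iteratively starting from \eqref{eq:Q31}; you have simply written out the induction (including the range check that $p=\ell-3$ is admissible at rank $\ell-1$) that the paper leaves implicit.
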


Continuing further, let
\[
P(p;s,t,u)=\sum_{\ell\geq p+2} Q(\ell,p;s,t)u^\ell.
\]
Then the geometric series formula, using \eqref{eq:Qseries}, simplifies to the following:
\begin{equation}\label{eq:biggenfcn}
P(p;s,t,u)=\frac{s^3 u^2}{(1-s)(1-st)(1-s(t+u))}
\Big(\frac{su}{t(1-s)}\Big)^p,
\end{equation}
for $p\geq 1$.  Finally, we can form a four-variable generating function
\[
T(s,t,u,v)=\sum_{p\geq1} P(p;s,t,u)v^p.
\]
By construction,
\[
T(s,t,u,v)=\sum_{n,\ell,p} 
h(\Ext_S^p(\Omega_0^p(\calA_{n,\ell}),S),t) s^n u^\ell v^p.
\]
Since each module is Artinian, $T\in\Z[t,t^{-1}][[s,u,v]]$ (and, in particular,
our rational function calculations do lead to a valid expression for a 
formal power series).
By summing the expression \eqref{eq:biggenfcn}, we obtain the following:
\begin{cor}
The Hilbert series of the module $\Ext_S^p(\Omega_0^p(\calA_{n,\ell}),S)$ is
the coefficient of $s^nu^\ell v^p$ in
\[
T(s,t,u,v)=\frac{s^4u^3v}{(1-s)(1-st)(1-st-su)(t-st-suv)},
\]
for all $1\leq p\leq\ell-2\leq n-2$.
\end{cor}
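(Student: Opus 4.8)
The plan is to read off $T(s,t,u,v)$ by a single geometric-series summation, since by the time we reach this statement the entire content has been packaged into the closed form for $P(p;s,t,u)$ recorded in \eqref{eq:biggenfcn}. By definition $T(s,t,u,v)=\sum_{p\geq1}P(p;s,t,u)v^p$, so everything reduces to summing the power series in the formal variable $v$ whose coefficients are given by \eqref{eq:biggenfcn}.

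First I would factor \eqref{eq:biggenfcn} into a $p$-independent prefactor times a pure $p$-th power. Using $1-s(t+u)=1-st-su$, write
\[
P(p;s,t,u)=A\cdot r^p,\qquad A=\frac{s^3u^2}{(1-s)(1-st)(1-st-su)},\quad r=\frac{su}{t(1-s)}.
\]
Then the summation is immediate:
\[
T(s,t,u,v)=\sum_{p\geq1}A\,(rv)^p=A\cdot\frac{rv}{1-rv}.
\]
Substituting $rv=\frac{suv}{t(1-s)}$ and clearing denominators, one has $1-rv=\frac{t-st-suv}{t(1-s)}$, while $A\cdot rv=\frac{s^4u^3v}{t(1-s)^2(1-st)(1-st-su)}$. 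Forming the quotient, the common factor $t(1-s)$ cancels and I obtain
\[
T(s,t,u,v)=\frac{s^4u^3v}{(1-s)(1-st)(1-st-su)(t-st-suv)},
\]
which is the asserted formula.

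To finish I would record that $T$, by its construction as the triple sum, has $h(\Ext_S^p(\Omega_0^p(\calA_{n,\ell}),S),t)$ as the coefficient of $s^nu^\ell v^p$, with the ranges $1\leq p\leq\ell-2\leq n-2$ inherited from the domains on which $q(\ell,n,p;t)$, $Q(\ell,p)$, and $P(p;s,t,u)$ were defined; since Theorem~\ref{th:generic} shows each such module is Artinian, we have $T\in\Z[t,t^{-1}][[s,u,v]]$, so the manipulation above is a legitimate identity of formal power series rather than merely of rational functions. There is no real obstacle here: the mathematical work lives entirely in Lemma~\ref{lem:Qseries} and the derivation of \eqref{eq:biggenfcn}, and the only point demanding care is the bookkeeping in the final cancellation and the verification that the index ranges compose correctly under the nested summations.
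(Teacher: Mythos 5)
Your proposal is correct and is essentially identical to the paper's own argument: the paper likewise obtains $T$ by summing the geometric series $\sum_{p\geq1}P(p;s,t,u)v^p$ using the closed form \eqref{eq:biggenfcn}, with the same cancellation of the factor $t(1-s)$ and the same appeal to Artinian-ness (via Theorem~\ref{th:generic}) to justify working in $\Z[t,t^{-1}][[s,u,v]]$. Your write-up merely makes explicit the one-line summation the paper leaves to the reader.
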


\subsection*{Acknowledgments}

We would like to express our gratitude to the American Institute of
Mathematics in Palo Alto for its support and hospitality during our
three SQuaRE meetings, out of which this work grew.


\bibliographystyle{amsplain}

\newcommand{\arxiv}[1]
{\texttt{\href{http://arxiv.org/abs/#1}{arXiv:#1}}}

\renewcommand{\MR}[1]
{\href{http://www.ams.org/mathscinet-getitem?mr=#1}{MR#1}}

\newcommand{\MRh}[2]
{\href{http://www.ams.org/mathscinet-getitem?mr=#1}{MR#1 (#2)}}


\providecommand{\bysame}{\leavevmode\hbox to3em{\hrulefill}\thinspace}
\providecommand{\MRhref}[2]{%
  \href{http://www.ams.org/mathscinet-getitem?mr=#1}{#2}
}
\providecommand{\href}[2]{#2}


\end{document}